\theoremstyle{plain}
\newtheorem{thm}{\protect\theoremname}[section]
  \theoremstyle{definition}
  \newtheorem{defn}[thm]{\protect\definitionname}
  \theoremstyle{remark}
  \newtheorem{rem}[thm]{\protect\remarkname}
  \theoremstyle{plain}
  \newtheorem{prop}[thm]{\protect\propositionname}
  \theoremstyle{plain}
  \newtheorem{cor}[thm]{\protect\corollaryname}
  \theoremstyle{plain}
  \newtheorem{lem}[thm]{\protect\lemmaname}
  \theoremstyle{definition}
  \newtheorem{example}[thm]{\protect\examplename}
  \providecommand{\corollaryname}{Corollary}
  \providecommand{\definitionname}{Definition}
  \providecommand{\examplename}{Example}
  \providecommand{\lemmaname}{Lemma}
  \providecommand{\propositionname}{Proposition}
  \providecommand{\remarkname}{Remark}
\providecommand{\theoremname}{Theorem}
\begin{document}
\date{}

\title{Interval projections of self-similar sets}

\author{\'Abel Farkas}
\maketitle
\begin{abstract}
We show that if $K$ is a self-similar $1$-set that is not contained
in a line and either satisfies the strong separation condition or
is defined via homotheties then there are at most finitely many lines
through the origin such that the projection of $K$ onto them is an
interval.
\end{abstract}

\section{Introduction}

\subsection{Overview}

The measure theory of projections of `fractals' has gained much attention
in the past few decades. A seminal result is that for a Borel set
$K\subseteq\mathbb{R}^{2}$ if $\dim_{H}(K)>1$ then
\begin{equation}
\mathcal{H}^{1}(\Pi_{M}(K))>0\label{eq:marstrand pos measuree}
\end{equation}
for almost all lines $M$, where $\dim_{H}$ denotes the Hausdorff
dimension, $\mathcal{H}^{s}$ denotes the $s$-dimensional Hausdorff
measure and $\Pi_{M}:\mathbb{R}^{2}\longrightarrow M$ denotes the
orthogonal projection onto $M$. This was proved by Marstrand \cite{Marstrand_paper},
and was generalized to higher dimensions by Mattila \cite{Mattila-H-dim of projections of sets}.
When $\dim_{H}(K)<1$ then $\mathcal{H}^{1}(\Pi_{M}(K))=0$ for every
line $M$ since projection does not increase the Hausdorff dimension.
In the critical case when $\dim_{H}(K)=1$ two things can happen.
A set $K\subseteq\mathbb{R}^{d}$ is called an \textit{$s$-set} if
$0<\mathcal{H}^{s}(K)<\infty$. We call a $1$-set $K$ \textsl{purely
$1$-unrectifiable} if $\mathcal{H}^{1}(K\cap M)=0$ for every differentiable
$1$-manifold $M$. It was shown by Besicovitch \cite{Besicovitch-Proj THM}
and generalised to higher dimensions by Federer \cite{Federer-Proj THM}
that for a $1$-set $K\subseteq\mathbb{R}^{2}$
\[
\mathcal{H}^{1}(\Pi_{M}(K))=0
\]
for almost all lines $M$ if and only if $K$ is purely $1$-unrectifiable.
If $K$ is not purely $1$-unrectifiable then $\mathcal{H}^{1}(\Pi_{M}(K))>0$
for all but at most one lines $M$.

When $K\subseteq\mathbb{R}$ is a dynamically defined set often the
case is that $\mathcal{H}^{1}(K)>0$ if and only if $K$ contains
an interval. For example, if $K\subseteq\mathbb{R}$ is a self-similar
set satisfying the `open set condition' then $\mathcal{H}^{1}(K)>0$
if and only if $K$ contains an interval \cite[Corollary 2.3]{Schief OSC}.
Whether the statement still holds without assuming the open set condition
is an intriguing, still open question. If $K\subseteq\mathbb{R}^{2}$
is a self-similar $1$-set satisfying the open set condition then
for a line $M$ we have that $\mathcal{H}^{1}(\Pi_{M}(K)>0$ if and
only if $\Pi_{M}(K)$ contains an interval (see \cite[Thm 1.1; Thm 1.5]{linim},
\cite{Wang GDA OSC} and \cite[Corollary 2.3]{Schief OSC}).

In this paper we are concerned about the situation when the projection
$\Pi_{M}(K)$ not only contains an interval but is an interval itself.
The unit semicircle in the plane is a $1$-set and the projection
of it onto every line is an interval. Falconer and Fraser \cite{Falconer-Fraser-visible part}
studied the visible part of self-similar sets that project onto an
interval in every direction. Our main results show that under some
natural assumptions on a self-similar $1$-set $K$ there are at most
finitely many lines $M$ such that $\Pi_{M}(K)$ is an interval in
contrast with the example of the unit semicircle which is a self-conformal
set. This is the case when $K$ satisfies the `strong separation condition'.
We also show that we can drop the assumption of the strong separation
condition if we assume that every defining map is a homothety. In
the last section we give examples of self-similar sets with several
interval projections. Finally, we establish an invariance property
of the moment of inertia of a self-similar $1$-set with several interval
projections.

\subsection{Definitions and notations}

For integers $0\leq l<d$ and for an $l$-dimensional affine subspace
$M\subseteq\mathbb{R}^{d}$ we denote the orthogonal projection onto
$M$ by $\Pi_{M}:\mathbb{R}^{d}\longrightarrow M$. Throughout the
paper we consider $\Pi_{M}$ as a mapping into $\mathbb{R}^{l}=M$.

\begin{defn}
\label{Def: Int-Proj}Let $M\subseteq\mathbb{R}^{2}$ be a line through
the origin and $K\subseteq\mathbb{R}^{2}$ be an arbitrary set. We
call the projection $\Pi_{M}:\mathbb{R}^{2}\longrightarrow M$ an
\textit{interval projection of $K$} if $\Pi_{M}(K)$ is an interval
with the $M=\mathbb{R}$ identification. For a single point $x\in\mathbb{R}$
we consider $\{x\}$ to be the closed interval $[x,x]$ of length
$0$.
\end{defn}

A \textit{self-similar iterated function system} (SS-IFS) in $\mathbb{R}^{d}$
is a finite collection of maps $\left\{ S_{i}\right\} _{i=1}^{m}$
from $\mathbb{R}^{d}$ to $\mathbb{R}^{d}$ such that all the $S_{i}$
are contracting similarities. The \textit{attractor} of the SS-IFS
is the unique nonempty compact set $K$ such that $K=\bigcup_{i=1}^{m}S_{i}(K)$.
The attractor of an SS-IFS is called a \textit{self-similar set}.
We say that the SS-IFS $\left\{ S_{i}\right\} _{i=1}^{m}$ satisfies
the \textit{strong separation condition} (SSC) if the $\left\{ S_{i}(K)\right\} _{i=1}^{m}$
are a disjoint. Every $S_{i}$ can be uniquely decomposed as
\begin{equation}
S_{i}(x)=r_{i}T_{i}(x)+v_{i}\label{eq:S_ideconposition}
\end{equation}
for all $x\in\mathbb{R}^{d}$, where $0<r_{i}<1$, $T_{i}$ is an
orthogonal transformation and $v_{i}\in\mathbb{R}^{d}$ is a translation
vector. Let $\mathcal{T}$ denote the group generated by the orthogonal
transformations $\left\{ T_{i}\right\} _{i=1}^{m}$. We denote the
set $\left\{ 1,2,\ldots,m\right\} $ by $\mathcal{I}$. Let $\boldsymbol{\mathbf{i}}=(i_{1},\ldots,i_{k})\in\mathcal{I}^{k}$
i.e. a $k$-tuple of indices. Then we write $S_{\boldsymbol{\mathbf{i}}}=S_{i_{1}}\circ\ldots\circ S_{i_{k}}$
and $K_{\boldsymbol{\mathbf{i}}}=S_{\boldsymbol{\mathbf{i}}}(K)$.
Since the similarities are decomposed as in (\ref{eq:S_ideconposition})
we write $r_{\boldsymbol{\mathbf{i}}}=r_{i_{1}}\cdot\ldots\cdot r_{i_{k}}$
and $T_{\boldsymbol{\mathbf{i}}}=T_{i_{1}}\circ\ldots\circ T_{i_{k}}$.

\subsection{Finiteness of interval projections}

In this section we list the main results of this paper. The proofs
are provided in Section \ref{sec:Isolated-interval-projections} and
Section \ref{sec:Interval-projections-of}.

We provide examples, Example \ref{Ex: Minen-vet-int}, of self-similar
sets with Hausdorff dimension arbitrarily close to $1$ such that
projection of them onto every line is an interval. However, when the
Hausdorff dimension is $1$ we prove that there are only finitely
many such lines. We also provide an example of a totally disconnected,
compact, non-self-similar set of Hausdorff dimension $1$ that projects
onto an interval in every direction (see Example \ref{1dimfullip}).
\begin{thm}
\label{thm:SS int-proj SSC}Let $\left\{ S_{i}\right\} _{i=1}^{m}$
be a self-similar iterated function system in $\mathbb{R}^{2}$ with
attractor $K$ such that $\left\{ S_{i}\right\} _{i=1}^{m}$ satisfies
the strong separation condition and $K$ is a $1$-set. Then there
are at most finitely many lines through the origin such that the orthogonal
projection onto them are interval projections of $K$.
\end{thm}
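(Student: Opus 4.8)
The plan is to parametrise the lines through the origin by an angle $\theta$, writing $\Pi_{\theta}$ for the projection onto the line $M_{\theta}$ with unit direction $u_{\theta}$, and to study the two functions $L(\theta)=\mathcal{H}^{1}(\Pi_{\theta}(K))$ and the width $D(\theta)=\mathrm{diam}(\Pi_{\theta}(K))$. Since $\Pi_{\theta}$ is $1$-Lipschitz we have $L(\theta)\le D(\theta)$, with equality precisely when $\theta$ is an interval direction, so the set $\Theta$ of interval directions equals $\{\theta:L(\theta)=D(\theta)\}$. First I would dispose of the degenerate case: if $K$ lies in a line then under the SSC it is totally disconnected, hence not an interval, and only the perpendicular direction yields a (single-point) interval projection. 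So I may assume $K$ is not contained in a line, whence $D(\theta)>0$ for all $\theta$. Because the SSC holds and $K$ is a $1$-set, the similarity dimension is $1$, giving $\sum_{i}r_{i}=1$; moreover such a $K$ is purely $1$-unrectifiable, so by the Besicovitch--Federer projection theorem quoted in the introduction $L(\theta)=0$ for Lebesgue-almost every $\theta$, i.e. $\int_{0}^{\pi}L(\theta)\,d\theta=0$.

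Next I would record the structural relation coming from self-similarity. Writing $T_{i}^{-1}u_{\theta}=u_{\sigma_{i}(\theta)}$ defines an isometry $\sigma_{i}$ of the circle of directions, and since $\Pi_{\theta}\circ S_{i}=f_{i,\theta}\circ\Pi_{\sigma_{i}(\theta)}$ with $f_{i,\theta}$ a contraction of ratio $r_{i}$, the identity $\Pi_{\theta}(K)=\bigcup_{i}(r_{i}\Pi_{\sigma_{i}(\theta)}(K)+t_{i}(\theta))$ holds, where $t_{i}(\theta)=\langle v_{i},u_{\theta}\rangle$. Subadditivity of $\mathcal{H}^{1}$ then yields the key inequality $L(\theta)\le\sum_{i}r_{i}L(\sigma_{i}(\theta))$ for every $\theta$. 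I would also use two soft regularity facts: the map $\theta\mapsto\Pi_{\theta}(K)$ is continuous in the Hausdorff metric, so $\Theta$ is closed (a Hausdorff limit of intervals is an interval); and $L$ is upper semicontinuous, since $\{(\theta,s):s\in\Pi_{\theta}(K)\}$ is closed and reverse Fatou applies. As $\mathbb{RP}^{1}$ is compact and $\Theta$ is closed, to prove finiteness it suffices to show that every point of $\Theta$ is isolated.

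The isolation is where the proof splits according to the group $\mathcal{T}$. If $\mathcal{T}$ is infinite it contains an irrational rotation, so the only stationary measure of the weighted system $(\sigma_{i},r_{i})$ on the circle is normalised Lebesgue measure $P$, and the $k$-step distributions $P_{k}^{\theta}=\sum_{|\mathbf{i}|=k}r_{\mathbf{i}}\,\delta_{\sigma_{\mathbf{i}}(\theta)}$ converge weakly to $P$. Iterating the key inequality gives $L(\theta)\le\int L\,dP_{k}^{\theta}$, and upper semicontinuity of $L$ with weak convergence yields $L(\theta)\le\int L\,dP=\tfrac{1}{\pi}\int_{0}^{\pi}L=0$; hence $\Theta=\varnothing$ in this case. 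If instead $\mathcal{T}$ is finite, every orbit $\{\sigma_{\mathbf{i}}(\theta)\}$ is finite, so the family $(\Pi_{\theta'}(K))_{\theta'}$ over the orbit is the attractor of a graph-directed self-similar system on $\mathbb{R}$ whose only free data are the translations $t_{i}(\theta')=\langle v_{i},u_{\theta'}\rangle$, which are real-analytic in $\theta$. For $\theta\in\Theta$ the equality $L=D$ together with $\sum_{i}r_{i}=1$ forces the level-$k$ projected pieces to tile the interval, and I would convert this rigidity into finitely many real-analytic equations in $\theta$; their solution set is finite unless it forces $K$ into a line, which is excluded. This gives $\Theta$ finite.

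The main obstacle I anticipate is the finite-group case, and specifically controlling the overlaps between the projected pieces $r_{i}\Pi_{\sigma_{i}(\theta)}(K)+t_{i}(\theta)$: their union being an interval does \emph{not} force the individual pieces to be intervals, so the tiling must be extracted at a sufficiently deep level using the SSC separation between the $S_{\mathbf{i}}(K)$, and only then turned into analytic constraints on $\theta$. A secondary technical point is justifying the weak convergence $P_{k}^{\theta}\to P$ in the infinite-group case, but this is standard once an irrational rotation is present in $\mathcal{T}$.
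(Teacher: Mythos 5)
Your reduction to showing that the set $\Theta$ of interval directions is closed and consists only of isolated points matches the paper, and your infinite-$\mathcal{T}$ case is in effect a proof sketch of Theorem \ref{thm:infinite T no IP} (which the paper simply invokes, citing \cite{linim}). The characterisation of interval directions as $\{\theta:L(\theta)=D(\theta)\}$ and the inequality $L(\theta)\le\sum_i r_i L(\sigma_i(\theta))$ are correct. The genuine gap is the finite-$\mathcal{T}$ case, which is exactly where interval projections actually occur and where the content of the theorem lies. Your plan is to ``convert the tiling rigidity into finitely many real-analytic equations in $\theta$'', but none of the quantities in play is real-analytic: $D(\theta)$ is a width function (a supremum of linear functionals over $K\times K$), the endpoints of $P_\theta(K_{\mathbf{i}})$ are likewise only Lipschitz in $\theta$, and $L(\theta)$ vanishes almost everywhere while being positive on a possibly dense set, so it is certainly not analytic. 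No concrete equation is exhibited, and no argument is offered for why the (unspecified) equations would fail to be identically satisfied beyond ``unless it forces $K$ into a line''. You correctly flag this as the main obstacle, but it is not a technical wrinkle to be deferred --- it is the theorem.

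The paper's mechanism at this point is different and purely geometric. Since $q=\left|\mathcal{T}\right|<\infty$, one chooses a word $\mathbf{i}=(i_1,\dots,i_1)\in\mathcal{I}^{2q}$ with $T_{\mathbf{i}}=\mathrm{id}$, so that $K_{\mathbf{i}}$ is a homothetic copy of $K$, $IP(K_{\mathbf{i}})=IP(K)$, and $P_0(K_{\mathbf{i}})=[c,d]$ is a proper nondegenerate subinterval of $P_0(K)=[a,b]$. Taking $A=K_{\mathbf{i}}$ and $B=\left(P_0^{-1}(J)\cap K\right)\setminus K_{\mathbf{i}}$ for a gap interval $J$ (either $[a,c]$ or $[d,b]$, whichever is nondegenerate; $B$ is compact by the SSC) produces two disjoint compact subsets of $K$ whose projections are abutting nondegenerate intervals meeting in exactly one point, with $P_0(K\setminus(A\cup B))$ avoiding the interior of $P_0(A\cup B)$. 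Lemma \ref{lem: Key lemma} then gives the isolation: any small nonzero tilt either opens a visible gap in the projection (Lemma \ref{lem: Hosszu lemma 2}) or forces $P_\theta(A)$ and $P_\theta(B)$ to overlap in a set of positive measure (Lemma \ref{lem: Hosszu lemma 1}), which is impossible for disjoint compact subsets of a self-similar $1$-set by Lemma \ref{lem: SS Almost disjoint proj}. This perturbation dichotomy is the missing idea you would need in place of the analyticity argument.
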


The $1$-dimensional Sierpinski triangle $K_{\bigtriangleup}$ (see
Example \ref{exa:1dim-sierp}) is a self-similar $1$-set and the
usual SS-IFS for $K_{\bigtriangleup}$ satisfies the SSC and $\left|\mathcal{T}\right|=1$,
where $\left|.\right|$ denotes the cardinality of a set. Hence the
projection onto every line is a self-similar set with `similarity
dimension' $1$ and so by \cite[Corollary (2.3)]{Schief OSC} the
projection has positive $\mathcal{H}^{1}$-measure if and only if
the projection contains an interval. Kenyon \cite{Kenyon-sierpinski}
showed that this occurs exactly for a countable and dense set of lines
through the origin. So there exists a dense set of lines through the
origin onto which the projection contains an interval, but the projection
onto at most finitely many of them is an interval projection by Theorem
\ref{thm:SS int-proj SSC}. One can show that $K_{\bigtriangleup}$
has exactly three interval projections. In Example \ref{exa:negyzet-ip}
and Example \ref{exa:four corner} we provide examples of self-similar
$1$-sets such that the projection of them onto four different lines
are intervals.

While Theorem \ref{thm:SS int-proj SSC} requires the SSC, we would
like to eliminate this separation condition but to do so we need a
further assumption on $\mathcal{T}$. A similarity $S:\mathbb{R}^{d}\longrightarrow\mathbb{R}^{d}$
is called a \textit{homothety} if there are $r\in\mathbb{R}\setminus\{0\}$
and $v\in\mathbb{R}^{2}$ such that $S(x)=rx+v$ for every $x\in\mathbb{R}^{d}$.
\begin{thm}
\label{thm:SS int-proj OSC}Let $\left\{ S_{i}\right\} _{i=1}^{m}$
be a self-similar iterated function system in $\mathbb{R}^{2}$ with
attractor $K$ such that all $S_{i}$ are homotheties and $K$ is
a $1$-set that is not contained in any line. Then there are at most
finitely many lines through the origin such that the orthogonal projection
onto them are interval projections of $K$.
\end{thm}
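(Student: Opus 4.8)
The plan is to split along the rectifiability dichotomy and to exploit that, for homotheties, projecting the IFS produces a one--dimensional self-similar IFS whose contraction ratios and orientations are \emph{fixed} while only the translation parts vary analytically with the direction. Write $M_\theta$ for the line through the origin in direction $u_\theta=(\cos\theta,\sin\theta)$, so that $\Pi_{M_\theta}(x)=\langle x,u_\theta\rangle$. Since each $S_i(x)=r_iT_i(x)+v_i$ with $T_i=\pm\mathrm{id}$, one has $\Pi_{M_\theta}\circ S_i=\widetilde S_i^{\,\theta}\circ\Pi_{M_\theta}$ where $\widetilde S_i^{\,\theta}(t)=\varepsilon_i r_i t+\langle v_i,u_\theta\rangle$ and $\varepsilon_i\in\{-1,+1\}$; hence $\Pi_{M_\theta}(K)$ is itself the attractor of the one--dimensional SS-IFS $\{\widetilde S_i^{\,\theta}\}_{i=1}^{m}$, whose ratios $r_i$ and signs $\varepsilon_i$ are independent of $\theta$ and whose translations $p_i(\theta):=\langle v_i,u_\theta\rangle$ are real-analytic in $\theta$. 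First I would dispose of the purely $1$-unrectifiable case: by the Besicovitch--Federer projection theorem \cite{Besicovitch-Proj THM,Federer-Proj THM} such a $K$ has $\mathcal H^1(\Pi_{M_\theta}(K))=0$ for all but at most one $\theta$, and since $K$ is not contained in a line no projection is a single point, so there is at most one interval projection. It remains to treat the case in which $K$ is not purely $1$-unrectifiable.

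In that case I would use the structure of the rectifiable part. I would first show that a self-similar $1$-set which is not purely $1$-unrectifiable must contain a genuine line segment $I$ (a tangent/density argument combined with self-similarity). Because every $S_i$ is a homothety it preserves directions, so every cylinder copy $S_{\mathbf i}(I)\subseteq K$ is parallel to $I$, and, as $\mathcal T\subseteq\{\pm\mathrm{id}\}$ also preserves lines, the rectifiable part of $K$ is a countable union of segments lying in finitely many directions. This rigidity of the tangent directions is exactly what a general $1$-set need not have (the semicircle projects onto an interval in \emph{every} direction), and it is what must ultimately bound the number of interval directions.

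Next I would extract the combinatorial constraints imposed by an interval projection. If $\Pi_{M_\theta}(K)=[a(\theta),b(\theta)]$ is a (necessarily nondegenerate) interval, then, up to the finitely many sign possibilities, its endpoints are projections of the fixed points $x_L,x_R$ of the two extreme maps, so on any range of directions sharing the same extreme maps one has $a(\theta)=\langle x_L,u_\theta\rangle$ and $b(\theta)=\langle x_R,u_\theta\rangle$ with $x_L,x_R$ constant. Moreover the pieces $\widetilde S_i^{\,\theta}([a,b])$ must cover $[a,b]$ with no gap, which, after ordering them and recording which consecutive pieces abut, becomes a finite system of real-analytic equations in $\theta$. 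There are only finitely many such combinatorial patterns, so pattern by pattern the set $A=\{\theta:\Pi_{M_\theta}(K)\text{ is an interval}\}$ is a zero set of real-analytic functions. If $A$ were infinite it would have an accumulation point, and by the pigeonhole principle some single pattern would be realised on a set of $\theta$ with an accumulation point, forcing the corresponding equations to hold identically on an arc $(\alpha,\beta)$ of directions.

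The crux is to rule out that $\Pi_{M_\theta}(K)$ is an interval for every $\theta$ in an arc. In the tiling regime the mechanism is clean: each abutment condition reads $\langle w_k,u_\theta\rangle=0$ for a fixed vector $w_k$ built from $x_L,x_R$ and the $v_i$; holding on an arc forces $w_k=0$, i.e.\ exact matching of the cylinder endpoints independent of direction, which collapses the defining data into a single line and contradicts the hypothesis that $K$ is a $1$-set not contained in a line. The genuine difficulty, and the main obstacle, is to reach this regime: from $\mathcal H^1(K)=\mathcal H^1(\bigcup_i S_i(K))$ one only gets $\sum_i r_i\ge 1$, and $\sum_i r_i>1$ would force positive-measure overlaps of the cylinders, for which ``no gap'' is an open condition that could in principle persist on an arc. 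The plan here is to exclude such overlaps for a $1$-set that is not contained in a line under the homothety assumption---reducing, after passing to a strongly separated subsystem, to Theorem \ref{thm:SS int-proj SSC}---thereby forcing $\sum_i r_i=1$ and the exact-tiling picture above. Establishing this no-overlap reduction, together with the bookkeeping needed to make the analytic-rigidity step uniform over the (finitely many) sign and adjacency patterns, is where the real work lies; once it is in place, finiteness of $A$ is immediate.
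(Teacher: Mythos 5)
Your case split runs into trouble at both ends, and the case you dispose of in one line is the one where all the difficulty lives. Under the hypotheses of the theorem $K$ is automatically purely $1$-unrectifiable: if $\mathcal{H}^{1}(K\cap\Gamma)>0$ for some $C^{1}$ curve $\Gamma$, then Corollary \ref{cor: manifold intersection cor} (the paper's generalisation of Mattila's rectifiability theorem, proved precisely so that no separation condition is needed) forces $K$ to lie in a line, contradicting the assumption. So your second case is vacuous, and your first case \emph{is} the theorem. There the appeal to Besicovitch--Federer is incorrect: for a purely $1$-unrectifiable $1$-set the projection theorem gives $\mathcal{H}^{1}(\Pi_{M_{\theta}}(K))=0$ for \emph{almost every} $\theta$, not for all but at most one $\theta$ (the ``all but at most one'' clause belongs to the other, rectifiable half of the dichotomy). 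The exceptional set of directions is merely Lebesgue null and can be infinite; Kenyon's theorem, quoted in the introduction, shows that for the $1$-dimensional Sierpinski gasket the projection has positive length, and even contains an interval, for a countable dense set of directions. Hence nothing in your argument excludes infinitely many interval projections in the purely unrectifiable case, which is exactly what has to be proved. The paper instead shows that each $P_{\theta}\in IP(K)$ is \emph{isolated}: it produces two disjoint cylinders whose projections abut in a single point (their existence comes from a ``non-fitting piece''; the alternative, that every cylinder is fitting, would build a continuous graph inside $K$ and, via Corollary \ref{cor: manifold intersection cor}, put $K$ on a line), feeds them into Lemma \ref{lem: Key lemma}, and combines the resulting isolation with compactness of $IP(K)$ (Lemma \ref{lem: Compact IP lem}).

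Even setting aside that your second case cannot occur, its internal machinery has an unresolved gap that you yourself flag: reducing ``$\Pi_{M_{\theta}}(K)$ is an interval'' to a finite system of real-analytic abutment equations is only plausible when the projected first-level pieces tile without overlap, and your proposed route to that regime --- forcing $\sum_{i}r_{i}=1$ by ``passing to a strongly separated subsystem and reducing to Theorem \ref{thm:SS int-proj SSC}'' --- does not work as stated: a self-similar $1$-set can have similarity dimension strictly greater than $1$ (so $\sum_{i}r_{i}>1$) because of overlaps, and a strongly separated subsystem has a different, smaller attractor about which Theorem \ref{thm:SS int-proj SSC} says nothing concerning $K$ itself. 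As it stands the proposal does not prove the theorem.
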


\begin{rem}
\label{rem:invar finiteness}The reason why we need in the proof that
every $S_{i}$ is a homothety is that the set of interval projections
$IP(K)$ is invariant under the action of $\mathcal{T}$, i.e. if
$\Pi_{M}\in IP(K)$ then $\Pi_{T(M)}\in IP(K)$ for every $T\in\mathcal{T}$
. Hence via a similar argument to the one in the proof of Theorem
\ref{thm:SS int-proj OSC} one can show that instead of assuming that
every similarity is a homothety it is enough to assume that $IP(K)$
is invariant under the action of $\mathcal{T}$.
\end{rem}

In Theorem \ref{thm:SS int-proj OSC} the assumption, that $K$ is
not contained in any line, is essential because a non-degenerate line
segment $[x,y]$ on the plane is a $1$-set and is the attractor of
some SS-IFS that contains only homotheties, but the orthogonal projection
onto every line is an interval projection of $[x,y]$. Due to Remark
\ref{rem:invar finiteness} the only self similar sets in the plane
of Hausdorff dimension $1$ that project onto an interval in every
direction are line segments.

In Proposition \ref{prop: Tangent plane prop} and in Corollary \ref{cor: manifold intersection cor}
we generalize a result of Mattila \cite[Propoition 4.2, Corollary 4.3]{Mattila-rectifiablility}
on the unrectifiability of self-similar sets as we get rid of the
`open set condition' from the assumptions. This generalization will
play an important role in the proof of Theorem \ref{thm:SS int-proj OSC}.

The following result of Farkas \cite[Theorem 1.5]{linim} implies
that if the similarities contain an irrational rotation then $K$
has no interval projection.
\begin{thm}
\label{thm:infinite T no IP}Let $\left\{ S_{i}\right\} _{i=1}^{m}$
be an SS-IFS in $\mathbb{R}^{2}$ with attractor $K$ and assume that
$\mathcal{H}^{1}\left(K\right)<\infty$. If $\left|\mathcal{T}\right|=\infty$
then $\mathcal{H}^{1}(\Pi_{M}(K))=0$ for every line $M$. Hence $K$
has no interval projection.
\end{thm}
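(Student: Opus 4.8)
The plan is to reduce to the case of a genuine $1$-set, prove that $K$ is purely $1$-unrectifiable, apply the Besicovitch--Federer projection theorem quoted in the introduction to dispose of almost every direction, and then upgrade ``almost every'' to ``every'' using the self-similar structure. First I would record the reductions. Since $\Pi_{M}$ is $1$-Lipschitz we have $\mathcal{H}^{1}(\Pi_{M}(K))\le\mathcal{H}^{1}(K)$, so if $\mathcal{H}^{1}(K)=0$ the claim is trivial; hence assume $0<\mathcal{H}^{1}(K)<\infty$, i.e.\ $K$ is a $1$-set. Note that such a $K$ cannot lie in a line: if $K\subseteq\ell$ then $S_{j}(K)=r_{j}T_{j}(K)+v_{j}\subseteq K\subseteq\ell$ would force $T_{j}$ to preserve the spanning direction of $\ell$, which fails for an irrational rotation unless $K$ is a single point. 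Finally, $\mathcal{T}\cap SO(2)$ is an infinite, finitely generated subgroup of the circle group, hence dense; so $\mathcal{T}$ contains rotations by a dense set of angles, and in particular an irrational rotation.

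Next I would establish pure $1$-unrectifiability. Decompose the $1$-set into its rectifiable and purely unrectifiable parts and show the rectifiable part is $\mathcal{H}^{1}$-null by a tangent argument: at $\mathcal{H}^{1}$-almost every point of a rectifiable $1$-set there is an approximate tangent line, but for a self-similar set the blow-ups at any coding point reproduce copies of $K$ rotated by the orthogonal parts $T_{\mathbf{i}}$. Since these rotations are dense and $K$ is not contained in a line, no blow-up can concentrate in a thin cone about a single line, so no approximate tangent can exist; this is the mechanism behind Proposition \ref{prop: Tangent plane prop}. Hence $K$ is purely $1$-unrectifiable, and by the Besicovitch--Federer theorem recalled in the introduction, $\mathcal{H}^{1}(\Pi_{M}(K))=0$ for almost every line $M$.

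The crux is then to upgrade this to every line. Write $g(\phi)=\mathcal{H}^{1}(\Pi_{M_{\phi}}(K))$, where $\phi$ is the angle of the line, and set $\mathcal{A}=\{\phi:g(\phi)>0\}$, which the previous step shows is Lebesgue-null. Two structural facts drive the endgame. First, projecting $K=\bigcup_{i}S_{i}(K)$ and using that $\Pi_{M_{\phi}}(S_{i}(K))$ is an affine image, scaled by $r_{i}$, of $\Pi_{T_{i}^{-1}M_{\phi}}(K)$ yields
\[
r_{i}\,g(T_{i}^{-1}\phi)\le g(\phi)\le\sum_{i}r_{i}\,g(T_{i}^{-1}\phi).
\]
The lower bound gives $T_{i}(\mathcal{A})\subseteq\mathcal{A}$, so $\mathcal{A}$ is forward-invariant under the dense semigroup generated by the $T_{i}$, whence $\mathcal{A}$ is either empty or dense. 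Second, because $K$ is compact, $\phi\mapsto\Pi_{M_{\phi}}(K)$ is continuous in the Hausdorff metric and Lebesgue measure of compact subsets of the line is upper semicontinuous there, so $g$ is upper semicontinuous and each level set $\{g\ge c\}$ with $c>0$ is closed, nowhere dense and null. If $\mathcal{A}$ were itself closed, density would force $\mathcal{A}=\mathbb{RP}^{1}$, contradicting nullity; the obstacle is that $\mathcal{A}$ is only an $F_{\sigma}$, since a null projection can become positive under an arbitrarily small rotation, and I expect this to be the main difficulty. The plan to overcome it is to use the upper bound quantitatively: iterating gives $g(\phi)\le\sum_{\mathbf{i}\in\mathcal{I}^{k}}r_{\mathbf{i}}\,g(T_{\mathbf{i}}^{-1}\phi)$, and as $k\to\infty$ the directions $T_{\mathbf{i}}^{-1}\phi$ equidistribute by density of the rotations, so since $g=0$ Lebesgue-almost everywhere one drives the weighted average of $g$ over these directions to zero and forces $g(\phi)=0$ at every $\phi$. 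Controlling the normalisation $\sum_{\mathbf{i}\in\mathcal{I}^{k}}r_{\mathbf{i}}=(\sum_{i}r_{i})^{k}$ against this decay, i.e.\ accounting for the overlaps that keep $\sum_{i}r_{i}$ from being too large, is the technical heart. Once $g\equiv 0$ is proved, every projection is null; and since $K$ is not contained in a line, no projection degenerates to a single point, so $\Pi_{M}(K)$ is never an interval.
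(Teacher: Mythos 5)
The first thing to note is that the paper does not prove this theorem at all: it is imported verbatim from \cite[Theorem 1.5]{linim}, so there is no in-paper argument to compare your proposal against. Judged on its own merits, your proposal is sound up to and including the ``almost every line'' stage: the reduction to a $1$-set, the observation that such a $K$ cannot lie in a line when $\left|\mathcal{T}\right|=\infty$, the density of $\mathcal{T}\cap SO(2)$, pure $1$-unrectifiability (for which you could simply cite Corollary \ref{cor: manifold intersection cor} rather than re-deriving the blow-up mechanism), and the Besicovitch--Federer theorem all go through. The identity $\mathcal{H}^{1}\left(\Pi_{M_{\phi}}(S_{i}(K))\right)=r_{i}\,g(T_{i}^{-1}\phi)$ and the resulting two-sided bounds are also correct, as is the upper semicontinuity of $g$.

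The genuine gap is exactly where you flag it, and it is not a technicality but the entire content of the theorem: passing from ``almost every'' to ``every''. The invariance $T_{i}(\mathcal{A})\subseteq\mathcal{A}$ only shows that $\mathcal{A}$ is empty or dense, and a dense null $F_{\sigma}$ is perfectly consistent with everything established up to that point. Your proposed repair via the iterated upper bound does not close: since $0<\mathcal{H}^{1}(K)\leq\sum_{i}r_{i}\,\mathcal{H}^{1}(K)$ one always has $\sum_{i}r_{i}\geq1$, typically with strict inequality (whenever the similarity dimension exceeds $1$ or the pieces overlap substantially), so the total weight $\left(\sum_{i}r_{i}\right)^{k}$ grows exponentially, while equidistribution against the \emph{normalised} weights, applied to the merely upper semicontinuous $g$ vanishing almost everywhere, only yields that the normalised average tends to $0$ --- an indeterminate $\infty\cdot0$. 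Nothing in the proposal controls this competition, and closing it seems to require the genuinely new machinery of \cite{linim} (for instance Theorem \ref{thm:vet meaasure SS} and the almost-disjointness of projections of disjoint compact subsets), which is presumably why the present paper cites the result rather than proving it. A smaller point: the final sentence ``hence $K$ has no interval projection'' needs $\Pi_{M}(K)$ not to degenerate to a single point; you cover this when $K$ is a $1$-set, but in the discarded case $\mathcal{H}^{1}(K)=0$ (e.g.\ $K$ a singleton) every projection is a degenerate interval in the sense of Definition \ref{Def: Int-Proj}, so that case needs a separate word rather than being dismissed as trivial.
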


\subsection{Moment of inertia}

For $\theta\in\mathbb{R}$ let $L_{\theta}\subseteq\mathbb{R}^{2}$
be the line $\left\{ (t\cos\theta,t\sin\theta)\right\} _{t\in\mathbb{R}}$
and let $P_{\theta}$ be the projection $\Pi_{L_{\theta}}$. In Physics
the moment of inertia of a rigid body in $\mathbb{R}^{3}$ with respect
to a rotational axis determines the torque needed for a desired angular
acceleration about the rotational axis. It is the rotational motion
analog of mass for linear motion and is expressed by $\int r^{2}\mathrm{d}\mu$
where $r$ is the distance of a point from the rotational axis and
$\mu$ is the mass distribution of the rigid body. If the body is
a solid of revolution about the rotational axis then it is enough
to analyse a $2$-dimensional cross section of the body through the
rotational axis. In that case $\mu$ is a mass distribution in $\mathbb{R}^{2}=\left\{ (x,y)\in\mathbb{R}^{2}:x,y\in\mathbb{R}\right\} $
so in the rest of this section $\mu$ is a mass distribution in $\mathbb{R}^{2}$.
If the rotational axis is $L_{\frac{\pi}{2}+\theta}$ then the moment
of inertia can be expressed by $\int r^{2}\mathrm{d}\mu=\int\left|P_{\theta}(x,y)\right|^{2}\mathrm{d}\mu(x,y)$.

The next proposition states that if $\mu$ is a mass distribution
in $\mathbb{R}^{2}$ such that the centre of mass is the origin and
the moment of inertia with respect to three different rotational axis
are the same then they are the same with respect to every rotational
axis. This is well-known in mechanics, but for completeness we include
the proof.
\begin{prop}
\label{prop:general moment of  inertia}Let $\mu$ be a finite Borel
measure in $\mathbb{R}^{2}=\left\{ (x,y)\in\mathbb{R}^{2}:x,y\in\mathbb{R}\right\} $
such that $\int x^{2}+y^{2}\mathrm{d}\mu(x,y)<\infty$ and
\[
\int x\mathrm{d}\mu(x,y)=\int y\mathrm{d}\mu(x,y)=0.
\]
Assume that there exist $c>0$ and three different angles $\theta,\phi,\psi\in[0,\pi)$
such that
\[
c=\int\left|P_{\theta}(x,y)\right|^{2}\mathrm{d}\mu(x,y)=\int\left|P_{\phi}(x,y)\right|^{2}\mathrm{d}\mu(x,y)=\int\left|P_{\psi}(x,y)\right|^{2}\mathrm{d}\mu(x,y).
\]
Then $\int x\cdot y\mathrm{d}\mu(x,y)=0$ and $\int\left|P_{\gamma}(x,y)\right|^{2}\mathrm{d}\mu(x,y)=c$
for every $\gamma\in\mathbb{R}$.
\end{prop}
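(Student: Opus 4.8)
The plan is to reduce the whole statement to one explicit trigonometric identity for the moment of inertia as a function of the direction angle, and then to a counting argument for a sinusoid. First I would identify the line $L_\theta$ with $\mathbb{R}$ through its unit direction vector $(\cos\theta,\sin\theta)$, so that the signed coordinate of the projection is the dot product and hence $\left|P_\theta(x,y)\right|^2=(x\cos\theta+y\sin\theta)^2$. Setting $A=\int x^2\,\mathrm{d}\mu$, $B=\int xy\,\mathrm{d}\mu$, $C=\int y^2\,\mathrm{d}\mu$ (all finite: $A,C$ by the hypothesis $\int x^2+y^2\,\mathrm{d}\mu<\infty$, and $B$ by Cauchy--Schwarz), expanding the square and integrating gives
\[
I(\theta):=\int\left|P_\theta(x,y)\right|^2\,\mathrm{d}\mu=A\cos^2\theta+2B\cos\theta\sin\theta+C\sin^2\theta.
\]
Via the double-angle formulas this rewrites as
\[
I(\theta)=\frac{A+C}{2}+\frac{A-C}{2}\cos2\theta+B\sin2\theta,
\]
which is a single sinusoid in the variable $2\theta$: there are a constant $D=(A+C)/2$, an amplitude $R=\sqrt{((A-C)/2)^2+B^2}$ and a phase $\delta$ with $I(\theta)=D+R\cos(2\theta-\delta)$.

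Next I would exploit the three prescribed equalities. The hypothesis gives $I(\theta)=I(\phi)=I(\psi)=c$ for three distinct $\theta,\phi,\psi\in[0,\pi)$. Since the map $t\mapsto 2t$ is injective on $[0,\pi)$ with image $[0,2\pi)$, the numbers $2\theta,2\phi,2\psi$ are three distinct points of $[0,2\pi)$ at which the $2\pi$-periodic function $\alpha\mapsto D+R\cos(\alpha-\delta)$ attains the common value $c$. The elementary key fact is that, when $R>0$, the equation $D+R\cos(\alpha-\delta)=c$ is equivalent to $\cos(\alpha-\delta)=(c-D)/R$, and $\cos$ takes any fixed value at most twice on a half-open interval of length $2\pi$. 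Therefore three distinct solutions can occur only if $R=0$.

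Finally, $R=0$ forces simultaneously $B=0$ and $A=C$. The first yields $\int xy\,\mathrm{d}\mu=0$, the claimed vanishing of the cross term. With $B=0$ and $A=C$ the formula collapses to $I(\theta)=D=(A+C)/2$ for every $\theta$, a constant independent of the angle; evaluating at $\theta$ identifies this constant as $c$, so $I(\gamma)=c$ for all $\gamma\in\mathbb{R}$. I expect no serious obstacle: the computation is routine and the only point needing a little care is the middle step, namely the clean justification that a non-constant sinusoid cannot attain the same value three times within a single period, which is precisely what pins down $R=0$.
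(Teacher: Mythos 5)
Your proof is correct. Both you and the paper begin identically, by expanding
\[
\int\left|P_{\gamma}(x,y)\right|^{2}\mathrm{d}\mu
=A\cos^{2}\gamma+2B\cos\gamma\sin\gamma+C\sin^{2}\gamma,
\qquad A=\int x^{2}\mathrm{d}\mu,\ B=\int xy\,\mathrm{d}\mu,\ C=\int y^{2}\mathrm{d}\mu,
\]
but you finish differently. The paper views the right-hand side as $v^{\mathsf T}Qv$ for the symmetric matrix $Q=\bigl(\begin{smallmatrix}A&B\\ B&C\end{smallmatrix}\bigr)$ and $v=(\cos\gamma,\sin\gamma)$, and invokes the abstract fact that a symmetric bilinear form on the plane is determined by its quadratic values on three pairwise independent vectors; comparing with $c$ times the standard inner product (which takes the value $c$ on the same three unit vectors) forces $Q=cI$. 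You instead rewrite the expression via double-angle formulas as the sinusoid $D+R\cos(2\gamma-\delta)$ with $D=(A+C)/2$ and $R=\sqrt{((A-C)/2)^{2}+B^{2}}$, and observe that a nonconstant sinusoid attains a given value at most twice per period, so three distinct angles in $[0,\pi)$ (hence three distinct points $2\theta,2\phi,2\psi$ in one period) force $R=0$, i.e.\ $B=0$ and $A=C$. The two routes are equivalent in content, but yours is fully self-contained and elementary, whereas the paper's is shorter and identifies the underlying object as the moment of inertia tensor, which is the conceptual point the author wants to highlight. One cosmetic remark: like the paper, you never need the centering hypothesis $\int x\,\mathrm{d}\mu=\int y\,\mathrm{d}\mu=0$; it plays no role in this proposition and is only relevant to the surrounding discussion. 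Your aside that $B$ is finite (by Cauchy--Schwarz, or simply $|xy|\leq(x^{2}+y^{2})/2$) is a detail the paper leaves implicit.
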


\begin{proof}
\noindent For every $\gamma\in\mathbb{R}$
\[
\int\left|P_{\gamma}(x,y)\right|^{2}\mathrm{d}\mu(x,y)=\int\left(x\cos\gamma+y\sin\gamma\right)^{2}\mathrm{d}\mu(x,y)
\]
\[
=\cos^{2}\gamma\int x^{2}\mathrm{d}\mu(x,y)+\sin^{2}\gamma\int y^{2}\mathrm{d}\mu(x,y)+2\cos\gamma\sin\gamma\int xy\mathrm{d}\mu(x,y)=
\]
\[
\begin{bmatrix}\cos\gamma & \sin\gamma\end{bmatrix}\begin{bmatrix}\int x^{2}\mathrm{d}\mu & \int xy\mathrm{d}\mu\\
\int xy\mathrm{d}\mu & \int y^{2}\mathrm{d}\mu
\end{bmatrix}\begin{bmatrix}\cos\gamma\\
\sin\gamma
\end{bmatrix}.
\]
We can think of the $2\times2$ symmetric matrix in the middle as
a symmetric bilinear form in the plane. The proposition follows from
the fact that a symmetric bilinear form $\beta$ in the plane is uniquely
determined by the quantities $\beta(v,v)$ for three pairwise independent
vectors $v$.
\end{proof}
\begin{rem}
The symmetric bilinear form, appearing in the previous proof, is known
in mechanics as the moment of inertia tensor.
\end{rem}

For a Borel measure $\mu$ on $\mathbb{R}^{2}$, a subspace $M\subseteq\mathbb{R}^{2}$
and a Borel function $f:\mathbb{R}^{2}\longrightarrow M$ let $f^{*}\mu(.)=\mu(f^{-1}(.))$
be the image measure in $f(\mathbb{R}^{2})$. In the case when $f=P_{\theta}$
we identify $P_{\theta}(\mathbb{R}^{2})=L_{\theta}=\left\{ (t\cos\theta,t\sin\theta)\right\} _{t\in\mathbb{R}}$
with $\mathbb{R}$.
\begin{cor}
\label{cor:minden vet measure=00003D}Let $\mu$ be a finite Borel
measure in $\mathbb{R}^{2}=\left\{ (x,y)\in\mathbb{R}^{2}:x,y\in\mathbb{R}\right\} $
such that $\int x^{2}+y^{2}\mathrm{d}\mu(x,y)<\infty$. Assume that
there exist three angles $\theta,\phi,\psi\in[0,\pi)$ such that
\[
P_{\theta}^{*}\mu=P_{\phi}^{*}\mu=P_{\psi}^{*}\mu.
\]
Then $\int x\cdot y\mathrm{d}\mu(x,y)=0$ and there exists $c>0$
such that $\int\left|P_{\gamma}(x,y)\right|^{2}\mathrm{d}\mu(x,y)=c$
for every $\gamma\in\mathbb{R}$.
\end{cor}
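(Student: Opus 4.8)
The plan is to deduce the corollary from Proposition \ref{prop:general moment of inertia} by recovering the two hypotheses of that proposition---that the centre of mass of $\mu$ is the origin and that the moments of inertia about the three axes agree---from the single assumption that the three image measures coincide. The mechanism is moment matching: if $P_{\theta}^{*}\mu=P_{\phi}^{*}\mu=P_{\psi}^{*}\mu$ as finite measures on $\mathbb{R}$, then $\int g\,\mathrm{d}(P_{\gamma}^{*}\mu)$ is independent of $\gamma\in\{\theta,\phi,\psi\}$ for every $g$ integrable against these measures. Since $\int x^{2}+y^{2}\,\mathrm{d}\mu<\infty$ and $\left|P_{\gamma}(x,y)\right|\le\sqrt{x^{2}+y^{2}}$, the functions $g(t)=t$ and $g(t)=t^{2}$ are admissible, so the first and second moments of the three image measures agree. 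I would assume throughout that the three angles are distinct, which is the only case in which the conclusion can hold.

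For the first moments, identifying $L_{\gamma}$ with $\mathbb{R}$ via $t=x\cos\gamma+y\sin\gamma$ and using the definition of the image measure gives
\[
\int t\,\mathrm{d}(P_{\gamma}^{*}\mu)(t)=\int\left(x\cos\gamma+y\sin\gamma\right)\mathrm{d}\mu(x,y)=a\cos\gamma+b\sin\gamma,
\]
where $a=\int x\,\mathrm{d}\mu$ and $b=\int y\,\mathrm{d}\mu$. Equating this quantity for $\gamma=\theta,\phi,\psi$ produces a homogeneous linear system in $(a,b)$ with coefficient rows $(\cos\theta-\cos\phi,\sin\theta-\sin\phi)$ and $(\cos\theta-\cos\psi,\sin\theta-\sin\psi)$. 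These rows are linearly independent because the three points $(\cos\theta,\sin\theta)$, $(\cos\phi,\sin\phi)$, $(\cos\psi,\sin\psi)$ are distinct points on the unit circle and hence not collinear (a line meets a circle in at most two points). Therefore $a=b=0$, i.e. the centre of mass of $\mu$ is the origin. For the second moments, the same change of variables gives $\int t^{2}\,\mathrm{d}(P_{\gamma}^{*}\mu)(t)=\int\left|P_{\gamma}(x,y)\right|^{2}\mathrm{d}\mu(x,y)$, so equality of the image measures yields a common value $c=\int\left|P_{\theta}\right|^{2}\mathrm{d}\mu=\int\left|P_{\phi}\right|^{2}\mathrm{d}\mu=\int\left|P_{\psi}\right|^{2}\mathrm{d}\mu$. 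These two facts are exactly the hypotheses of Proposition \ref{prop:general moment of inertia}, which then delivers $\int xy\,\mathrm{d}\mu=0$ and $\int\left|P_{\gamma}\right|^{2}\mathrm{d}\mu=c$ for every $\gamma$.

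The point requiring care is the strict positivity $c>0$, since Proposition \ref{prop:general moment of inertia} takes $c>0$ as a hypothesis. If instead $c=0$, then $\int\left|P_{\theta}\right|^{2}\mathrm{d}\mu=\int\left|P_{\phi}\right|^{2}\mathrm{d}\mu=0$ forces $x\cos\theta+y\sin\theta=0$ and $x\cos\phi+y\sin\phi=0$ for $\mu$-almost every $(x,y)$; as $\theta\ne\phi$ these two lines through the origin meet only at the origin, so $\mu$ is a point mass at the origin. This is the only degenerate case, and it is excluded once $\mu$ is not concentrated at a single point, as for the mass distributions on self-similar $1$-sets to which the corollary will be applied. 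Verifying that this degenerate possibility is the sole obstruction to $c>0$ is the main subtlety in the argument; everything else reduces to the routine moment computation above.
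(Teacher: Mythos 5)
Your proposal is correct and takes essentially the same route as the paper: both arguments reduce the corollary to Proposition \ref{prop:general moment of  inertia} by noting that equality of the image measures forces equality of their first and second moments, with the only cosmetic difference being that you conclude the centre of mass is the origin from the non-collinearity of three distinct points on the unit circle where the paper invokes the identity $\frac{\cos\alpha-\cos\beta}{\sin\alpha-\sin\beta}=-\tan\left(\frac{\alpha+\beta}{2}\right)$. Your closing observation is also apt: the stated conclusion $c>0$ genuinely fails when $\mu$ is a point mass at the origin (where all hypotheses hold but every projection integral vanishes), a degenerate case the paper passes over in silence and which is harmless in the intended applications.
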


If we knew that $\int x\mathrm{d}\mu(x,y)=\int y\mathrm{d}\mu(x,y)=0$
then Corollary \ref{cor:minden vet measure=00003D} would be an immediate
consequence of Proposition \ref{prop:general moment of  inertia}
since $\int\left|P_{\gamma}(x,y)\right|^{2}\mathrm{d}\mu(x,y)=\int\left|(x,y)\right|^{2}\mathrm{d}P_{\gamma}^{*}\mu(x,y)$
for every $\gamma\in\mathbb{R}$. To see that $\int x\mathrm{d}\mu(x,y)=\int y\mathrm{d}\mu(x,y)=0$
we need to show that the centre of mass is the origin. It follows
by an easy trigonometric calculation using the trigonometric identity
$\frac{\cos\alpha-\cos\beta}{\sin\alpha-\sin\beta}=-\tan\left(\frac{\alpha+\beta}{2}\right)$
and that
\[
\int P_{\theta}(x,y)\mathrm{d}\mu(x,y)=\int P_{\phi}(x,y)\mathrm{d}\mu(x,y)=\int P_{\psi}(x,y)\mathrm{d}\mu(x,y).
\]
In other words, if the projection of the centre of mass onto three
different lines has the same distance from the origin then it has
to be the origin. We leave for the reader to check the details.

The following result of Farkas \cite[Theorem 1.3]{linim} says that
if $\mathcal{H}^{1}\left(P_{\theta}(K)\right)>0$ for a self-similar
$1$-set $K$ then the projection measure is constant times the restriction
of Lebesgue measure to the projection.
\begin{thm}
\label{thm:vet meaasure SS}Let $K\subseteq\mathbb{R}^{2}$ be a self-similar
$1$-set. If $\mathcal{H}^{1}\left(P_{\theta}(K)\right)>0$ then
\[
P_{\theta}^{*}\left(\mathcal{H}^{1}\vert_{K}\right)=\frac{\mathcal{H}^{1}\left(K\right)}{\mathcal{H}^{1}\left(P_{\theta}(K)\right)}\cdot\mathcal{H}^{1}\vert_{P_{\theta(K)}}.
\]
\end{thm}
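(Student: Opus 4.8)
The plan is to realise $\mu:=P_{\theta}^{*}(\mathcal{H}^{1}\vert_{K})$ as the invariant measure of a graph-directed iterated function system living on the $\mathcal{T}$-orbit of the line $L_{\theta}$, and then to check that the family of length measures on the projected sets solves the very same system; uniqueness of the invariant measure will then force $\mu$ to be a constant multiple of $\mathcal{H}^{1}\vert_{P_{\theta}(K)}$. Two reductions come first. Since $\mathcal{H}^{1}(K)<\infty$ and $\mathcal{H}^{1}(P_{\theta}(K))>0$, Theorem \ref{thm:infinite T no IP} gives $\left|\mathcal{T}\right|<\infty$, so the orbit is finite. If $K$ is contained in a line, then a self-similar $1$-subset of a line with positive length is, up to an $\mathcal{H}^{1}$-null set, a segment (via \cite{Schief OSC}) and the statement is a direct computation; so I may assume $K$ is not contained in a line.

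Write $\nu=\mathcal{H}^{1}\vert_{K}$, of total mass $a=\mathcal{H}^{1}(K)$. The structural fact I would use is that $\nu$ is self-similar, i.e. $\nu=\sum_{i}r_{i}S_{i}^{*}\nu$; since $\mathcal{H}^{1}\vert_{K_{i}}=r_{i}S_{i}^{*}\nu$ always holds (by the scaling of $\mathcal{H}^{1}$ under the similarity $S_{i}$), this is equivalent to $\sum_{i}r_{i}=1$ together with $\mathcal{H}^{1}(K_{i}\cap K_{j})=0$ for $i\neq j$. Let $O=\{L_{\theta_{1}},\ldots,L_{\theta_{N}}\}$ be the orbit of $L_{\theta}$ under $\mathcal{T}$ and set $\mu_{j}=P_{\theta_{j}}^{*}\nu$, $E_{j}=P_{\theta_{j}}(K)$ and $\ell_{j}=\mathcal{H}^{1}(E_{j})$. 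Using $S_{i}(x)=r_{i}T_{i}(x)+v_{i}$ one computes $P_{\theta_{j}}\circ S_{i}=\phi_{i,j}\circ P_{\theta_{\tau(i,j)}}$, where $L_{\theta_{\tau(i,j)}}=T_{i}^{-1}(L_{\theta_{j}})\in O$ and $\phi_{i,j}(t)=\pm r_{i}t+b_{i,j}$ is a similarity of $\mathbb{R}$. Pushing the self-similarity of $\nu$ through $P_{\theta_{j}}$ then yields the graph-directed relations $\mu_{j}=\sum_{i}r_{i}\phi_{i,j}^{*}\mu_{\tau(i,j)}$, while at the level of sets $E_{j}=\bigcup_{i}\phi_{i,j}(E_{\tau(i,j)})$.

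Next comes an averaging step that simultaneously pins down the lengths and transfers the system to length measure. Subadditivity of $\mathcal{H}^{1}$ gives $\ell_{j}\leq\sum_{i}r_{i}\ell_{\tau(i,j)}$. For each fixed $i$ the assignment $j\mapsto\tau(i,j)$ permutes $O$ (as $T_{i}^{-1}$ acts bijectively on the orbit), so summing over $j$ and using $\sum_{i}r_{i}=1$ gives $\sum_{j}\ell_{j}\leq\sum_{j}\ell_{j}$; hence every one of these inequalities must be an equality. Equality in subadditivity means the pieces $\phi_{i,j}(E_{\tau(i,j)})$ overlap in $\mathcal{H}^{1}$-null sets, so the length measures obey the identical graph-directed system $\mathcal{H}^{1}\vert_{E_{j}}=\sum_{i}r_{i}\phi_{i,j}^{*}(\mathcal{H}^{1}\vert_{E_{\tau(i,j)}})$. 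Moreover the vector $(\ell_{j})$ satisfies $\ell=M\ell$ with $M_{j,j'}=\sum_{i:\tau(i,j)=j'}r_{i}$ row-stochastic; because the generators $T_{i}$ generate the finite group $\mathcal{T}$ as a semigroup, the orbit $O$ is strongly connected and $M$ is irreducible, so by Perron--Frobenius $\ell_{j}$ is constant. The hypothesis $\ell_{j}>0$ for the index with $\theta_{j}=\theta$ then gives $\ell_{j}\equiv\ell>0$ throughout $O$.

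Finally, setting $c=a/\ell$, the two vectors of measures $(\mu_{j})_{j}$ and $(c\,\mathcal{H}^{1}\vert_{E_{j}})_{j}$ are both fixed points of the contractive graph-directed Hutchinson operator and carry the same total mass $a$ in each coordinate; uniqueness of the invariant measure of such a system forces $\mu_{j}=c\,\mathcal{H}^{1}\vert_{E_{j}}$ for every $j$, and the coordinate with $\theta_{j}=\theta$ is exactly the asserted identity. The step I expect to be the genuine obstacle is the structural input used throughout, namely that a self-similar $1$-set not contained in a line satisfies $\sum_{i}r_{i}=1$ and $\mathcal{H}^{1}(K_{i}\cap K_{j})=0$; in the absence of a separation condition this is the assertion that no dimension drop occurs, and the overlapping example of a segment realised by three homotheties of ratio $\tfrac12$ shows it genuinely fails once $K$ lies in a line. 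I would therefore secure it, in the off-line case, through a rectifiability/no-concentrated-overlap argument in the spirit of the tangent analysis this paper develops in Proposition \ref{prop: Tangent plane prop} and Corollary \ref{cor: manifold intersection cor}, thereby reducing the overlapping case to the separated one on which the above argument runs.
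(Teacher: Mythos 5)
The paper does not actually prove Theorem \ref{thm:vet meaasure SS}: it imports it from \cite[Theorem 1.3]{linim}, where the argument proceeds by approximating $K$ from inside by attractors of iterated subsystems satisfying the strong separation condition --- a route designed precisely to avoid the structural identity your argument relies on. Your graph-directed scheme (pushing a self-similar measure around the finite $\mathcal{T}$-orbit of $L_{\theta}$, the averaging/Perron--Frobenius step, and uniqueness of the invariant measure vector) is internally coherent once its inputs are granted, but its load-bearing input is false.

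The gap is the ``structural fact'' that a self-similar $1$-set not contained in a line satisfies $\sum_{i}r_{i}=1$ and $\mathcal{H}^{1}(K_{i}\cap K_{j})=0$, i.e.\ that $\mathcal{H}^{1}\vert_{K}$ is self-similar for the given presentation. This is a property of the presentation, not of the set, whereas the theorem's conclusion depends only on $K$. Concretely, take the $1$-dimensional Sierpinski triangle $K_{\bigtriangleup}$ of Example \ref{exa:1dim-sierp}, a $1$-set not contained in a line, and re-present it by the SS-IFS $\left\{ S_{1},S_{2},S_{3},S_{1}\circ S_{1}\right\}$: the attractor is unchanged, but $\sum_{i}r_{i}=1+\tfrac{1}{9}$ and $\mathcal{H}^{1}\left(S_{1}(K_{\bigtriangleup})\cap S_{1}\circ S_{1}(K_{\bigtriangleup})\right)=\mathcal{H}^{1}(K_{\bigtriangleup})/9>0$. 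Worse, given $\mathcal{H}^{1}(K)>0$, the identity $\sum_{i}r_{i}=1$ forces the similarity dimension to equal $1$ and hence, by Schief \cite{Schief OSC}, forces the open set condition for that presentation; so your argument silently reduces to the OSC case, which is exactly the case this theorem is meant to go beyond. The proposed repair --- extracting the fact from the tangent-plane analysis of Proposition \ref{prop: Tangent plane prop} and Corollary \ref{cor: manifold intersection cor} --- cannot work: those results yield pure $1$-unrectifiability of $K$ off a line, which constrains how $K$ meets $C^{1}$ curves but says nothing about overlaps between cylinders or about the existence of an OSC re-presentation (the latter being adjacent to well-known open problems). A secondary slip: in the collinear case you invoke Schief to conclude that a positive-length self-similar subset of a line is a segment up to a null set, which without a separation condition is essentially the open question quoted in the introduction; fortunately that case needs no such input, since $P_{\theta}$ restricted to the line is a bijective similarity onto $L_{\theta}$ and the asserted identity follows by direct computation.
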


We establish an invariance result of the moment of inertia of self-similar
sets with several interval projections. The conclusion is that if
we rotate the self-similar set about the centre of mass by an arbitrary
angle then the moment of inertia does not change with respect to a
fixed axis.
\begin{thm}
\label{thm:SS moment of inertia}Let $K\subseteq\mathbb{R}^{2}$ be
a self-similar $1$-set. Assume that there are three different lines
through the origin such that the orthogonal projection of $K$ onto
them is an interval of length $c$ centered at the origin. Then for
$\mu=\mathcal{H}^{1}\vert_{K}$ we have that $\int x\cdot y\mathrm{d}\mu(x,y)=0$
and $\int\left|P_{\gamma}(x,y)\right|^{2}\mathrm{d}\mu(x,y)=\mathcal{H}^{1}\left(K\right)\cdot c^{2}/12$
for every $\gamma\in\mathbb{R}$. Furthermore, if $P_{\theta}(K)$
is an interval for some $\theta\in\mathbb{R}$ then $P_{\theta}(K)$
is an interval of length $c$ centered at the origin.
\end{thm}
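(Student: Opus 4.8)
The plan is to reduce everything to Corollary \ref{cor:minden vet measure=00003D} together with a one–dimensional moment computation. Write $\mu=\mathcal{H}^{1}\vert_{K}$ and let $\theta,\phi,\psi\in[0,\pi)$ be three distinct angles whose lines $L_{\theta},L_{\phi},L_{\psi}$ carry the interval projections. Since $K$ is a self-similar $1$-set it is compact with $0<\mathcal{H}^{1}(K)<\infty$, so $\int(x^{2}+y^{2})\,\mathrm{d}\mu<\infty$; moreover $c>0$, for otherwise the three projections would all equal $\{0\}$ and $K$ would be contained in $L_{\theta}^{\perp}\cap L_{\phi}^{\perp}=\{0\}$, contradicting $\mathcal{H}^{1}(K)>0$.

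First I would force the three image measures to coincide. For each $\gamma\in\{\theta,\phi,\psi\}$ the set $P_{\gamma}(K)$ is, under the identification $L_{\gamma}=\mathbb{R}$, the interval $[-c/2,c/2]$, so $\mathcal{H}^{1}(P_{\gamma}(K))=c>0$ and Theorem \ref{thm:vet meaasure SS} gives
\[
P_{\gamma}^{*}\mu=\frac{\mathcal{H}^{1}(K)}{c}\,\mathcal{H}^{1}\vert_{[-c/2,c/2]}.
\]
The right-hand side is independent of $\gamma$, hence $P_{\theta}^{*}\mu=P_{\phi}^{*}\mu=P_{\psi}^{*}\mu$. This is exactly the hypothesis of Corollary \ref{cor:minden vet measure=00003D}, which then yields $\int xy\,\mathrm{d}\mu=0$ together with a constant $c'>0$ such that $\int|P_{\gamma}(x,y)|^{2}\,\mathrm{d}\mu=c'$ for every $\gamma\in\mathbb{R}$. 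To identify $c'$ I would evaluate at $\gamma=\theta$, pushing forward to the line and integrating $t^{2}$ against $P_{\theta}^{*}\mu$:
\[
c'=\frac{\mathcal{H}^{1}(K)}{c}\int_{-c/2}^{c/2}t^{2}\,\mathrm{d}t=\mathcal{H}^{1}(K)\cdot\frac{c^{2}}{12},
\]
which establishes the first two assertions.

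For the final assertion I would first record that the centre of mass is the origin: for $\gamma\in\{\theta,\phi,\psi\}$ the symmetry of $[-c/2,c/2]$ gives $\int P_{\gamma}\,\mathrm{d}\mu=\int t\,\mathrm{d}P_{\gamma}^{*}\mu=0$, so the centre of mass projects to $0$ on two distinct lines and therefore vanishes, i.e. $\int x\,\mathrm{d}\mu=\int y\,\mathrm{d}\mu=0$. Now let $P_{\theta}(K)$ be an interval for some $\theta$. It cannot be a single point, since centring would force that point to be $0$ and then $\int|P_{\theta}|^{2}\,\mathrm{d}\mu=0\neq c'$; hence $P_{\theta}(K)=[a,b]$ with $\ell:=\mathcal{H}^{1}(P_{\theta}(K))=b-a>0$, and by Theorem \ref{thm:vet meaasure SS} we have $P_{\theta}^{*}\mu=\frac{\mathcal{H}^{1}(K)}{\ell}\mathcal{H}^{1}\vert_{[a,b]}$. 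Vanishing of the centre of mass forces $0=\int t\,\mathrm{d}P_{\theta}^{*}\mu=\tfrac{1}{2}\mathcal{H}^{1}(K)(a+b)$, so $b=-a$ and the interval is $[-\ell/2,\ell/2]$; comparing its second moment $\mathcal{H}^{1}(K)\ell^{2}/12$ with $c'=\mathcal{H}^{1}(K)c^{2}/12$ gives $\ell=c$.

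The bulk of the argument is bookkeeping with one–dimensional integrals; the one genuinely substantive point, and the step I would be most careful about, is the observation that centring the three interval projections at the origin forces the three push-forwards $P_{\theta}^{*}\mu,P_{\phi}^{*}\mu,P_{\psi}^{*}\mu$ to be \emph{equal as measures on $\mathbb{R}$} rather than merely to share moments, which is precisely what licenses the application of Corollary \ref{cor:minden vet measure=00003D}. Everything downstream, including pinning the length of any further interval projection to $c$, is then forced by matching first and second moments.
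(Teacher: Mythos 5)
Your proposal is correct and is precisely the argument the paper intends: the paper's entire ``proof'' is the remark that the theorem follows from Theorem \ref{thm:vet meaasure SS}, Corollary \ref{cor:minden vet measure=00003D} and the identity $\int_{0}^{a}x^{2}\mathrm{d}x=a^{3}/3$, and you have filled in exactly those steps (equality of the push-forwards, the second-moment computation giving $\mathcal{H}^{1}(K)c^{2}/12$, and the first/second-moment matching for the final assertion). No discrepancies to report.
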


Theorem \ref{thm:SS moment of inertia} follows from Theorem \ref{thm:vet meaasure SS},
Corollary \ref{cor:minden vet measure=00003D} and the fact that $\int_{0}^{a}x^{2}\mathrm{d}x=a^{3}/3$.

\section{Rectifiability of self-similar sets\label{sec:Rectifiability-of-self-similar}}

In this section we generalise a result of Mattila \cite[Propoition 4.2, Corollary 4.3]{Mattila-rectifiablility}
on the rectifiability of self-similar sets as we remove the separation
condition from the assumptions. Proof follows Mattila`s proof hence
we only indicate the differences in the proof.

For $K\subseteq\mathbb{R}^{d}$, $a\in K$, $s\in\mathbb{R}$ we denote
the $s$-dimensional upper density of $K$ in $a$ by
\[
\Theta^{*s}(K,a)=\limsup_{r\rightarrow0}\frac{\mathcal{H}^{s}(K\cap B(a,r))}{(2r)^{s}}
\]
and the $s$-dimensional lower density of $K$ in $a$ by
\[
\Theta_{*}^{s}(K,a)=\liminf_{r\rightarrow0}\frac{\mathcal{H}^{s}(K\cap B(a,r))}{(2r)^{s}}.
\]

\begin{defn}
\label{Def: Weak tanget def}Let $K\subseteq\mathbb{R}^{d}$ and $0\leq s\leq d$.
We say that $K$ is weakly $(s,l)$-tangential at a point $a\in\mathbb{R}^{d}$
if $\Theta^{*s}(K,a)>0$ and there is an affine $l$-plane $M$ such
that $a\in M$ and for every $\delta>0$
\[
\liminf_{r\rightarrow0}r^{-s}\cdot\mathcal{H}^{s}\left(\left(E\cap B(a,r)\right)\setminus\left\{ x:\mathrm{dist}(x,M)\leq\delta\right\} \right)=0.
\]
Then $M$ is called a weak $(s,l)$-tangent plane of $K$ at $a$.
\end{defn}

\begin{prop}
\label{prop: Tangent plane prop}Let $K$ be a self-similar set and
$s=\dim_{H}(K)$. Suppose that $\mathcal{H}^{s}(K)>0$ and $K$ has
a weak tangent plane $M$ at some point $a\in K$. Then $K\subseteq M$.
\end{prop}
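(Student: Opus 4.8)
The plan is to blow up $K$ at the point $a$ and to exploit that, by self-similarity, the blow-up is again a (rotated, rescaled) copy of the whole set $K$; the weak tangent condition will force this copy to lie in $M$, and reading this back will give $K\subseteq M$. First I would normalise: translating, we may assume $a=0$ and that $M$ is a linear $l$-plane through the origin. It is classical that a self-similar set satisfies $0<\mathcal H^{s}(K)<\infty$ for $s=\dim_{H}(K)$, and since each cylinder has $\mathcal H^{s}(K_{\mathbf i})=r_{\mathbf i}^{s}\mathcal H^{s}(K)>0$, every relatively open subset of $K$ has positive measure, so $K=\operatorname{supp}\big(\mathcal H^{s}\vert_{K}\big)$. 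Throughout I work with the finite measure $\nu=\mathcal H^{s}\vert_{K}$, which on similar copies transforms by $S_{\mathbf i}^{*}\big(\mathcal H^{s}\vert_{K}\big)=r_{\mathbf i}^{s}\,\mathcal H^{s}\vert_{K_{\mathbf i}}$ up to the obvious identification.

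Next I would produce the copies of $K$ that accumulate at $a$. Fix an address $\sigma=(i_{1},i_{2},\dots)$ of $a$, so that $\{a\}=\bigcap_{k}K_{\sigma\vert k}$ and the cylinders $K_{\sigma\vert k}=S_{\sigma\vert k}(K)$ shrink to $a$ with $r_{\sigma\vert k}\to0$. Writing $a_{k}=S_{\sigma\vert k}^{-1}(a)\in K$ and using $S_{\sigma\vert k}(x)=r_{\sigma\vert k}T_{\sigma\vert k}(x)+v_{\sigma\vert k}$, one checks that after rescaling by the factor $1/r_{\sigma\vert k}$ the cylinder $K_{\sigma\vert k}$ becomes
\[
\widetilde K_{k}:=\tfrac{1}{\operatorname{diam}K}\,T_{\sigma\vert k}\big(K-a_{k}\big),
\]
an isometric-up-to-the-fixed-scale copy of $K$ that contains $0$, lies in $\overline{B(0,1)}$ and carries rescaled mass $\operatorname{diam}(K)^{-s}\mathcal H^{s}(K)>0$. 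Since $T_{\sigma\vert k}\in O(d)$ and $a_{k}\in K$ range in the compact sets $O(d)$ and $K$, I would pass to a subsequence along which $T_{\sigma\vert k}\to T_{\infty}$ and $a_{k}\to a_{\infty}\in K$; then $\widetilde K_{k}\to\widetilde K:=\tfrac{1}{\operatorname{diam}K}T_{\infty}(K-a_{\infty})$ in the Hausdorff metric and the rescaled copies of $\nu$ converge weakly to the corresponding copy $\widetilde\nu$ of $\mathcal H^{s}\vert_{K}$, whose support is exactly $\widetilde K$.

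Now I would feed in the weak tangent condition of Definition~\ref{Def: Weak tanget def}: it says precisely that, at the relevant small scales, the $\mathcal H^{s}$-mass of $K$ near $a$ that sits at definite distance from $M$ is negligible compared with the scale to the power $s$. Transporting this bound through the rescalings and using weak convergence (portmanteau), the limit measure $\widetilde\nu$ must give zero mass to the complement of every neighbourhood of $M$; hence $\widetilde K=\operatorname{supp}(\widetilde\nu)\subseteq M$. Thus $T_{\infty}(K-a_{\infty})\subseteq M$, so $K$ is contained in the affine $l$-plane $a_{\infty}+T_{\infty}^{-1}(M)$. A final comparison identifies this plane with $M$: the containment $K\subseteq a_{\infty}+T_{\infty}^{-1}(M)$ exhibits $a_{\infty}+T_{\infty}^{-1}(M)$ as a tangent $l$-plane of $K$ at $a$, and since $\Theta^{*s}(K,a)>0$ the weak tangent plane is unique (in the case $s=l$, which is the one relevant here, this is immediate because the blow-up directions are then forced to fill the direction space of $M$), so it coincides with $M$ and $K\subseteq M$.

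The step I expect to be the main obstacle, and the only place where the argument genuinely departs from Mattila's, is controlling the mass quantitatively once the \emph{strong separation / open set condition is dropped}. With separation one chooses, at each good scale, a single cylinder of comparable size that is disjoint from its neighbours, carries a fixed proportion of the mass, and is therefore forced to lie near $M$; without separation the cylinders of a given size overlap and may pile up, so this counting is unavailable. I would instead argue entirely at the level of the finite measure $\nu=\mathcal H^{s}\vert_{K}$: I would first use $\Theta^{*s}(K,a)>0$ together with the tangent condition to select, by a diagonal argument over $\delta\to0$, a sequence of scales that is simultaneously good for the lower mass bound and for the concentration near $M$, and then transfer the concentration to the single accumulating family $\{\widetilde K_{k}\}$ above, whose mass is fixed a priori by self-similarity and therefore needs no separation to be bounded below. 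Making this measure-level transfer rigorous—rather than isolating a separated copy—is the heart of removing the open set condition.
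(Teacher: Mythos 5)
Your route is genuinely different from the paper's. The paper does not re-prove the proposition from scratch: it runs Mattila's original argument verbatim and only verifies that the two places where the open set condition enters survive without it, namely that $\mathcal{H}^{s}(K)<\infty$ (via Falconer's implicit methods) and, crucially, Hutchinson's uniform lower density bound $\Theta_{*}^{s}(K,x)\geq\lambda>0$ for all $x\in K$, whose proof needs only $0<\mathcal{H}^{s}(K)<\infty$ together with the fact that every ball centred on $K$ contains a cylinder $K_{\boldsymbol{\mathbf{i}}}$ of comparable size. This is exactly the ingredient you declare unavailable once separation is dropped and then try to engineer around at the level of the measure; in fact it is available, and recognising this is the entire content of the paper's proof. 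That said, your blow-up along the cylinders $K_{\sigma\vert k}$ containing $a$ is a legitimate alternative for the containment step: the rescaled measures are exact copies of $\mathcal{H}^{s}\vert_{K}$ of fixed positive mass by the scaling of Hausdorff measure (no separation needed), the tangent condition kills the mass away from $M$ along suitable scales, and portmanteau plus $K=\mathrm{supp}\left(\mathcal{H}^{s}\vert_{K}\right)$ forces the limit copy into the direction space of $M$. Two details there need care: the liminf in Definition~\ref{Def: Weak tanget def} only provides good scales depending on $\delta$, so you must select, at each good scale, the cylinder $K_{\sigma\vert k}$ of comparable diameter and then diagonalise over $\delta$; and the neighbourhood in that definition must be read as $\left\{ x:\mathrm{dist}(x,M)\leq\delta r\right\}$, as you implicitly do.

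The genuine gap is your last step. The blow-up only yields $K\subseteq a_{\infty}+T_{\infty}^{-1}(M-a)$, i.e.\ containment in \emph{some} $l$-plane $M'$, and your identification of $M'$ with $M$ rests on ``uniqueness of the weak tangent plane'', which is false in general: for $s<l$ a line segment in $\mathbb{R}^{3}$ has a one-parameter family of weak $(1,2)$-tangent planes at each of its points, and Proposition~\ref{prop: Tangent plane prop} is stated and used for $l\geq s$, not only $l=s$. Even when $s=l$ the claim is not ``immediate'': to exclude $M'\neq M$ one must compare the mass of $K\cap B(a,r)$, bounded below by $\lambda r^{s}$ at the good scales, with the mass that can lie in the $C\delta r$-neighbourhood of the lower-dimensional plane $M\cap M'$, bounded above by $C'\delta r^{s}$ via upper regularity --- and this is precisely the density machinery you set out to avoid. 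So either you prove the uniform lower density bound after all, at which point you have rejoined the paper's (i.e.\ Mattila's) argument, or you must weaken the conclusion to ``$K$ lies in some $l$-plane'', which happens to suffice for Corollary~\ref{cor: manifold intersection cor} but is not what the proposition asserts.
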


Proposition \ref{prop: Tangent plane prop} was proved by Mattila
\cite[Theorem 4.2]{Mattila-rectifiablility} in the case when the
`open set condition' is satisfied. The same proof, that was used by
Mattila, can be applied to prove Proposition \ref{prop: Tangent plane prop}
in the general case. The only points where the proof uses the `open
set condition' are that $0<\mathcal{H}^{s}(K)<\infty$ and a result
of Hutchinson (\cite[Theorem 5.3 (1) (i)]{Hutchinson}) that states
that there exists $\lambda>0$ such that $\Theta_{*}^{s}(K,a)\geq\lambda$
for all $a\in K$. By an application of the implicit methods \cite[Thm 3.2]{Falconer Techniques}
we can deduce that $\mathcal{H}^{s}(K)<\infty$, and by assumption
$\mathcal{H}^{s}(K)>0$. To prove, that there exists $\lambda>0$
such that $\Theta_{*}^{s}(K,a)\geq\lambda$ for all $a\in K$, one
can follow Hutchinson's proof that only depends on the facts that
$0<\mathcal{H}^{s}(K)<\infty$ and if $r_{min}=\min\left\{ r_{i}:i\in\mathcal{I}\right\} $
then $B(a,r)$ contains $K_{\boldsymbol{\mathbf{i}}}$ for some $\boldsymbol{\mathbf{i}}\in\bigcup_{k=1}^{\infty}\mathcal{I}^{k}$
such that $r\cdot r_{min}\leq r_{\boldsymbol{\mathbf{i}}}\cdot\mathrm{diam}(K)\leq r$
for every small $r$ and $a\in K$. We leave for the reader to check
the details.

\begin{cor}
\label{cor: manifold intersection cor}Let $K$ be a self-similar
set, let $s=\dim_{H}(K)$ and $l\in\mathbb{N}$, $l\geq s$. Then
either $K\subseteq M$ for some $l$-dimensional affine subspace $M$
or $\mathcal{H}^{s}(K\cap\Gamma)=0$ for every  $l$-dimensional $C^{1}$
submanifold $\Gamma$ of $\mathbb{R}^{d}$.
\end{cor}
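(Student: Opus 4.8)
The plan is to reduce Corollary~\ref{cor: manifold intersection cor} to Proposition~\ref{prop: Tangent plane prop} by producing a weak tangent plane at a suitable point whenever the intersection with a $C^1$ submanifold has positive measure. So suppose that $K\not\subseteq M$ for every $l$-dimensional affine subspace $M$, and suppose towards a contradiction that there is an $l$-dimensional $C^1$ submanifold $\Gamma$ of $\mathbb{R}^d$ with $\mathcal{H}^s(K\cap\Gamma)>0$. First I would recall the standard density facts for $s$-sets: since $0<\mathcal{H}^s(K)<\infty$ (as established in the discussion following Proposition~\ref{prop: Tangent plane prop}), the set $A=K\cap\Gamma$ satisfies $\mathcal{H}^s(A)>0$ and $\mathcal{H}^s(A)<\infty$, so by the standard density theorem for $\mathcal{H}^s$ (see Falconer's \emph{Techniques}), $\mathcal{H}^s$-almost every point $a\in A$ is a density point of $A$ in the sense that $\Theta^{*s}(A,a)>0$; in particular $\Theta^{*s}(K,a)\ge\Theta^{*s}(A,a)>0$ at such points.

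Next I would fix such a density point $a\in A=K\cap\Gamma$ and take $M$ to be the affine tangent $l$-plane to the $C^1$ manifold $\Gamma$ at $a$ (this exists precisely because $\Gamma$ is $C^1$, and $a\in M$). The $C^1$ hypothesis gives exactly the local flatness needed: for every $\delta>0$ there is $r_0>0$ so that every point $x\in\Gamma\cap B(a,r)$ with $r\le r_0$ satisfies $\mathrm{dist}(x,M)\le\delta r\le\delta\,(\text{small})$, because the deviation of $\Gamma$ from its tangent plane is $o(r)$. The aim is to verify Definition~\ref{Def: Weak tanget def}, namely that
\[
\liminf_{r\to0}r^{-s}\cdot\mathcal{H}^s\!\left(\left(K\cap B(a,r)\right)\setminus\left\{x:\mathrm{dist}(x,M)\le\delta\right\}\right)=0
\]
for every $\delta>0$. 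The point is that the mass of $K$ near $a$ that sits far from $M$ must be carried by $K\setminus\Gamma$, since $A=K\cap\Gamma$ lies $o(r)$-close to $M$. Using that $a$ is a density point of $A$ inside $K$ one controls the ratio $\mathcal{H}^s\big((K\setminus A)\cap B(a,r)\big)/\mathcal{H}^s\big(K\cap B(a,r)\big)$ along a sequence $r\to0$, and this forces the displayed $\liminf$ to vanish; hence $M$ is a weak $(s,l)$-tangent plane of $K$ at $a$.

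Having produced a weak tangent plane, Proposition~\ref{prop: Tangent plane prop} applies directly (its hypotheses $\mathcal{H}^s(K)>0$ and the existence of a weak tangent plane are met), yielding $K\subseteq M$, which contradicts the standing assumption that $K$ is not contained in any $l$-dimensional affine subspace. This contradiction shows $\mathcal{H}^s(K\cap\Gamma)=0$, completing the proof. The main obstacle I anticipate is the quantitative density bookkeeping in the middle step: one must convert the $C^1$-flatness of $\Gamma$ and the positivity of the upper density of $A$ at $a$ into a genuine sequence $r_n\to0$ along which the ``bad'' mass $r_n^{-s}\mathcal{H}^s\big((K\cap B(a,r_n))\setminus\{x:\mathrm{dist}(x,M)\le\delta\}\big)$ tends to zero, uniformly over the chosen $\delta$. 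The delicacy is that the weak-tangent condition only demands a $\liminf$, so it suffices to find one good scale sequence; the natural choice is to pick $r_n$ realizing the limsup defining $\Theta^{*s}(A,a)$, along which $A$ fills a fixed proportion of $K\cap B(a,r_n)$, forcing the complementary mass far from $M$ to be negligible. Carrying this out carefully — and checking that the $o(r)$ deviation of $\Gamma$ from $M$ indeed pushes the relevant points inside the $\delta$-neighbourhood for all small $r$ — is the technical heart, but it is routine once the density point is fixed.
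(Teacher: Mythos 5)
Your overall strategy is exactly the intended one (the paper simply defers to Mattila's deduction of his Corollary 4.3 from his Theorem 4.2): pick a suitable point $a$ of $A=K\cap\Gamma$, use the $C^{1}$ smoothness of $\Gamma$ to see that near $a$ the set $A$ lies in any fixed slab around the tangent plane $M$ of $\Gamma$ at $a$, conclude that $M$ is a weak $(s,l)$-tangent plane of $K$ at $a$, and invoke Proposition \ref{prop: Tangent plane prop}. However, the justification you give for the central estimate has a genuine gap. You argue that along radii $r_{n}$ realizing $\Theta^{*s}(A,a)>0$ the set $A$ ``fills a fixed proportion of $K\cap B(a,r_{n})$, forcing the complementary mass far from $M$ to be negligible.'' Positivity of the upper density of $A$ at $a$ does not force $\mathcal{H}^{s}\left((K\setminus A)\cap B(a,r_{n})\right)=o(r_{n}^{s})$: both $A$ and $K\setminus A$ can simultaneously carry mass comparable to $r^{s}$ in $B(a,r)$ for all small $r$. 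Already for $K=[0,1]\subseteq\mathbb{R}$, $s=1$, $A=[0,1/2]$ and $a=1/2$ one has $\Theta^{*1}(A,a)=1/2>0$ along every sequence of radii, while $r^{-1}\mathcal{H}^{1}\left((K\setminus A)\cap B(a,r)\right)$ is identically $1$. So the mechanism you propose fails at some points of $A$, and nothing in your choice of $a$ rules such points out.

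What is actually needed is the standard density theorem for subsets of sets of finite $\mathcal{H}^{s}$-measure: since $\mathcal{H}^{s}(K)<\infty$ and $A=K\cap\Gamma$ is Borel, hence $\mathcal{H}^{s}$-measurable, one has $\Theta^{*s}(K\setminus A,a)=0$ for $\mathcal{H}^{s}$-almost every $a\in A$ (see Mattila's book or Falconer's \emph{Techniques}). Choose $a\in A$ satisfying both this and $\Theta^{*s}(A,a)>0$; each condition holds $\mathcal{H}^{s}$-almost everywhere on $A$ and $\mathcal{H}^{s}(A)>0$, so such a point exists, and it gives $\Theta^{*s}(K,a)>0$ as required by Definition \ref{Def: Weak tanget def}. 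The $C^{1}$ flatness then yields, for each $\delta>0$ and all sufficiently small $r$,
\[
\left(K\cap B(a,r)\right)\setminus\left\{ x:\mathrm{dist}(x,M)\leq\delta\right\} \subseteq\left(K\setminus A\right)\cap B(a,r),
\]
so the required $\liminf$ (indeed the full limit) vanishes and $M$ is a weak $(s,l)$-tangent plane of $K$ at $a$. With this repair the rest of your argument goes through and coincides with the deduction the paper refers to.
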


Corollary \ref{cor: manifold intersection cor} can be deduced from
Proposition \ref{prop: Tangent plane prop} exactly the same way as
\cite[Corollary 4.3]{Mattila-rectifiablility} is deduced from \cite[Theorem 4.2]{Mattila-rectifiablility}.

\section{Isolated interval projections\label{sec:Isolated-interval-projections}}

This section provides the main tool, Lemma \ref{lem: Key lemma},
to prove that an interval projection is isolated in the set of projections.
At the end of the section, Lemma \ref{lem: Compact IP lem}, we show
that the set of interval projections is compact.

For a set $H\subseteq\mathbb{R}^{d}$ we denote the convex hull of
$H$ by $Conv(H)$ and if $F\subseteq\mathbb{R}^{d}$ we denote the
distance between $H$ and $F$ by $\mathrm{dist}(H,F)=\inf_{x\in H,y\in F}\left\Vert x-y\right\Vert $.
On the plane the set of all lines through the origin $G_{2,1}$ can
be parameterized as $G_{2,1}=\left\{ L_{\theta}:\theta\in\nicefrac{\mathbb{R}}{\pi\mathbb{Z}}\right\} $.
Let $\prod_{2,1}$ denote the set of all orthogonal projections onto
the lines through the origin. Then $\prod_{2,1}=\left\{ \Pi_{L}:L\in G_{2,1}\right\} =\left\{ P_{\theta}:\theta\in\nicefrac{\mathbb{R}}{\pi\mathbb{Z}}\right\} $
hence $\prod_{2,1}$ inherits a topology and with this topology $\prod_{2,1}$
is compact. For a set $K\subseteq\mathbb{R}^{2}$ we denote the set
of all interval projections of $K$ by $IP(K)$.
\begin{lem}
\label{lem: Hosszu lemma 1}Let $K\subseteq\mathbb{R}^{2}$ be a compact
set, $A\subseteq K$ be a compact subset such that $IP(K)\subseteq IP(A)$
and let $L^{1}$ and $L^{2}$ be lines parallel to the $y$-axis such
that $A$ stays between $L^{1}$ and $L^{2}$, both $L^{1}$ and $L^{2}$
intersect $A$ and the $x$-coordinate of the points of $L^{1}$ is
smaller then the $x$-coordinate of the points of $L^{2}$. Let $B\subseteq K$
be a compact subset such that for every $P_{\theta}\in IP(K)$ we
have that $\mathcal{H}^{1}\left(P_{\theta}(A)\cap P_{\theta}(B)\right)=0$
and for every open set $U$ that intersects $B$ and for every $P_{\theta}\in IP(K)$
we have that $0<\mathcal{H}^{1}(P_{\theta}(B\cap U))$.

i) If either $B$ contains a point $z$ of $L^{1}$ such that there
exists $w\in A\cap L^{1}$ with $y$-coordinate smaller than the $y$-coordinate
of $z$ or $B$ contains a point $z$ of $L^{2}$ such that there
exists $w\in A\cap L^{2}$ with $y$-coordinate greater than the $y$-coordinate
of $z$, then there exists $\delta>0$ such that for every $0<\theta<\delta$
we have that $P_{\theta}\notin IP(K)$.

ii) If either $B$ contains a point $z$ of $L^{1}$ such that there
exists $w\in A\cap L^{1}$ with $y$-coordinate greater than the $y$-coordinate
of $z$ or $B$ contains a point $z$ of $L^{2}$ such that there
exists $w\in A\cap L^{2}$ with $y$-coordinate smaller than the $y$-coordinate
of $z$, then there exists $\delta>0$ such that for every $-\delta<\theta<0$
we have that $P_{\theta}\notin IP(K)$.
\end{lem}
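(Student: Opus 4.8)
The plan is to place coordinates so that $L^{1}=\{(a,t):t\in\mathbb{R}\}$ and $L^{2}=\{(b,t):t\in\mathbb{R}\}$ with $a<b$, and to exploit the explicit formula $P_{\theta}(x,y)=x\cos\theta+y\sin\theta$ under the identification $L_{\theta}\cong\mathbb{R}$. Arguing by contradiction, I would fix a $\theta$ with $P_{\theta}\in IP(K)$; since $IP(K)\subseteq IP(A)$, the set $P_{\theta}(A)$ is then a genuine closed interval, say $[m(\theta),M(\theta)]$, where $m(\theta)$ and $M(\theta)$ are the minimum and maximum of $P_{\theta}$ over the compact set $A$. The goal is to show that for all sufficiently small $\theta$ of the appropriate sign the distinguished point $z\in B$ is sent strictly into the open interval $(m(\theta),M(\theta))$, and then to contradict the hypothesis $\mathcal{H}^{1}(P_{\theta}(A)\cap P_{\theta}(B))=0$.

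The core of the argument is two endpoint comparisons. Consider case i) with $z\in B\cap L^{1}$ and $w\in A\cap L^{1}$ whose $y$-coordinate is smaller than that of $z$; write $z=(a,y_{z})$, $w=(a,y_{w})$ with $y_{w}<y_{z}$. For $\theta>0$ we have $P_{\theta}(z)-P_{\theta}(w)=(y_{z}-y_{w})\sin\theta>0$, and since $w\in A$ forces $P_{\theta}(w)\geq m(\theta)$, we obtain $P_{\theta}(z)>m(\theta)$ for every $\theta>0$. For the other endpoint, pick any $w'=(b,y^{\ast})\in A\cap L^{2}$, which exists because $L^{2}$ meets $A$; then $M(\theta)-P_{\theta}(z)\geq P_{\theta}(w')-P_{\theta}(z)=(b-a)\cos\theta+(y^{\ast}-y_{z})\sin\theta$, and because $b-a>0$ this is strictly positive once $\theta$ is small, yielding a $\delta>0$ with $P_{\theta}(z)<M(\theta)$ for all $0<\theta<\delta$. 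The three remaining configurations in i) and ii) are dispatched by the same two comparisons with the roles of $L^{1}$, $L^{2}$ and the sign of $\sin\theta$ interchanged, which is precisely why ii) delivers the range $-\delta<\theta<0$ rather than $0<\theta<\delta$.

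Once $z$ lies strictly inside $(m(\theta),M(\theta))$, I would choose a ball $U=B(z,\rho)$ so small that its image $P_{\theta}(B\cap U)$, which is contained in the interval $(P_{\theta}(z)-\rho,P_{\theta}(z)+\rho)$ since $P_{\theta}$ is $1$-Lipschitz, still lies inside $(m(\theta),M(\theta))\subseteq P_{\theta}(A)$. The hypothesis on $B$ then gives $\mathcal{H}^{1}(P_{\theta}(B\cap U))>0$, and as $P_{\theta}(B\cap U)\subseteq P_{\theta}(A)$ this forces $\mathcal{H}^{1}(P_{\theta}(A)\cap P_{\theta}(B))>0$, contradicting the standing assumption. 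Hence no $\theta$ in the relevant range can satisfy $P_{\theta}\in IP(K)$, which is the desired conclusion.

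I expect the main subtlety to be the uniformity of $\delta$: the gap $P_{\theta}(z)-m(\theta)$ is only of order $\sin\theta$ and collapses as $\theta\to0$, so the radius $\rho$ must be permitted to depend on $\theta$. This is harmless, because the positivity $\mathcal{H}^{1}(P_{\theta}(B\cap U))>0$ is granted for \emph{every} open $U$ meeting $B$ and every relevant $\theta$, so it suffices to produce, for each individual admissible $\theta$, one workable $\rho$. The genuinely $\theta$-sensitive inequality is $P_{\theta}(z)<M(\theta)$ (respectively $P_{\theta}(z)>m(\theta)$ in the $L^{2}$ configurations): this is exactly where the strict separation $b-a>0$ between the two vertical lines enters, and it is what pins down a single $\delta$ valid for all admissible $\theta$ at once.
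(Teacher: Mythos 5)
Your proposal is correct and is essentially the paper's own proof written out in coordinates: your two endpoint comparisons $P_{\theta}(w)<P_{\theta}(z)<P_{\theta}(w')$ are exactly the paper's observation that for $0<\theta<\delta$ the line through $z$ parallel to $L_{\frac{\pi}{2}+\theta}$ separates $w$ from a chosen point $w_{2}\in A\cap L^{2}$, with the same $\delta$ determined by the line through $z$ and $w_{2}$. The concluding step --- a small neighbourhood $U$ of $z$ with $P_{\theta}(B\cap U)\subseteq P_{\theta}(A)$ forcing $\mathcal{H}^{1}\left(P_{\theta}(A)\cap P_{\theta}(B)\right)>0$ --- is identical to the paper's.
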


\begin{proof}
\textit{i)} Assume that the case $w,z\in L^{1}$ holds (the proof
of the other case, when $w,z\in L^{2}$ holds, goes similarly to the
proof of this case or alternatively can be deduced from this case
by rotating everything around the origin by $\pi$). Let $w_{2}\in A\cap L^{2}$
and let $\delta\in(0,\pi)$ be such that if we draw a line $L^{3}$
through $z$ and $w_{2}$ then $L^{3}$ is parallel to $L_{\frac{\pi}{2}+\delta}$.
We claim that $P_{\theta}\notin IP(K)$ for every $0<\theta<\delta$.
Assume for a contradiction that $P_{\theta}\in IP(K)$ for some $0<\theta<\delta$.
Let $L^{4}$ be the line through $z$ such that $L^{4}$ is parallel
to $L_{\frac{\pi}{2}+\theta}$. Then $L^{4}$ lies strictly between
the points $w$ and $w_{2}$ hence $P_{\theta}(z)$ is in the interior
of the interval $Conv\left(\left\{ P_{\theta}(w),P_{\theta}(w_{2})\right\} \right)$
(see Figure 1). Since $w,w_{2}\in A$ and by assumption $P_{\theta}\in IP(K)\subseteq IP(A)$
it follows that $P_{\theta}(A)$ is an interval and $Conv\left(\left\{ P_{\theta}(w),P_{\theta}(w_{2})\right\} \right)\subseteq P_{\theta}(A)$.
Thus $P_{\theta}(z)$ is in the interior of the interval $P_{\theta}(A)$.
Let $U$ be an open neighborhood of $z$ such that $P_{\theta}(U)\subseteq P_{\theta}(A)$.
Then $P_{\theta}(B\cap U)\subseteq P_{\theta}(A)\cap P_{\theta}(B)$
and by assumption $0<\mathcal{H}^{1}(P_{\theta}(B\cap U))$. Hence
$0<\mathcal{H}^{1}\left(P_{\theta}(A)\cap P_{\theta}(B)\right)$ but
this contradicts the assumption of the lemma.

The proof of \textit{ii)} goes similarly to the proof of \textit{i)}
or alternatively \textit{ii)} can be deduced from \textit{i)} by reflecting
everything in the $y$-axis.
\end{proof}
\includegraphics[scale=0.4]{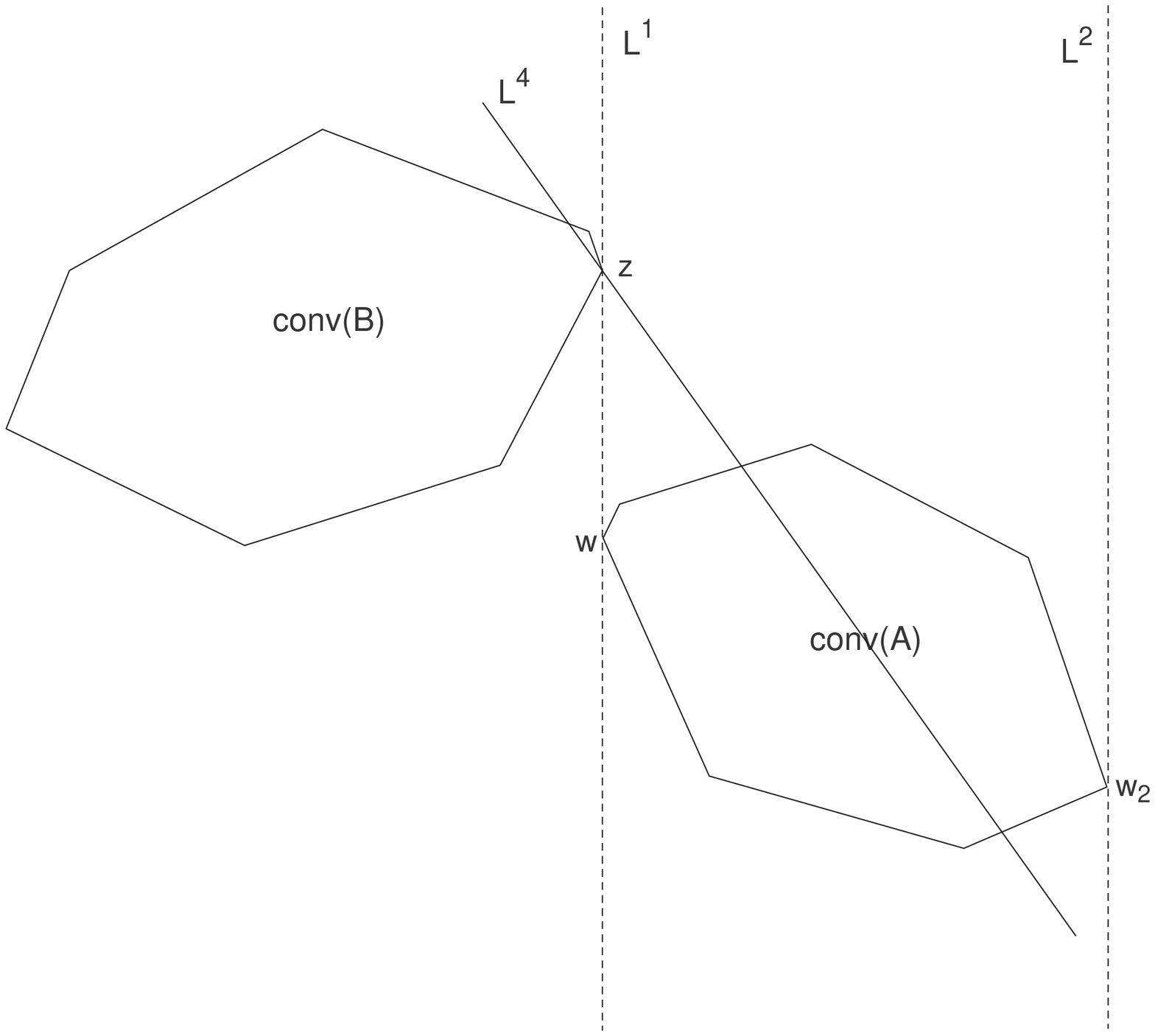}~~~~~~~~~~~~~\includegraphics[scale=0.4]{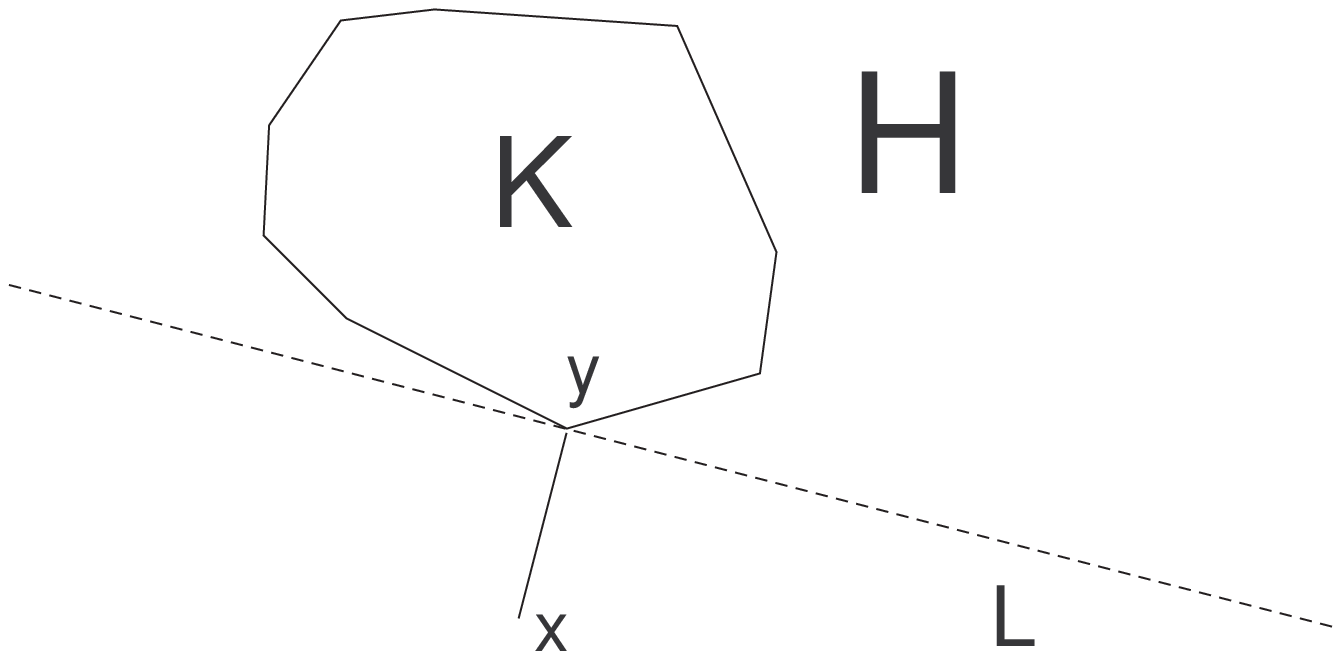}

~~~~~~~~~~~~~~~~~~~~~~~~Figure 1.~~~~~~~~~~~~~~~~~~~~~~~~~~~~~~~~~~~~~~~~~~~~~~~~~~~~~~~~~~~Figure
2.
\begin{lem}
\label{lem:convex perpendicular}Let $K\subseteq\mathbb{R}^{2}$ be
a convex, compact set and let $x\in\mathbb{R}^{2}\setminus K$ and
$y\in K$ such that $\mathrm{dist}(\{x\},K)=\left\Vert x-y\right\Vert $.
Let $L$ be the line through $y$ that is perpendicular to the line
segment $[x,y]$ and let $H$ be the closed half-plane that is bordered
by $L$ and $x\notin H$. Then $K\subseteq H$.
\end{lem}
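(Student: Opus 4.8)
The plan is to prove this by a standard first-order variational argument, exploiting that $y$ is the nearest point of $K$ to $x$. Write $u=x-y$ for the normal direction; since $x\notin K$ while $y\in K$ we have $u\neq0$. The line $L$ is then $\left\{ z:\langle z-y,u\rangle=0\right\}$, and because $\langle x-y,u\rangle=\left\Vert u\right\Vert^{2}>0$ the point $x$ lies on the positive side of $L$, so the half-plane not containing $x$ is $H=\left\{ z:\langle z-y,u\rangle\leq0\right\}$. Thus it suffices to show that $\langle k-y,u\rangle\leq0$ for every $k\in K$.

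First I would argue by contradiction and suppose there is some $k\in K$ with $\langle k-y,u\rangle>0$. Since $K$ is convex, the whole segment $z(t)=y+t(k-y)$ for $t\in[0,1]$ lies in $K$, so each $z(t)$ is an admissible competitor for the nearest point. Next I would compute the squared distance to $x$ along this segment:
\[
\left\Vert x-z(t)\right\Vert^{2}=\left\Vert u-t(k-y)\right\Vert^{2}=\left\Vert u\right\Vert^{2}-2t\langle u,k-y\rangle+t^{2}\left\Vert k-y\right\Vert^{2}.
\]
Its derivative at $t=0$ equals $-2\langle u,k-y\rangle<0$, so for all sufficiently small $t>0$ we obtain $\left\Vert x-z(t)\right\Vert<\left\Vert u\right\Vert=\left\Vert x-y\right\Vert=\mathrm{dist}(\{x\},K)$. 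This contradicts the minimality of the distance at $y$, so no such $k$ can exist and hence $K\subseteq H$.

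There is no serious obstacle here; the content is the elementary fact that moving from the foot of the perpendicular a little way toward any point of $K$ that makes an acute angle with $u$ strictly decreases the distance to $x$. The only things to get right are the sign conventions identifying $H$ with the non-positive side of $u$, and the use of convexity to guarantee that the perturbed point $z(t)$ remains an admissible competitor in $K$; both are immediate, so I expect the write-up to be short.
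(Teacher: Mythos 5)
Your proof is correct and is precisely the argument the paper has in mind: the paper only sketches it, noting that if $z\in K\setminus H$ then some point of the segment $[y,z]$ (which lies in $K$ by convexity) is strictly closer to $x$ than $y$, and your first-order computation of $\left\Vert x-z(t)\right\Vert^{2}$ makes that sketch rigorous. No differences in approach and no gaps.
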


The proof is trivial geometric argument, that if $z\in K\setminus H$
then there is a closer point to $x$ than $y$ on the line segment
$[y,z]$. See Figure 2.
\begin{lem}
\label{lem: Hosszu lemma 2}Let $K\subseteq\mathbb{R}^{2}$ be a compact
set. Let $A,B\subseteq K$ be compact subsets such that $Conv(A)\cap Conv(B)=\emptyset$,
$P_{0}\in IP(A)$, $P_{0}\in IP(B)$, $\mathrm{diam}(P_{0}(A))>0$,
$\mathrm{diam}(P_{0}(B))>0$ and $\left|P_{0}(A)\cap P_{0}(B)\right|=1$.
Then $P_{0}\in IP(A\cup B)$. Assume that $P_{0}(K\setminus(A\cup B))$
and the interior of the interval $P_{0}(A\cup B)$ are disjoint. Let
$L$ be the line through $P_{0}(A)\cap P_{0}(B)$ that is parallel
to the $y$-axis.

i) If either $A$ is to the right of $L$ and there exist $w\in A\cap L$
and $z\in B\cap L$ such that the $y$-coordinate of $w$ is smaller
than the $y$-coordinate of $z$ or $A$ is to the left of $L$ and
there exist $w\in A\cap L$ and $z\in B\cap L$ such that the $y$-coordinate
of $w$ is greater than the $y$-coordinate of $z$, then there exists
$\delta>0$ such that for every $-\delta<\theta<0$ we have that $P_{\theta}\notin IP(K)$.

ii) If either $A$ is to the right of $L$ and there exist $w\in A\cap L$
and $z\in B\cap L$ such that $y$-coordinate of $w$ is greater than
the $y$-coordinate of $z$ or $A$ is to the left of $L$ and there
exist $w\in A\cap L$ and $z\in B\cap L$ such that the $y$-coordinate
of $w$ is smaller than the $y$-coordinate of $z$, then there exists
$\delta>0$ such that for every $0<\theta<\delta$ we have that $P_{\theta}\notin IP(K)$.
\end{lem}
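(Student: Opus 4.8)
The plan is to first dispose of the easy assertion $P_{0}\in IP(A\cup B)$ and then show that a gap opens up in $P_{\theta}(K)$ for the relevant one-sided range of $\theta$. Since $P_{0}\in IP(A)$, $P_{0}\in IP(B)$ with $\mathrm{diam}(P_{0}(A)),\mathrm{diam}(P_{0}(B))>0$ and $\left|P_{0}(A)\cap P_{0}(B)\right|=1$, the two intervals $P_{0}(A)$ and $P_{0}(B)$ are non-degenerate and share exactly one point $x_{0}$; hence they abut and $P_{0}(A\cup B)=P_{0}(A)\cup P_{0}(B)$ is an interval, giving $P_{0}\in IP(A\cup B)$. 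This also forces $A$ and $B$ to lie on opposite closed sides of the vertical line $L=\{x=x_{0}\}$. I treat the representative case of part \textit{i)} in which $A\subseteq\{x\geq x_{0}\}$ and $y_{w}<y_{z}$, the remaining three cases following by rotating the configuration by $\pi$ about the origin and by reflecting in the $y$-axis, exactly as in the proof of Lemma \ref{lem: Hosszu lemma 1}.

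Next I record the contact geometry on $L$. Writing $F_{A}=Conv(A)\cap L$ and $F_{B}=Conv(B)\cap L$ for the (possibly degenerate) vertical contact segments, with $y$-ranges $[\alpha_{1},\alpha_{2}]$ and $[\beta_{1},\beta_{2}]$, the hypothesis $Conv(A)\cap Conv(B)=\emptyset$ gives $F_{A}\cap F_{B}=\emptyset$, so these intervals are disjoint; since $w\in A\cap L$ and $z\in B\cap L$ satisfy $y_{w}<y_{z}$, the segment $F_{A}$ must lie entirely below $F_{B}$, i.e. $\alpha_{2}<\beta_{1}$. The crux of the proof is to locate the extreme points of the projected sets for small $\theta<0$. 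Because $P_{\theta}$ is linear, $\inf P_{\theta}(A)=\inf_{Conv(A)}P_{\theta}$ is attained at an extreme point $m_{\theta}$ of $Conv(A)$; comparing its value with the value at the top vertex $(x_{0},\alpha_{2})\in Conv(A)$ and using $Conv(A)\subseteq\{x\geq x_{0}\}$ one checks that $m_{\theta}\to(x_{0},\alpha_{2})$ as $\theta\to0^{-}$, whence
\[
\inf P_{\theta}(A)=x_{0}\cos\theta+\alpha_{2}\sin\theta+o(\sin\theta).
\]
The supporting line supplied by Lemma \ref{lem:convex perpendicular} gives an alternative, coordinate-free way to see that the minimiser is pinned to the top of the contact face. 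Symmetrically $\sup P_{\theta}(B)=x_{0}\cos\theta+\beta_{1}\sin\theta+o(\sin\theta)$, the maximiser being pinned to the bottom of $F_{B}$. Subtracting and using $\sin\theta<0$ together with $\alpha_{2}<\beta_{1}$,
\[
\inf P_{\theta}(A)-\sup P_{\theta}(B)=(\alpha_{2}-\beta_{1})\sin\theta+o(\sin\theta)>0
\]
for all sufficiently small $\theta<0$; thus $P_{\theta}(A)$ and $P_{\theta}(B)$ separate and the open interval $\bigl(\sup P_{\theta}(B),\inf P_{\theta}(A)\bigr)$ is a non-empty gap lying in an arbitrarily small neighbourhood of $x_{0}$ once $|\theta|$ is small.

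It remains to see that this gap is a genuine hole in $P_{\theta}(K)$, and here the hypothesis that $P_{0}(K\setminus(A\cup B))$ misses the interior of $P_{0}(A\cup B)$ enters. Fix $\eta>0$ with $(x_{0}-2\eta,x_{0}+2\eta)\subseteq\mathrm{int}\,P_{0}(A\cup B)$. Every $p\in K\setminus(A\cup B)$ has $P_{0}(p)\notin(x_{0}-2\eta,x_{0}+2\eta)$, and since $K$ is compact the projections move uniformly, $|P_{\theta}(p)-P_{0}(p)|\leq C|\theta|$ with $C$ depending only on $\mathrm{diam}(K)$; hence for $|\theta|$ small enough $P_{\theta}(K\setminus(A\cup B))$ avoids $(x_{0}-\eta,x_{0}+\eta)$. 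Choosing $\delta>0$ so that for every $-\delta<\theta<0$ the gap is non-empty and contained in $(x_{0}-\eta,x_{0}+\eta)$ while $P_{\theta}(K\setminus(A\cup B))$ avoids that neighbourhood, the midpoint of the gap lies in no part of $P_{\theta}(K)=P_{\theta}(A)\cup P_{\theta}(B)\cup P_{\theta}(K\setminus(A\cup B))$, yet $\sup P_{\theta}(B)$ and $\inf P_{\theta}(A)$ are points of $P_{\theta}(K)$ on either side of it. Therefore $P_{\theta}(K)$ is not an interval and $P_{\theta}\notin IP(K)$, as required. I expect the main obstacle to be the localisation step of the second paragraph, namely proving cleanly that the leftmost point of $P_{\theta}(A)$ and the rightmost point of $P_{\theta}(B)$ are governed by the top of $F_{A}$ and the bottom of $F_{B}$ rather than by the global extent of $A$ and $B$; the naive bounds using the global top of $A$ and bottom of $B$ are too lossy and genuinely require the convexity and supporting-line input.
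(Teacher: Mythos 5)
Your proof is correct, but it takes a genuinely different route from the paper's. The paper argues synthetically: it takes a closest pair $h\in Conv(A)$, $g\in Conv(B)$, lets $L^{5}$ be the perpendicular bisector of $[g,h]$, so that by Lemma \ref{lem:convex perpendicular} the two convex hulls lie strictly on opposite sides of $L^{5}$, sets $p=L\cap L^{5}$, and shows that for every $\theta$ in the relevant one-sided range the single line through the \emph{fixed} point $p$ with direction $L_{\frac{\pi}{2}+\theta}$ misses $Conv(A)$, $Conv(B)$ and (via the bounding square and the hypothesis on $P_{0}(K\setminus(A\cup B))$) all of $K$, while still separating $A$ from $B$; hence $P_{\theta}(p)\in Conv(P_{\theta}(K))\setminus P_{\theta}(K)$. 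You instead do a first-order perturbation analysis of the support functions: the minimiser of $P_{\theta}$ on $Conv(A)$ is pinned to the top endpoint $(x_{0},\alpha_{2})$ of the contact segment $F_{A}$ and the maximiser on $Conv(B)$ to the bottom endpoint $(x_{0},\beta_{1})$ of $F_{B}$, so a gap of width $(\beta_{1}-\alpha_{2})\left|\sin\theta\right|+o(\theta)$ opens between the projections, localised near $x_{0}$, and uniform continuity of $\theta\mapsto P_{\theta}$ on the compact $K$ keeps $P_{\theta}(K\setminus(A\cup B))$ out of it. The one step you only sketch, $m_{\theta}\to(x_{0},\alpha_{2})$, does go through: comparing $P_{\theta}(m_{\theta})\leq P_{\theta}(x_{0},\alpha_{2})$ with $x_{\theta}\geq x_{0}$ and $\sin\theta<0$ forces $y_{\theta}\geq\alpha_{2}$ and $x_{\theta}-x_{0}=O(\left|\sin\theta\right|)$, so every limit point of $m_{\theta}$ lies in $F_{A}\cap\left\{ y\geq\alpha_{2}\right\} =\left\{ (x_{0},\alpha_{2})\right\} $; you are right that this localisation, not the naive global bound, is the crux. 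The paper's construction is shorter and avoids asymptotics by exhibiting one point whose fibre misses $K$ for all admissible $\theta$; yours is more quantitative, giving the actual width of the hole in $P_{\theta}(K)$ and making explicit where the hypothesis $y_{w}<y_{z}$ enters (through $\alpha_{2}<\beta_{1}$). The symmetry reductions of the remaining cases are the same in both arguments.
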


\begin{proof}
\textit{i)} Assume that the case, $A$ is to the right of $L$, holds
(the proof of the other case goes similarly to the proof of this case
or alternatively can be deduced from this case by rotating everything
around the origin by $\pi$). Since $P_{0}(A)$ and $P_{0}(B)$ are
intervals that intersect we have that $P_{0}(A\cup B)$ is an in interval,
so $P_{0}\in IP(A\cup B)$.

Since $A$ and $B$ are compact $Conv(A)$ and $Conv(B)$ are compact
sets. By assumption $Conv(A)\cap Conv(B)=\emptyset$. Hence $\alpha=\mathrm{dist}(Conv(A),Conv(B))>0$.
By compactness there exist $g\in B$, $h\in A$ such that $\left\Vert h-g\right\Vert =\alpha$.
Let $L^{5}$ be the perpendicular bisector of the line segment $[g,h]$
and let $\delta_{1}\in(0,\pi)$ such that $L_{\frac{\pi}{2}-\delta_{1}}$
is parallel to $L^{5}$ (note that we can choose such $\delta_{1}\in(0,\pi)$
since $L^{5}$ is not parallel to the $y$-axis as the convex sets
$Conv(A)$ and $Conv(B)$ both intersect $L$ but stay on different
sides of $L$). So $\left|L\cap L^{5}\right|=1$ and let $p\in\mathbb{R}^{2}$
be the point of intersection $L\cap L^{5}$. By Lemma \ref{lem:convex perpendicular}
$Conv(A)$ stays below $L^{5}$ and $Conv(B)$ stays above $L^{5}$.

Since $\left|P_{0}(A)\cap P_{0}(B)\right|=1$ the compact intervals
$P_{0}(A)$ and $P_{0}(B)$ intersect in one point and $P_{0}(A)$
is to the right of the intersection. Let $a<b<c$ on the line $\mathbb{R}\times\{0\}=\mathbb{R}$
such that $P_{0}(B)=[a,b]$ and $P_{0}(A)=[b,c]$ and hence $P_{0}(L)=\{b\}$.
Since $K$ is compact there exists $R>0$ such that $K$ is contained
in the square $[-R,R]\times[-R,R]$. In particular, $A\cup B$ is
contained in the rectangle $[a,c]\times[-R,R]$. Let $\delta_{2}\in(0,\pi)$
be such that for every $-\delta_{2}<\theta<0$ the line through $p$,
that is parallel to $L_{\frac{\pi}{2}+\theta}$, intersects both $[a,b]\times\{-R\}$
and $[b,c]\times\{R\}$.

Let $\delta=\min\{\delta_{1},\delta_{2}\}$. We claim that $P_{\theta}\notin IP(K)$
for every $-\delta<\theta<0$. Let $-\delta<\theta<0$ be arbitrary
and let $L^{6}$ be the line through $p$ that is parallel to $L_{\frac{\pi}{2}+\theta}$.
By the choice of $\delta_{1}$ the line $L^{6}$ avoids $Conv(A)$
and $Conv(B)$ and hence avoids $A$ and $B$ (see Figure 3). By the
assumption, that $P_{0}(K\setminus(A\cup B))$ and the interior of
the interval $P_{0}(A\cup B)$ are disjoint, it follows that $K\setminus(A\cup B)$
does not intersect $(a,c)\times\mathbb{R}$ but by the choice of $\delta_{2}$
we have that $L^{6}$ intersects $[-R,R]\times[-R,R]$ only inside
$(a,c)\times\mathbb{R}$ (see Figure 4). Hence by the choice of $R$
we have that $K\setminus(A\cup B)$ does not intersect $L^{6}$. So
$L^{6}$ does not intersect $A$, $B$ and $K\setminus(A\cup B)$
thus $L^{6}$ does not intersect $K$. Hence $P_{\theta}(p)\notin P_{\theta}(K)$.
But since $Conv(A)$ stays below $L^{6}$ and $Conv(B)$ stays above
$L^{6}$ and so $A$ stays below $L^{6}$ and $B$ stays above $L^{6}$
we have that $P_{\theta}(p)\in Conv(P_{\theta}(K))$. Thus $P_{\theta}(K)$
is not an interval.

The proof of \textit{ii)} goes similarly to the proof of \textit{i)}
or alternatively \textit{ii)} can be deduced from \textit{i)} by reflecting
everything in the $y$-axis.
\end{proof}
\includegraphics[scale=0.4]{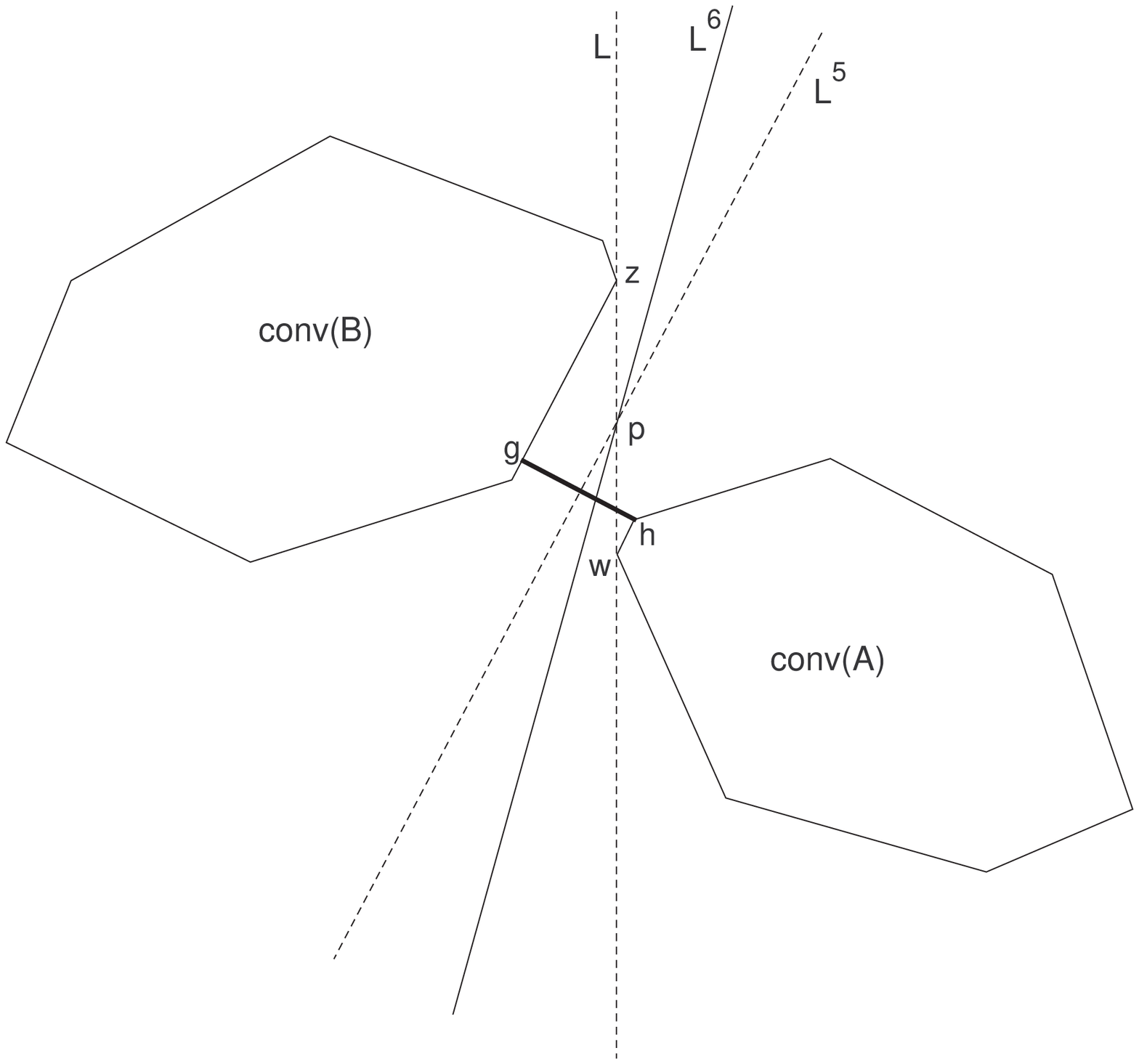}~~~~~~~~~~\includegraphics[scale=0.45]{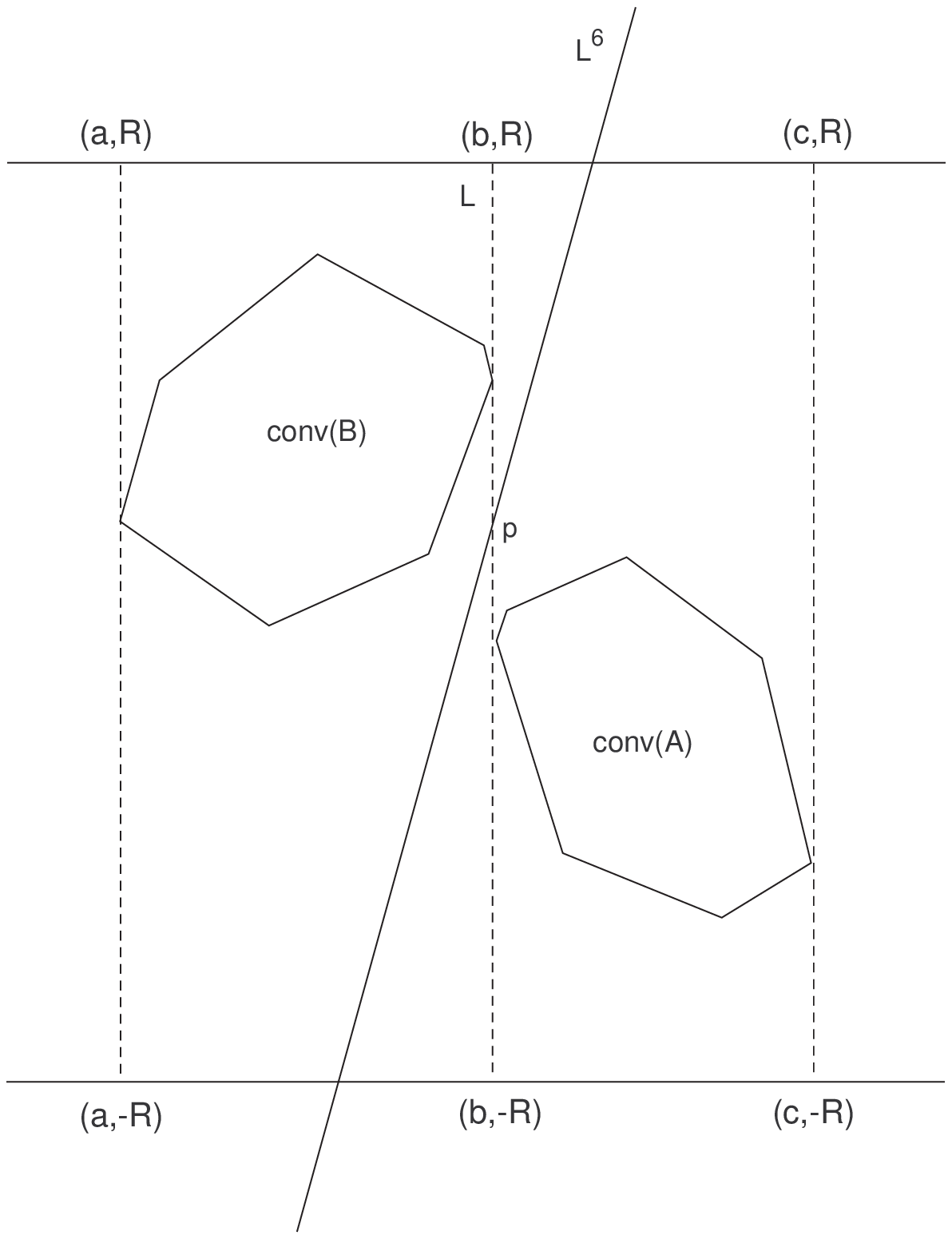}

~~~~~~~~~~~~~~~~~~~~~~~~~Figure 3.~~~~~~~~~~~~~~~~~~~~~~~~~~~~~~~~~~~~~~~~~~~~~~~~~~~~~~~~Figure
4.
\begin{lem}
\label{lem: Key lemma}Let $K\subseteq\mathbb{R}^{2}$ be a compact
set. Let $A\subseteq K$ be a compact subset such that $IP(K)\subseteq IP(A)$,
$P_{0}\in IP(A)$ and $\mathrm{diam}(P_{0}(A))>0$. Let $B\subseteq K$
be a compact subset such that for every $P_{\theta}\in IP(K)$ we
have that $\mathcal{H}^{1}\left(P_{\theta}(A)\cap P_{\theta}(B)\right)=0$,
for every open set $U$ that intersects $B$ and for every $P_{\theta}\in IP(K)$
we have that $0<\mathcal{H}^{1}(P_{\theta}(B\cap U))$, $P_{0}\in IP(B)$,
$\mathrm{diam}(P_{0}(B))>0$, $\left|P_{0}(A)\cap P_{0}(B)\right|=1$
and $A\cap B=\emptyset$. Then $P_{0}\in IP(A\cup B)$. Assume that
$P_{0}(K\setminus(A\cup B))$ and the interior of the interval $P_{0}(A\cup B)$
are disjoint. Then there exists $\delta>0$ such that for every $\theta\in(-\delta,\delta)\setminus\{0\}$
we have that $P_{\theta}\notin IP(K)$.
\end{lem}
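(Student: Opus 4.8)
The plan is to reduce the statement to an explicit coordinate picture on the vertical line through the single overlap point of $P_{0}(A)$ and $P_{0}(B)$, and then feed the two possible local configurations into Lemma \ref{lem: Hosszu lemma 1} and Lemma \ref{lem: Hosszu lemma 2} in order to eliminate small positive and small negative angles separately. First I would dispose of the claim $P_{0}\in IP(A\cup B)$: since $P_{0}\in IP(A)\cap IP(B)$, the sets $P_{0}(A)$ and $P_{0}(B)$ are intervals of positive length meeting in exactly one point, so $P_{0}(A\cup B)=P_{0}(A)\cup P_{0}(B)$ is again an interval. I then identify $L_{0}$ with the $x$-axis, so that $P_{0}$ is the $x$-coordinate projection, and (reflecting in the $y$-axis if necessary, which only interchanges the roles of positive and negative $\theta$ and preserves every hypothesis) I assume $A$ lies to the right, writing $P_{0}(A)=[b,c]$ and $P_{0}(B)=[a,b]$ with $a<b<c$. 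Let $L=\{x=b\}$. Then $A\subseteq\{x\geq b\}$ and $B\subseteq\{x\leq b\}$, and both $A\cap L$ and $B\cap L$ are nonempty; write $I_{A}$ and $I_{B}$ for their vertical extents $[y_{A}^{-},y_{A}^{+}]$ and $[y_{B}^{-},y_{B}^{+}]$ on $L$.

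The key structural observation is the following dichotomy. Because $A\subseteq\{x\geq b\}$, a point $(b,y)$ can lie in $Conv(A)$ only as a convex combination of points of $A$ with $x$-coordinate equal to $b$, so $Conv(A)\cap L$ is exactly the segment $I_{A}$, and likewise $Conv(B)\cap L=I_{B}$. Since $Conv(A)\subseteq\{x\geq b\}$ and $Conv(B)\subseteq\{x\leq b\}$ can only meet on $L$, we obtain $Conv(A)\cap Conv(B)=\emptyset$ if and only if $I_{A}\cap I_{B}=\emptyset$. Moreover $I_{A}\cap I_{B}$ cannot consist of a single point: such a point would be simultaneously the top of one extent and the bottom of the other, hence a common point of $A$ and $B$, contradicting $A\cap B=\emptyset$. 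So exactly one of two situations occurs: either $I_{A}\cap I_{B}$ is a nondegenerate segment, or $I_{A}$ and $I_{B}$ are disjoint.

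In the nondegenerate case there exist both a point of $A\cap L$ strictly below a point of $B\cap L$ and a point of $A\cap L$ strictly above a point of $B\cap L$, so I apply Lemma \ref{lem: Hosszu lemma 1} with $L^{1}=L$ and $L^{2}=\{x=c\}$: part i) (from the first configuration) eliminates small $\theta>0$, and part ii) (from the second) eliminates small $\theta<0$. In the disjoint case $Conv(A)\cap Conv(B)=\emptyset$, so Lemma \ref{lem: Hosszu lemma 2} becomes available while only one configuration now occurs; if $I_{A}$ lies entirely below $I_{B}$, then every point of $A\cap L$ is below every point of $B\cap L$, and Lemma \ref{lem: Hosszu lemma 1} i) eliminates small $\theta>0$ while Lemma \ref{lem: Hosszu lemma 2} i) eliminates small $\theta<0$; the case $I_{A}$ above $I_{B}$ is symmetric, using parts ii) of both lemmas. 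Taking $\delta$ to be the minimum of the finitely many thresholds produced then gives $P_{\theta}\notin IP(K)$ for all $\theta\in(-\delta,\delta)\setminus\{0\}$.

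The main obstacle is precisely that $A\cap B=\emptyset$ does \emph{not} force $Conv(A)\cap Conv(B)=\emptyset$, so Lemma \ref{lem: Hosszu lemma 2} cannot be invoked directly and the overlap of the convex hulls must be analysed. The point that makes the argument close is that this overlap is controlled entirely by the one-dimensional extents $I_{A},I_{B}$ on $L$, and that the borderline single-point overlap is ruled out by $A\cap B=\emptyset$; this is what allows the two lemmas to cover both signs of $\theta$ in every case, with Lemma \ref{lem: Hosszu lemma 1} supplying the sign opposite to the one Lemma \ref{lem: Hosszu lemma 2} gives for the same configuration. The remaining work is to confirm that the density and interval hypotheses of Lemma \ref{lem: Hosszu lemma 1} and Lemma \ref{lem: Hosszu lemma 2} (the conditions $\mathcal{H}^{1}(P_{\theta}(A)\cap P_{\theta}(B))=0$, positivity of $\mathcal{H}^{1}(P_{\theta}(B\cap U))$, and the disjointness of $P_{0}(K\setminus(A\cup B))$ from the interior of $P_{0}(A\cup B)$) are all inherited verbatim from those of the Key Lemma, which is routine.
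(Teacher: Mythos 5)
Your proposal is correct and follows essentially the same route as the paper: both arguments pass to the vertical line $L$ through the single overlap point, form the convex hulls $I_A$, $I_B$ of the $y$-coordinates of $A\cap L$ and $B\cap L$, use $A\cap B=\emptyset$ to rule out a single-point overlap of these intervals, and then in the overlapping case apply parts i) and ii) of Lemma \ref{lem: Hosszu lemma 1}, while in the disjoint case combine Lemma \ref{lem: Hosszu lemma 1} with Lemma \ref{lem: Hosszu lemma 2} to kill both signs of small $\theta$. Your explicit verification that $Conv(A)\cap Conv(B)=\emptyset$ iff $I_A\cap I_B=\emptyset$ is a slightly more detailed justification of a step the paper states briefly, but the substance is identical.
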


\begin{proof}
Since $P_{0}(A)$ and $P_{0}(B)$ are intervals that intersect it
follows that $P_{0}(A\cup B)$ is an in interval and so $P_{0}\in IP(A\cup B)$.
Let $L$ be the line through $P_{0}(A)\cap P_{0}(B)$ that is parallel
to the $y$-axis. Assume that the case that $A$ is to the right of
$L$ holds (the proof of the other case goes similarly to the proof
of this case or alternatively can be deduced from this case by rotating
everything around the origin by $\pi$). Let $E_{A}=\left\{ y\in\mathbb{R}:\exists x\in\mathbb{R},(x,y)\in A\cap L\right\} $
be the set of the $y$-coordinates of the points of $A\cap L$ and
let $E_{B}=\left\{ y\in\mathbb{R}:\exists x\in\mathbb{R},(x,y)\in B\cap L\right\} $
be the set of the $y$-coordinates of the points of $B\cap L$. Let
$I_{A}=Conv(E_{A})$ and $I_{B}=Conv(E_{B})$. Since $A\cap B=\emptyset$
we have that $E_{A}$ and $E_{B}$ are disjoint compact sets. Hence
either $I_{A}$ and $I_{B}$ are disjoint intervals or $I_{A}$ and
$I_{B}$ are overlapping intervals, i.e. the interior of $I_{A}\cap I_{B}$
is nonempty. We will consider these two cases in the proof.

\textit{Case 1}: $I_{A}$ and $I_{B}$ are disjoint intervals

\textit{Case 2}: $I_{A}$ and $I_{B}$ are overlapping intervals

\textit{Case 1}: Assume that the case $\sup I_{A}<\inf I_{B}$ holds
(the proof of the other case, $\inf I_{A}>\sup I_{B}$, goes similarly
to the proof of this case or alternatively can be deduced from this
case by reflecting everything in the $x$-axis). Since $\sup I_{A}<\inf I_{B}$
with the notation $L=L^{1}$ we have that $B$ contains a point $z$
of $L^{1}$ such that there exists $w\in A\cap L^{1}$ with $y$-coordinate
smaller than the $y$-coordinate of $z$. So the conditions of Lemma
\ref{lem: Hosszu lemma 1} \textit{i)} are satisfied. Since $P_{0}(A)$
and $P_{0}(B)$ are non-overlapping intervals and $I_{A}$ and $I_{B}$
are disjoint intervals it follows that $Conv(A)\cap Conv(B)=\emptyset$.
Again since $\sup I_{A}<\inf I_{B}$ it follows that there exist a
$w\in A\cap L$ and $z\in B\cap L$ such that $y$-coordinate of $w$
is smaller than the $y$-coordinate of $z$. We assumed that $A$
is to the right of $L$. So the conditions of Lemma \ref{lem: Hosszu lemma 2}
\textit{i)} are satisfied. So the conditions of both Lemma \ref{lem: Hosszu lemma 1}
\textit{i)} and Lemma \ref{lem: Hosszu lemma 2} \textit{i)} are satisfied.
Hence there exists $\delta>0$ such that $P_{\theta}\notin IP(K)$
for every $\theta\in(-\delta,\delta)\setminus\{0\}$.

\textit{Case 2}: Since $I_{A}$ and $I_{B}$ are overlapping intervals
with the notation $L=L^{1}$ we have that $B$ contains a point $z_{1}$
of $L^{1}$ such that there exists $w_{1}\in A\cap L^{1}$ with $y$-coordinate
smaller than the $y$-coordinate of $z_{1}$, and that $B$ contains
a point $z_{2}$ of $L^{1}$ such that there exists $w_{2}\in A\cap L^{1}$
with $y$-coordinate greater than the $y$-coordinate of $z_{2}$.
So the conditions of Lemma \ref{lem: Hosszu lemma 1} \textit{i)}
are satisfied with $z=z_{1}$ and the conditions of Lemma \ref{lem: Hosszu lemma 1}
\textit{ii)} are satisfied with $z=z_{2}$. Hence there exists $\delta>0$
such that $P_{\theta}\notin IP(K)$ for every $\theta\in(-\delta,\delta)\setminus\{0\}$.
\end{proof}

\begin{lem}
\label{lem: Compact IP lem}Let $K\subseteq\mathbb{R}^{2}$ be compact.
Then $IP(K)$ is compact.
\end{lem}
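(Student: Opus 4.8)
The plan is to deduce compactness of $IP(K)$ from compactness of the ambient space $\prod_{2,1}$. Since it is already noted that $\prod_{2,1}$ is compact and $IP(K)\subseteq\prod_{2,1}$, it suffices to prove that $IP(K)$ is a \emph{closed} subset of $\prod_{2,1}$, as a closed subset of a compact space is compact. To this end I would take an arbitrary sequence $P_{\theta_{n}}\in IP(K)$ converging to some $P_{\theta}\in\prod_{2,1}$ and show that $P_{\theta}\in IP(K)$, that is, that $P_{\theta}(K)$ is an interval. Using the homeomorphism $\nicefrac{\mathbb{R}}{\pi\mathbb{Z}}\longrightarrow\prod_{2,1}$, $\theta\mapsto P_{\theta}$, I may pass to representatives in $\mathbb{R}$ with $\theta_{n}\to\theta$; this is harmless, since replacing $\theta$ by $\theta+\pi$ only changes $P_{\theta}$ by a sign under the $M=\mathbb{R}$ identification and hence preserves the property of being an interval.

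The key analytic input is uniform convergence of the projections on $K$. Writing $P_{\theta}(x,y)=x\cos\theta+y\sin\theta$ and using that $K$ is bounded, say $K\subseteq[-R,R]^{2}$, I would bound $\sup_{(x,y)\in K}\left|P_{\theta_{n}}(x,y)-P_{\theta}(x,y)\right|\leq 2R\left(\left|\cos\theta_{n}-\cos\theta\right|+\left|\sin\theta_{n}-\sin\theta\right|\right)=:\varepsilon_{n}$, so that $\varepsilon_{n}\to0$. Equivalently, the compact sets $P_{\theta_{n}}(K)$ converge to $P_{\theta}(K)$ in the Hausdorff metric, and $P_{\theta}(K)$ is itself compact, being the continuous image of a compact set.

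It then remains to show that this Hausdorff limit of compact intervals is an interval. I would argue by contradiction: if $P_{\theta}(K)$ is not an interval then, being compact with $a=\min P_{\theta}(K)<\max P_{\theta}(K)=b$, it has a \emph{gap}, i.e. a maximal open interval $(c,d)$ with $a\leq c<d\leq b$, $c,d\in P_{\theta}(K)$ and $(c,d)\cap P_{\theta}(K)=\emptyset$. For $n$ large enough that $\varepsilon_{n}<(d-c)/2$, the $\varepsilon_{n}$-closeness forces $P_{\theta_{n}}(K)\cap(c+\varepsilon_{n},d-\varepsilon_{n})=\emptyset$ (if $P_{\theta_{n}}(x)$ lay in this interval then $P_{\theta}(x)\in(c,d)$), while choosing $x_{1},x_{2}\in K$ with $P_{\theta}(x_{1})=a$ and $P_{\theta}(x_{2})=b$ produces points $P_{\theta_{n}}(x_{1})\leq c+\varepsilon_{n}$ and $P_{\theta_{n}}(x_{2})\geq d-\varepsilon_{n}$ of $P_{\theta_{n}}(K)$ on the two sides of this nonempty gap. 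Hence $P_{\theta_{n}}(K)$ is disconnected, contradicting $P_{\theta_{n}}\in IP(K)$. Therefore $P_{\theta}(K)$ is an interval, $IP(K)$ is closed, and as a closed subset of the compact space $\prod_{2,1}$ it is compact.

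I expect the only genuine work to be the bookkeeping in the last step: verifying that the gap in $P_{\theta}(K)$ transfers to a true gap in $P_{\theta_{n}}(K)$ with points of $P_{\theta_{n}}(K)$ strictly on both sides. The degenerate case where $P_{\theta}(K)$ is a single point needs no argument, since a singleton is an interval by Definition \ref{Def: Int-Proj}; and the compactness of $\prod_{2,1}$ together with the uniform-convergence estimate are routine.
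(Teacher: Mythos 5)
Your proposal is correct and follows essentially the same route as the paper: both reduce the statement to showing that $IP(K)$ is closed in the compact space $\prod_{2,1}$, and both do so by observing that a gap in a non-interval projection $P_{\theta}(K)$ of a bounded set persists under a sufficiently small perturbation of the angle. The paper phrases this as openness of the complement (exhibiting a line near the gap that misses $K$ yet separates it), while you phrase it as sequential closedness via the uniform estimate $\sup_{K}\left|P_{\theta_{n}}-P_{\theta}\right|\to0$; the quantitative content is the same.
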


\begin{proof}
Since $\prod_{2,1}$ is compact it is enough to show that $IP(K)$
is closed i.e. $\prod_{2,1}\setminus IP(K)$ is open. Let $P_{\theta}\in\prod_{2,1}\setminus IP(K)$.
We need to show that a neighborhood of $P_{\theta}$ is contained
in $\prod_{2,1}\setminus IP(K)$. Without the loss of generality we
can assume that $\theta=0$ otherwise we can rotate everything around
the origin by $-\theta$. Since $P_{0}$ is not an interval projection
of $K$ it follows that $P_{0}(K)$ is not an interval. Let $x\in Conv(P_{0}(K))\setminus P_{0}(K)$.
Since $K$ is compact it follows that $P_{0}(K)$ is compact as well
and hence there exists $r>0$ such that the interval $[x-r,x+r]$
is contained in $Conv(P_{0}(K))\setminus P_{0}(K)$. Since $K$ is
compact there exists $R>0$ such that $K$ is contained in the square
$[-R,R]\times[-R,R]$. Let $y=\inf P_{0}(K)$ and $z=\sup P_{0}(K)$.
Then $K$ is contained in $[y,x-r)\times[-R,R]\cup(x+r,z]\times[-R,R]$
and both components intersect $K$. Let $\delta>0$ be such that the
line, that connects the points $(x-r,R)$ and $(x+r,-R)$, is parallel
to $L_{\frac{\pi}{2}+\delta}$. Then for every $\varphi\in(-\delta,\delta)$
the line, that goes through the point $(x,0)$ and is parallel to
$L_{\frac{\pi}{2}+\varphi}$, does not intersect $K$ and separates
$K$ into two non-empty part. Hence $P_{\varphi}(K)$ is not an interval
for every $\varphi\in(-\delta,\delta)$.
\end{proof}

\section{Interval projections of self-similar sets\label{sec:Interval-projections-of}}

In this section we prove the main results of the paper, Theorem \ref{thm:SS int-proj SSC}
and Theorem \ref{thm:SS int-proj OSC}. Since $IP(K)$ is compact
by Lemma \ref{lem: Compact IP lem} it is enough to prove that every
element of $IP(K)$ is isolated. We proceed by finding subsets $A,B\subseteq K$
that satisfies the assumption of Lemma \ref{lem: Key lemma}.

In this section let $\left\{ S_{i}\right\} _{i=1}^{m}$ be a SS-IFS
in $\mathbb{R}^{2}$ with attractor $K$ and $S_{i}$ be decomposed
as in (\ref{eq:S_ideconposition}) and we denote $\max\left\{ r_{i}\right\} _{i=1}^{m}<1$
by $r_{max}$. We further assume that $\dim_{H}(K)=1$.

\begin{lem}
\label{lem: SS density lem}If $\mathcal{H}^{1}(P_{\theta}(K))>0$
then for every open set $U$ that intersects $K$ we have that $0<\mathcal{H}^{1}(P_{\theta}(K\cap U))$.
\end{lem}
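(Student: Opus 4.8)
The plan is to reduce the statement to a single application of Theorem~\ref{thm:vet meaasure SS}, which identifies the projected measure with a constant multiple of Lebesgue measure. The underlying idea is that an open set $U$ meeting $K$ always contains a whole cylinder $K_{\mathbf{i}}$, which carries positive $\mathcal{H}^{1}$-mass, and that this mass must survive under $P_{\theta}$ precisely because the push-forward $P_{\theta}^{*}(\mathcal{H}^{1}|_{K})$ is (a multiple of) Lebesgue measure on $P_{\theta}(K)$ and hence charges every nondegenerate portion of it.

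First I would verify that $K$ is a genuine $1$-set, so that Theorem~\ref{thm:vet meaasure SS} applies. Since $P_{\theta}$ is $1$-Lipschitz we have $\mathcal{H}^{1}(K)\ge\mathcal{H}^{1}(P_{\theta}(K))>0$, and $\mathcal{H}^{1}(K)<\infty$ by the implicit-methods argument recalled after Proposition~\ref{prop: Tangent plane prop}. Writing $\mu=\mathcal{H}^{1}|_{K}$ and $\lambda=\mathcal{H}^{1}(K)/\mathcal{H}^{1}(P_{\theta}(K))>0$, Theorem~\ref{thm:vet meaasure SS} then gives $P_{\theta}^{*}\mu=\lambda\,\mathcal{H}^{1}|_{P_{\theta}(K)}$.

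Next, given an open $U$ with $U\cap K\ne\emptyset$, pick $a\in U\cap K$; since $\mathrm{diam}(K_{\mathbf{i}})=r_{\mathbf{i}}\,\mathrm{diam}(K)\to 0$ along any address of $a$, some finite word $\mathbf{i}$ satisfies $a\in K_{\mathbf{i}}\subseteq U$. As $S_{\mathbf{i}}$ is a similarity of ratio $r_{\mathbf{i}}$ we get $\mu(K\cap U)\ge\mathcal{H}^{1}(K_{\mathbf{i}})=r_{\mathbf{i}}\,\mathcal{H}^{1}(K)>0$. Finally, with $E=P_{\theta}(K\cap U)\subseteq P_{\theta}(K)$ I would compute
\[
\lambda\,\mathcal{H}^{1}\big(P_{\theta}(K\cap U)\big)=\lambda\,\mathcal{H}^{1}|_{P_{\theta}(K)}(E)=P_{\theta}^{*}\mu(E)=\mu\big(P_{\theta}^{-1}(E)\big)\ge\mu(K\cap U)>0,
\]
using $K\cap U\subseteq P_{\theta}^{-1}(E)$; dividing by $\lambda>0$ finishes the proof.

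The only real content here is the invocation of Theorem~\ref{thm:vet meaasure SS}, so the ``obstacle'' is essentially bookkeeping: making sure $K$ qualifies as a $1$-set and that the identification $L_{\theta}\cong\mathbb{R}$ is used consistently when transporting $\mu$. If one preferred not to lean on Theorem~\ref{thm:vet meaasure SS}, an alternative is to argue with the cylinder directly: $P_{\theta}(K_{\mathbf{i}})$ is, up to translation and scaling by $r_{\mathbf{i}}$, a copy of $P_{\psi}(K)$ where $L_{\psi}=T_{\mathbf{i}}^{-1}(L_{\theta})$, and one then shows that the set of directions $\psi$ with $\mathcal{H}^{1}(P_{\psi}(K))>0$ is invariant under the group $\mathcal{T}$ (forward invariance is immediate from $P_{T_{i}\psi}\circ S_{i}=r_{i}P_{\psi}+\text{const}$ together with $S_{i}(K)\subseteq K$, and $|\mathcal{T}|<\infty$ by Theorem~\ref{thm:infinite T no IP}, which upgrades the generated semigroup to the full group). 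This second route is longer, and the passage from forward invariance to full $\mathcal{T}$-invariance is the step I would expect to require the most care.
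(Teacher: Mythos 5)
Your proposal is correct and follows essentially the same route as the paper: locate a cylinder $K_{\mathbf{i}}\subseteq K\cap U$ and then invoke Theorem~\ref{thm:vet meaasure SS} (i.e.\ \cite[Theorem 1.3]{linim}) to see that its projection carries positive $\mathcal{H}^{1}$-measure. The only cosmetic difference is that you run the final step through the push-forward identity $P_{\theta}^{*}\mu(E)=\mu(P_{\theta}^{-1}(E))$ rather than quoting the resulting scaling formula for $\mathcal{H}^{1}(P_{\theta}(K_{\mathbf{i}}))$ directly.
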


\begin{proof}
Let $P_{\theta}\in\prod_{2,1}$ such that $\mathcal{H}^{1}(P_{\theta}(K))>0$,
let $U$ be an open set that intersects $K$ and $x\in K\cap U$.
Then there exists a ball $B$, centered at $x$ with radius $r_{U}>0$,
such that $B\subseteq U$. There exists a cylinder set $K_{\boldsymbol{\mathbf{i}}}\subseteq K\cap U$
for large enough $k$ and for some $\boldsymbol{\mathbf{i}}\in\mathcal{I}^{k}$.
By \cite[Theorem 1.3]{linim} we have that
\[
\mathcal{H}^{1}(P_{\theta}(K\cap U))\geq\mathcal{H}^{1}\left(P_{\theta}(K_{\boldsymbol{\mathbf{i}}})\right)=\frac{\mathcal{H}^{1}(K_{\boldsymbol{\mathbf{i}}})}{\mathcal{H}^{1}(K)}\cdot\mathcal{H}^{1}\left(P_{\theta}(K)\right)=r_{\boldsymbol{\mathbf{i}}}^{k}\cdot\mathcal{H}^{1}\left(P_{\theta}(K)\right)>0.
\]
\end{proof}

\begin{lem}
\label{lem: SS Almost disjoint proj}Let $A,B\subseteq K$ be disjoint
compact subsets. Then $\mathcal{H}^{1}\left(P_{\theta}(A)\cap P_{\theta}(B)\right)=0$
for every $P_{\theta}\in\prod_{2,1}$.
\end{lem}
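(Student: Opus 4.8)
The plan is to fix the direction $P_\theta$ and reduce to a single clean dichotomy coming from Corollary \ref{cor: manifold intersection cor}, and then to show that in the only nontrivial case the map $P_\theta$ is injective on $K$ off a set whose image is $\mathcal{H}^1$-null; two disjoint compact sets can then meet, after projection, only in the image of that bad set. First I would dispose of the trivial case: if $\mathcal{H}^1(P_\theta(K))=0$ then $P_\theta(A)\cap P_\theta(B)\subseteq P_\theta(K)$ already has measure zero, so I may assume $\mathcal{H}^1(P_\theta(K))>0$. Applying Corollary \ref{cor: manifold intersection cor} with $s=l=1$ gives two alternatives. If $K\subseteq M$ for a line $M$, then $\mathcal{H}^1(P_\theta(K))>0$ forces $M$ to be non-perpendicular to $L_\theta$, so $P_\theta|_M$ is an affine bijection onto $L_\theta$; being injective, it sends the disjoint sets $A$ and $B$ to disjoint images, whence $P_\theta(A)\cap P_\theta(B)=\emptyset$. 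All the content therefore lies in the second alternative, where $\mathcal{H}^1(K\cap\Gamma)=0$ for every $C^1$ curve $\Gamma$, i.e. $K$ is purely $1$-unrectifiable.

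In that situation I would invoke Theorem \ref{thm:vet meaasure SS}: writing $\mu=\mathcal{H}^1|_K$, the assumption $\mathcal{H}^1(P_\theta(K))>0$ gives $P_\theta^*\mu=c\,\mathcal{H}^1|_{P_\theta(K)}$ with $c=\mathcal{H}^1(K)/\mathcal{H}^1(P_\theta(K))$, so $\mu$ pushes forward to a constant multiple of Lebesgue measure. The key step I would establish is that $P_\theta$ is \emph{essentially injective} on $K$, meaning the fibre $K\cap P_\theta^{-1}(x)$ is a single point for $\mathcal{H}^1$-almost every $x\in P_\theta(K)$. Granting this, observe that any $x\in F:=P_\theta(A)\cap P_\theta(B)$ has, by definition, points $a\in A$ and $b\in B$ with $P_\theta(a)=P_\theta(b)=x$; since $A\cap B=\emptyset$ these are distinct, so the fibre over $x$ contains at least two points. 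Hence $F$ is contained in the $\mathcal{H}^1$-null set of multiple-point fibres, and $\mathcal{H}^1(F)=0$, which is exactly the claim.

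To obtain essential injectivity I would exploit self-similarity against the identity $P_\theta^*\mu=c\,\mathrm{Leb}|_{P_\theta(K)}$. Decomposing $\mu=\sum_i r_i (S_i)_*\mu$ and using $P_\theta\circ S_i(x)=r_iP_{\theta-\alpha_i}(x)+b_i$, where $\alpha_i$ is the rotation angle of $T_i$ and $b_i=P_\theta(v_i)$, one computes $r_i\,(P_\theta\circ S_i)_*\mu=c\,\mathrm{Leb}|_{P_\theta(K_i)}$ in the homothety case $\alpha_i=0$ (since then $P_{\theta-\alpha_i}^*\mu=P_\theta^*\mu$). Summing and comparing with $P_\theta^*\mu=c\,\mathrm{Leb}|_{P_\theta(K)}$ yields the tiling identity $\sum_i\mathbf{1}_{P_\theta(K_i)}=\mathbf{1}_{P_\theta(K)}$ almost everywhere, i.e. the images $P_\theta(K_i)$ overlap only in a null set. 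Iterating this over all cylinders $K_{\mathbf i}$ forces almost every point of $P_\theta(K)$ to have a unique symbolic address, and hence a single preimage in $K$, which is precisely essential injectivity.

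The hard part is making this tiling/injectivity step rigorous \textbf{without any separation hypothesis and in the presence of rotations}. Without separation the $K_i$ may overlap, so one first needs $\mathcal{H}^1(K_i\cap K_j)=0$ in order to write $\mu=\sum_i r_i(S_i)_*\mu$ as an identity of measures; I expect this to follow from the $1$-set property together with the equality case of subadditivity forced by $P_\theta^*\mu=c\,\mathrm{Leb}$. Rotations are the more serious obstacle, since the push-forward then couples $\theta$ with the rotated directions $\theta-\alpha_i$, whose projected measures need not be comparable to that in direction $\theta$. Here I would first discard $|\mathcal{T}|=\infty$ using Theorem \ref{thm:infinite T no IP} (all projections are null, so there is nothing to prove), and for finite $\mathcal{T}$ solve the constant-density relations simultaneously on the finite $\mathcal{T}$-orbit of $\theta$. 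Finally, it is worth noting that the Besicovitch--Federer projection theorem already gives $\mathcal{H}^1(P_\theta(K))=0$ for $\mathcal{H}^1$-almost every direction once $K$ is purely $1$-unrectifiable, so only the exceptional positive-projection directions actually require the injectivity argument above.
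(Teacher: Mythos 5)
The paper does not actually prove this lemma: it is quoted from \cite[Corollary 1.4]{linim}, so the only fair comparison is between your sketch and a complete self-contained argument. Your reduction and the easy cases are fine (the $\mathcal{H}^{1}(P_{\theta}(K))=0$ case, and the case $K\subseteq M$ a line, where injectivity of $P_{\theta}|_{M}$ gives $P_{\theta}(A)\cap P_{\theta}(B)=P_{\theta}(A\cap B)=\emptyset$). But the ``key step'' you isolate --- essential injectivity of $P_{\theta}$ on $K$ --- is not a reduction at all: it is equivalent to the lemma. Indeed, the set of $x$ with a multi-point fibre is the countable union, over pairs of disjoint closed dyadic squares $(Q,Q')$, of $P_{\theta}(K\cap Q)\cap P_{\theta}(K\cap Q')$, so proving essential injectivity is exactly proving the lemma for all such pairs. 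All of the content is therefore in your third paragraph, which you yourself flag as ``the hard part,'' and which contains two genuine gaps.

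First, the identity $\mu=\sum_{i}r_{i}(S_{i})_{*}\mu$, i.e. $\mathcal{H}^{1}(K_{i}\cap K_{j})=0$ for $i\neq j$, does not follow from ``the equality case of subadditivity.'' That argument needs $\sum_{i}r_{i}^{s}=1$ with $s=\dim_{H}K=1$; without any separation condition the similarity dimension may strictly exceed $1$ while $K$ is still a $1$-set, in which case $\mathcal{H}^{1}(K)\leq\sum_{i}r_{i}\,\mathcal{H}^{1}(K)$ is vacuous and yields no control on the overlaps. Overcoming precisely this is the main technical point of \cite{linim}, so you cannot assume it. Second, the lemma is stated (and used, in the proof of Theorem \ref{thm:SS int-proj SSC}) for general SS-IFSs with rotations, but your push-forward computation $r_{i}(P_{\theta}\circ S_{i})_{*}\mu=c\,\mathrm{Leb}|_{P_{\theta}(K_{i})}$ is derived only when $T_{i}$ is trivial; ``solving the constant-density relations simultaneously on the $\mathcal{T}$-orbit of $\theta$'' is a plan, not an argument, and it is not clear how to close it, since Theorem \ref{thm:vet meaasure SS} gives a single constant per direction and relating the constants across the orbit is nontrivial. (The closing appeal to Besicovitch--Federer does not help either, since the lemma must hold for the exceptional directions with $\mathcal{H}^{1}(P_{\theta}(K))>0$, which are exactly the ones that matter elsewhere in the paper.) As it stands the proposal is an honest strategy outline whose decisive steps are conjectured rather than proved; the correct course here is simply to cite \cite[Corollary 1.4]{linim}, as the paper does.
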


For details of the proof see \cite[Corolllary 1.4]{linim}.\\

\noindent \textit{Proof of Theorem \ref{thm:SS int-proj SSC}.} We
need to show that $IP(K)$ is finite. Since $K$ is compact, $IP(K)$
is a compact subset of $\prod_{2,1}$ by Lemma \ref{lem: Compact IP lem}.
Thus it is enough to show that every $P_{\theta}\in IP(K)$ is isolated
in $IP(K)$. If $IP(K)=\emptyset$ the proof is trivial, so we assume
that $IP(K)\neq\emptyset$. Hence by Theorem \ref{thm:infinite T no IP}
we have that $\left|\mathcal{T}\right|<\infty$. If $K$ is contained
in a line, that is parallel to $L_{\frac{\pi}{2}+\gamma}$ for some
$\gamma\in[0,\pi)$, then $\left|IP(K)\right|=1$ because $\left\{ S_{i}\right\} _{i=1}^{m}$
satisfies the SSC and so the only interval projection of $K$ is $P_{\gamma}$.
So we assume that $q=\left|\mathcal{T}\right|<\infty$ and $K$ is
not contained in any line.

\noindent Let $P_{\theta}\in IP(K)$ be arbitrary. We need to show
that there exists $\delta>0$ such that $P_{\theta+\varphi}\notin IP(K)$
for every $\varphi\in(-\delta,\delta)\setminus\{0\}$. Without the
loss of generality we can assume that $\theta=0$ otherwise we can
rotate everything around the origin by $-\theta$. Let $i_{1}\in\mathcal{I}$
be arbitrary and $\boldsymbol{\mathbf{i}}=\left(i_{1},\ldots,i_{1}\right)\in\mathcal{I}^{2\cdot q}$.
Then $T_{\boldsymbol{\mathbf{i}}}=T_{i_{1}}^{2\cdot q}$ is the the
identity map because $q=\left|\mathcal{T}\right|<\infty$. So $K_{\boldsymbol{\mathbf{i}}}=r_{\boldsymbol{\mathbf{i}}}\cdot K+t_{\boldsymbol{\mathbf{i}}}$
and hence $IP(K_{\boldsymbol{\mathbf{i}}})=IP(K)$. Since $P_{0}\in IP(K)$
let
\[
P_{0}(K)=[a,b]\subseteq\mathbb{R}\times\{0\}=\mathbb{R}
\]
and
\[
P_{0}(K_{\boldsymbol{\mathbf{i}}})=[c,d]\subseteq\mathbb{R}\times\{0\}=\mathbb{R}
\]
and both $[a,b]$ and $[c,d]$ are intervals of positive length because
$K$ is not contained in any line. Then either $[a,c]$ or $[d,b]$
is an interval of positive length. Let this interval of positive length
be $J$, let $B=\left(P_{0}^{-1}(J)\cap K\right)\setminus K_{\boldsymbol{\mathbf{i}}}$
and $A=K_{\boldsymbol{\mathbf{i}}}$. Then $B$ is compact by that
$\left\{ S_{i}\right\} _{i=1}^{m}$ satisfies the SSC. So $P_{0}\in IP(K)=IP(A)$,
$P_{0}\in IP(B)$, $A$ and $B$ are disjoint compact subsets of $K$,
$\left|P_{0}(A)\cap P_{0}(B)\right|=1$, $\mathrm{diam}(P_{0}(A))>0$,
$\mathrm{diam}(P_{0}(B))>0$, $P_{0}\in IP(A\cup B)$, $P_{0}(K\setminus(A\cup B))$
and the interior of the interval $P_{0}(A\cup B)$ are disjoint and
by Lemma \ref{lem: SS Almost disjoint proj} for every $P_{\theta}\in\prod_{2,1}$
we have that $\mathcal{H}^{1}\left(P_{\theta}(A)\cap P_{\theta}(B)\right)=0$.
By Lemma \ref{lem: SS density lem} for every open set $U$ that intersects
$B$ and for every $P_{\theta}\in IP(K)$ we have that $0<\mathcal{H}^{1}(P_{\theta}(B\cap U))$.
So all the conditions of Lemma \ref{lem: Key lemma} are satisfied
for $A,B\subseteq K$. Hence there exists $\delta>0$ such that $P_{0+\varphi}\notin IP(K)$
for every $\varphi\in(-\delta,\delta)\setminus\{0\}$. So $P_{0}\in IP(K)$
is isolated in $IP(K)$.$\hfill\square$

\begin{lem}
\label{lem:gorbe resz lem}Let $K\subseteq\mathbb{R}^{2}$ be such
that $\mathcal{H}^{1}(K)<\infty$. If there exists a non-degenerate
continuous curve $\Gamma_{0}$ such that $\Gamma_{0}\subseteq K$
then there exists a $1$-dimensional $C^{1}$ submanifold $\Gamma$
of $\mathbb{R}^{2}$ such that $\mathcal{H}^{1}(K\cap\Gamma)>0$.
\end{lem}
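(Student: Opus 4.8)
The plan is to deduce the statement from the classical rectifiability theory of sets of finite length. Parametrising $\Gamma_{0}$ by a compact interval and, if necessary, passing to a nondegenerate closed subarc, I may assume that $\Gamma_{0}$ is compact and connected with at least two distinct points. Fixing $x\in\Gamma_{0}$, the map $z\mapsto\left\Vert z-x\right\Vert$ is $1$-Lipschitz and, by connectedness, carries $\Gamma_{0}$ onto an interval containing $[0,\mathrm{diam}(\Gamma_{0})]$; since $\mathcal{H}^{1}$ does not increase under $1$-Lipschitz maps, this gives $\mathcal{H}^{1}(\Gamma_{0})\geq\mathrm{diam}(\Gamma_{0})>0$. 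On the other hand $\Gamma_{0}\subseteq K$ yields $\mathcal{H}^{1}(\Gamma_{0})\leq\mathcal{H}^{1}(K)<\infty$, so $0<\mathcal{H}^{1}(\Gamma_{0})<\infty$.

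Next I would invoke the classical theorem that a continuum (a compact connected set) of finite $\mathcal{H}^{1}$ measure is a rectifiable curve, i.e. the Lipschitz image of a compact interval; in particular $\Gamma_{0}$ is a $1$-rectifiable set. I would then apply the standard characterisation of rectifiability by $C^{1}$ submanifolds: every $1$-rectifiable subset of $\mathbb{R}^{2}$ is, up to an $\mathcal{H}^{1}$-null set, contained in a countable union $\bigcup_{j=1}^{\infty}\Gamma_{j}$ of $1$-dimensional $C^{1}$ submanifolds of $\mathbb{R}^{2}$. Thus $\mathcal{H}^{1}\left(\Gamma_{0}\setminus\bigcup_{j}\Gamma_{j}\right)=0$, while $\mathcal{H}^{1}(\Gamma_{0})>0$, so by countable subadditivity there is some index $j$ with $\mathcal{H}^{1}(\Gamma_{0}\cap\Gamma_{j})>0$. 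Setting $\Gamma=\Gamma_{j}$ and using $\Gamma_{0}\subseteq K$ gives $\mathcal{H}^{1}(K\cap\Gamma)\geq\mathcal{H}^{1}(\Gamma_{0}\cap\Gamma)>0$, which is the desired conclusion.

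The one genuinely non-routine ingredient is the passage from \emph{continuous} curve to \emph{rectifiable} set. Continuity alone is far too weak---a continuous image of an interval can fill a region of positive area and thus have infinite $\mathcal{H}^{1}$ measure---so rectifiability cannot follow from the curve structure by itself. The decisive hypothesis is the finiteness $\mathcal{H}^{1}(K)<\infty$, which upgrades the merely continuous continuum $\Gamma_{0}$ to a rectifiable curve via the continuum-rectifiability theorem; I expect this to be the main obstacle. Once rectifiability is secured, the reduction to countably many $C^{1}$ pieces and the positive-measure pigeonhole argument are entirely standard, so the remainder is routine bookkeeping with the measure $\mathcal{H}^{1}$.
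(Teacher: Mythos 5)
Your proof is correct, but it takes a genuinely different route from the paper. The paper's argument is projection-theoretic: since $\Gamma_{0}$ is connected and contains two distinct points $x\neq y$, the projection $P_{\theta}(\Gamma_{0})$ (hence $P_{\theta}(K)$) is an interval of positive length for every $\theta$ except possibly the one direction perpendicular to $[x,y]$; the Besicovitch--Federer projection theorem (Mattila, Theorem 18.1(2)) then forces $K$ not to be purely $1$-unrectifiable, i.e.\ some $C^{1}$ submanifold meets $K$ in positive $\mathcal{H}^{1}$-measure. You instead extract rectifiability directly from the curve: $0<\mathrm{diam}(\Gamma_{0})\leq\mathcal{H}^{1}(\Gamma_{0})\leq\mathcal{H}^{1}(K)<\infty$, the classical theorem that a continuum of finite $\mathcal{H}^{1}$-measure is a rectifiable curve, the standard $C^{1}$-cover characterisation of $1$-rectifiable sets, and a pigeonhole over the countable cover. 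Both arguments are sound and lean on one classical black box each (Besicovitch--Federer versus continuum rectifiability). Your version yields the slightly stronger localisation $\mathcal{H}^{1}(\Gamma_{0}\cap\Gamma)>0$ and never mentions projections; the paper's version is a two-line deduction that fits the projection-theoretic machinery already in use throughout the paper. One small point of care common to both: the conclusion is phrased via $C^{1}$ submanifolds, so whichever rectifiability statement you quote must be the $C^{1}$ (not merely Lipschitz-image) characterisation --- you do invoke the right one, and for $\mathcal{H}^{1}$-finite sets the two notions agree.
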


\begin{proof}
If $x,y\in\Gamma_{0}$, $x\neq y$ then $P_{\theta}(K)$ is an interval
of positive length for each $\theta$ such that $L_{\theta}$ is not
perpendicular to the line segment $[x,y]$. So the statement follows
from \cite[Theorem 18.1 (2)]{Mattilakonyv}.
\end{proof}

\noindent \textit{Proof of Theorem \ref{thm:SS int-proj OSC}.} As
in the proof of Theorem \ref{thm:SS int-proj SSC} it is enough to
show that for $P_{\theta}\in IP(K)$ there exists $\delta>0$ such
that $P_{\theta+\varphi}\notin IP(K)$ for every $\varphi\in(-\delta,\delta)\setminus\{0\}$
and without the loss of generality we can assume that $\theta=0$.
We proceed by showing that there are sets $A,B\subseteq K$ that satisfies
the assumptions of Lemma \ref{lem: Key lemma} otherwise $K$ would
contain a continuous curve which would contradict with the unrectifiability
of $K$.

Since each $S_{i}$ is a homothety for all $i\in\mathcal{I}$ it follows
that $IP(K_{\boldsymbol{\mathbf{i}}})=IP(S_{\boldsymbol{\mathbf{i}}}(K))=IP(K)$
for all $k\in\mathbb{N}$ and $\boldsymbol{\mathbf{i}}\in\mathcal{I}^{k}$.
Since $P_{0}\in IP(K)$ let
\[
P_{0}(K)=[a,b]\subseteq\mathbb{R}\times\{0\}=\mathbb{R}.
\]
So $[a,b]$ is an interval of positive length because $K$ is not
contained in any line.

We claim that there is a unique $w\in K$ such that $P_{0}(w)=a$.
Otherwise, assume for a contradiction that $w'$ is another point
of $K$ such that $P_{0}(w')=a$. Let $r_{d}=\left\Vert w-w'\right\Vert $
and $k\in\mathbb{N}$ such that $r_{max}^{k}\cdot\mathrm{diam}(K)<\frac{r_{d}}{2}$.
Then $\mathrm{diam}(K_{\boldsymbol{\mathbf{i}}})<r_{max}^{k}\cdot\mathrm{diam}(K)<\frac{r_{d}}{2}$
for every $\boldsymbol{\mathbf{i}}\in\mathcal{I}^{k}$. There exist
$\boldsymbol{\mathbf{i}},\boldsymbol{\mathbf{j}}\in\mathcal{I}^{k}$
such that $w\in K_{\boldsymbol{\mathbf{i}}}$ and $w'\in K_{\boldsymbol{\mathbf{j}}}$.
Since $\mathrm{diam}(K_{\boldsymbol{\mathbf{i}}})<\frac{r_{d}}{2}$,
$w\in K_{\boldsymbol{\mathbf{i}}}$ and $\mathrm{diam}(K_{\boldsymbol{\mathbf{j}}})<\frac{r_{d}}{2}$,
$w'\in K_{\boldsymbol{\mathbf{j}}}$ it follows that $K_{\boldsymbol{\mathbf{i}}}$
and $K_{\boldsymbol{\mathbf{j}}}$ are disjoint. Since $P_{0}\in IP(K_{\boldsymbol{\mathbf{i}}})$
and $P_{0}\in IP(K_{\boldsymbol{\mathbf{j}}})$ we have that for some
$c_{1},c_{2}>a$ the projections $P_{0}(K_{\boldsymbol{\mathbf{i}}})=[a,c_{1}]$
and $P_{0}(K_{\boldsymbol{\mathbf{j}}})=[a,c_{2}]$ are overlapping
intervals. On the other hand, this contradicts with Lemma \ref{lem: SS Almost disjoint proj}.
So there is a unique $w\in K$ such that $P_{0}(w)=a$. Similarly
there is a unique $z\in K$ such that $P_{0}(z)=b$.

So $P_{0}(K_{\boldsymbol{\mathbf{i}}})$ is an interval and for all
$\boldsymbol{\mathbf{i}}\in\mathcal{I}^{k}$ and for both endpoints
of the interval $P_{0}(K_{\boldsymbol{\mathbf{i}}})$ there is a unique
point of $K_{\boldsymbol{\mathbf{i}}}$ that projects onto that endpoint
of the interval $P_{0}(K_{\boldsymbol{\mathbf{i}}})$ because each
$S_{\boldsymbol{\mathbf{i}}}$ is a homothety. We say that a cylinder
set, $K_{\boldsymbol{\mathbf{i}}}$ for some $\boldsymbol{\mathbf{i}}\in\mathcal{I}^{k}$,
is a fitting piece if there exist unique $w_{\boldsymbol{\mathbf{i}}},z_{\boldsymbol{\mathbf{i}}}\in K$
such that $P_{0}(w_{\boldsymbol{\mathbf{i}}})$ and $P_{0}(z_{\boldsymbol{\mathbf{i}}})$
are the two endpoints of the interval $P_{0}(K_{\boldsymbol{\mathbf{i}}})$.
We claim that there exists $\boldsymbol{\mathbf{i}}\in\mathcal{I}^{k}$
for some $k\in\mathbb{N}$ such that $K_{\boldsymbol{\mathbf{i}}}$
is not a fitting piece. Assume for a contradiction that $K_{\boldsymbol{\mathbf{i}}}$
is a fitting piece for each $k\in\mathbb{N}$, $\boldsymbol{\mathbf{i}}\in\mathcal{I}^{k}$
and without the loss of generality we can assume that the $x$-coordinate
of $w_{\boldsymbol{\mathbf{i}}}$ is smaller than the $x$-coordinate
of $z_{\boldsymbol{\mathbf{i}}}$. Let $a=x_{0,k}<x_{1,k}<\ldots<x_{n_{k},k}=b$
such that $\left\{ x_{j,k}:j\in\left\{ 0,1,\ldots,n_{k}\right\} \right\} =\left\{ P_{0}(w_{\boldsymbol{\mathbf{i}}}):\boldsymbol{\mathbf{i}}\in\mathcal{I}^{k}\right\} \bigcup\left\{ P_{0}(z_{\boldsymbol{\mathbf{i}}}):\boldsymbol{\mathbf{i}}\in\mathcal{I}^{k}\right\} $.
For $k\in\mathbb{N}$ let $f_{k}:[a,b]\longrightarrow\mathbb{R}$
be the continuous function such that $w_{\boldsymbol{\mathbf{i}}},z_{\boldsymbol{\mathbf{i}}}\in graph(f_{k})$
for each $\boldsymbol{\mathbf{i}}\in\mathcal{I}^{k}$ and the restriction
of $f_{k}$ to the interval $[x_{j,k},x_{j+1,k}]$ is linear. If $P_{0}(w_{\boldsymbol{\mathbf{j}}})\in P_{0}(K_{\boldsymbol{\mathbf{i}}})$
for some $\boldsymbol{\mathbf{i}},\boldsymbol{\mathbf{j}}\in\mathcal{I}^{k}$
then $w_{\boldsymbol{\mathbf{j}}}\in K_{\boldsymbol{\mathbf{i}}}$
because $K_{\boldsymbol{\mathbf{j}}}$ is a fitting piece. Thus if
$P_{0}(K_{\boldsymbol{\mathbf{i}}})\cap P_{0}(K_{\boldsymbol{\mathbf{j}}})\neq\emptyset$
then the $2\cdot r_{max}^{k}\cdot\mathrm{diam}(K)$-neighbourhood
of $w_{\boldsymbol{\mathbf{i}}}$ contains $K_{\boldsymbol{\mathbf{j}}}$
and so also contains $f_{k}(P_{0}(K_{\boldsymbol{\mathbf{i}}}))$.
Hence $\left|f_{k}(x)-f_{k+l}(x)\right|\leq4\cdot r_{max}^{k}\cdot\mathrm{diam}(K)$
for all $x\in[a,b]$ and $l\in\mathbb{N}$. So the sequence $\left(f_{k}\right)_{=1}^{\infty}$
is uniformly convergent and $f(x):=\lim_{k\rightarrow\infty}f_{k}(x)$
is a continuous function. Then $graph(f)$ is a non-degenerate continuous
curve, that is contained in $K$. Hence by Lemma \ref{lem:gorbe resz lem}
there exists a $1$-dimensional $C^{1}$ submanifold $\Gamma$ of
$\mathbb{R}^{2}$ such that $\mathcal{H}^{1}(K\cap\Gamma)>0$. Thus
$K$ is contained in a line by Corollary \ref{cor: manifold intersection cor}
but this contradicts with the assumption of the theorem. So there
exists $\boldsymbol{\mathbf{i}}\in\mathcal{I}^{k}$ for some $k\in\mathbb{N}$
such that $K_{\boldsymbol{\mathbf{i}}}$ is not a fitting piece.

Let $K_{\boldsymbol{\mathbf{i}}}$ be a non-fitting piece for some
$\boldsymbol{\mathbf{i}}\in\mathcal{I}^{k}$. Then there exists $x\in K$
such that $x\notin K_{\boldsymbol{\mathbf{i}}}$ and $P(x)\in P_{0}(K_{\boldsymbol{\mathbf{i}}})$.
Because $K_{\boldsymbol{\mathbf{i}}}$ is compact and $x\notin K_{\boldsymbol{\mathbf{i}}}$
it follows that $\mathrm{dist}(x,K_{\boldsymbol{\mathbf{i}}})>0$.
Let $k_{2}\in\mathbb{N}$ such that $r_{max}^{k_{2}}\cdot\mathrm{diam}(K)<\mathrm{dist}(x,K_{\boldsymbol{\mathbf{i}}})$.
There exists $\boldsymbol{\mathbf{j}}\in\mathcal{I}^{k_{2}}$ such
that $x\in K_{\boldsymbol{\mathbf{j}}}$. It follows that $K_{\boldsymbol{\mathbf{i}}}\cap K_{\boldsymbol{\mathbf{j}}}=\emptyset$,
$P_{0}(K_{\boldsymbol{\mathbf{i}}})\cap P_{0}(K_{\boldsymbol{\mathbf{j}}})\neq\emptyset$
but by Lemma \ref{lem: SS Almost disjoint proj} $\left|P_{0}(K_{\boldsymbol{\mathbf{i}}})\cap P_{0}(K_{\boldsymbol{\mathbf{j}}})\right|=1$.
Let $A=K_{\boldsymbol{\mathbf{i}}}$ and $B=K_{\boldsymbol{\mathbf{j}}}$.
By checking, similarly as we did it in the proof of Theorem \ref{thm:SS int-proj SSC},
that all the conditions of Lemma \ref{lem: Key lemma} are satisfied
for $A,B\subseteq K$ it follows that there exists $\delta>0$ such
that $P_{0+\varphi}\notin IP(K)$ for every $\varphi\in(-\delta,\delta)\setminus\{0\}$.$\hfill\square$

\begin{rem}
Let $K\subseteq\mathbb{R}^{d}$. We can define interval projections
in higher dimension such that we call \textit{$\Pi_{M}$ an interval
projection of $K$} for a line $M\subseteq\mathbb{R}^{d}$ through
the origin, if $\Pi_{M}(K)$ is an interval with the $M=\mathbb{R}$
identification. For a set $K\subseteq\mathbb{R}^{d}$ we denote the
set of all interval projections of $K$ by $IP(K)$. We can generalize
Theorem \ref{thm:SS int-proj OSC} as follows:
\end{rem}

\textit{Let $\left\{ S_{i}\right\} _{i=1}^{m}$ be an SS-IFS in $\mathbb{R}^{d}$
($d\geq2$) with attractor $K$ such that each $S_{i}$ are homotheties
and $K$ is a $1$-set that is not contained in any affine hyperplane.
Then there are at most finitely many lines through the origin, such
that the orthogonal projection onto them are interval projections
of $K$.}

The assumption, that $K$ is not contained in any affine hyperplane,
is necessary. Let $K$ be a set contained in a hyperplane $H$, let
$v$ be the normal vector of $H$, let $\Pi_{M}$ be an interval projection
of $K$ for some line $M\subseteq H$, and let $x$ be a non-zero
vector in $M$. Let $M_{n}=\left\{ \lambda\cdot(x+\frac{1}{n}\cdot v):\lambda\in\mathbb{R}\right\} $.
Then $\Pi_{M_{n}}$ is an interval projection of $K$, hence $\left|IP(K)\right|=\infty$.

For an $l$-dimensional subspace $M\subseteq\mathbb{R}^{d}$ we denote
its orthogonal direct complement by $M^{\perp}$. For a vector $v\in\mathbb{R}^{d}$
let $v^{\perp}$ be the hyperplane through the origin that is orthogonal
to $v$. We denote the set of all lines in $\mathbb{R}^{d}$ through
the origin by $G_{d,1}$. \\

\noindent \textit{Sketch of the proof.} Similarly to the proof of
Lemma \ref{lem: Compact IP lem} one can show that $IP(K)$ is compact.
Thus it is enough to show that every element of $IP(K)$ is isolated.
We show it by induction on $d$. The statement holds for $d=2$ by
Theorem \ref{thm:SS int-proj OSC}. Let $d>2$ and $\Pi_{M}\in IP(K)$
for some line $M$. Just as in the proof of Theorem \ref{thm:SS int-proj OSC},
we can show that for an endpoint $a$ of the interval $\Pi_{M}(K)$
there exists a unique $w\in K$ such that $\Pi_{M}(w)=a$. Moreover,
one can show that there exists a closed convex cone $C\subseteq\mathbb{R}^{d}$
such that $w\in C$ is the apex, $K\subseteq C$ and $a\notin\Pi_{M}(C\setminus w)$.
With a similar argument to that in the proof of Theorem \ref{thm:SS int-proj OSC}
about fitting pieces, one can show that there exist $\boldsymbol{\mathbf{i}},\boldsymbol{\mathbf{j}}\in\bigcup_{k=1}^{\infty}\mathcal{I}^{k}$
such that $K_{\boldsymbol{\mathbf{i}}}\cap K_{\boldsymbol{\mathbf{j}}}=\emptyset$
and $\left|\Pi_{M}(K_{\boldsymbol{\mathbf{i}}})\cap\Pi_{M}(K_{\boldsymbol{\mathbf{j}}})\right|=1$.
Let $u\in K_{\boldsymbol{\mathbf{i}}}$ and $v\in K_{\boldsymbol{\mathbf{j}}}$
such that $\Pi_{M}(u)=\Pi_{M}(v)$. Let $N$ be the $2$-dimensional
linear subspace that contains $M$ and the vector $u-v$ and identify
$N$ with $\mathbb{R}^{2}$ such that $M$ is the $x$-axis and $u-v$
is parallel to the $y$-axis. Using similar ideas to those that were
used to prove Lemma \ref{lem: Key lemma} one can show that there
exists a neighbourhood $U$ of $M$ in $G_{d,1}$ such that if $M_{2}\in U$
and $N\cap M_{2}^{\perp}$ is not parallel to $u-v$ then $\Pi_{M_{2}}$
is not an interval projection of $K$ (we note that in this argument
we use the fact that $K\subseteq C$ and $a\notin\Pi_{M}(C\setminus w)$).
To finish the proof we need to show that there are only finitely many
$M_{2}\in IP(K)$ such that $N\cap M_{2}^{\perp}$ is parallel to
$u-v$. Notice that if $N\cap M_{2}^{\perp}$ is parallel to $u-v$
then $M_{2}\subseteq(u-v)^{\perp}$ and so $\Pi_{M_{2}}=\Pi_{M_{2}}\circ\Pi_{(u-v)^{\perp}}$.
Hence if $\Pi_{M_{2}}\in IP(K)$ then $\Pi_{M_{2}}\in IP(\Pi_{(u-v)^{\perp}}(K))$
in $(u-v)^{\perp}=\mathbb{R}^{d-1}$. However, by the inductive assumption
$IP(\Pi_{(u-v)^{\perp}}(K))$ is a finite set because $\Pi_{(u-v)^{\perp}}(K)$
is a self-similar set since each $S_{i}$ is a homothety.$\hfill\square$

\section{Examples\label{sec:Examples}}

For an SS-IFS $\left\{ S_{i}\right\} _{i=1}^{m}$ with attractor $K$
we call the unique solution $s$ of the equation
\begin{equation}
\sum_{i=1}^{m}r_{i}^{s}=1\label{eq: S-dim sum}
\end{equation}
the \textit{similarity dimension} of the SS-IFS. A straightforward
covering argument shows that $\dim_{H}K\leq s$ and $\mathcal{H}^{s}(K)<\infty$,
see for example \cite[5.1 Prop(4)]{Hutchinson}. Let $0<r<1$ be such
that $r_{i}\leq r$ for every $i\in\mathcal{I}$. Then $1=\sum_{i=1}^{m}r_{i}^{s}\leq m\cdot r^{s}$
and hence
\[
\dim_{H}K\leq s\leq\log(m)/\log(1/r).
\]
In this section the dimension estimation of the self-similar sets
will all be based on this formula, hence we will use it without any
reference.

If there exists a compact, convex set $F$ of nonempty interior such
that $S_{i}(F)\subseteq F$ for every $i\in\mathcal{I}$ and we define
$F_{k}:=\bigcup_{\boldsymbol{\mathbf{i}}\in\mathcal{I}^{k}}S_{\boldsymbol{\mathbf{i}}}(F)$
then $F\supseteq F_{1}\supseteq F_{2}\supseteq\dots$ and since $K$
is the unique compact attractor of the SS-IFS it follows that $K=\bigcap_{k=1}^{\infty}F_{k}$.
Assume furthermore, that $S_{i}$ is a homothety for every $i\in\mathcal{I}$
and for some $\theta\in\mathbb{R}$ every line $L$ that is parallel
to $L_{\frac{\pi}{2}+\theta}$ and $L\cap F\neq\emptyset$ we have
that $L\cap F_{1}\neq\emptyset$. Then $L\cap F_{k}\neq\emptyset$
for every $k\in\mathbb{N}$ and so $L\cap K\neq\emptyset$. Hence
$P_{\theta}(K)=P_{\theta}(F)$ is an interval. It also implies that
$\dim_{H}K\geq\dim_{H}P_{\theta}(K)=1$ and $\mathcal{H}^{1}(K)\geq\mathcal{H}^{1}(P_{\theta}(K))>0$
because orthogonal projection does not increase the Hausdorff dimension
and measure. If for some $\theta\in\mathbb{R}$ we can find such $F$
then we call $F$ a \textit{$\theta$-witness of interval projection}
for the SS-IFS or shortly just say $F$ is a \textit{witness for $M=L_{\theta}$}.
\begin{example}
\label{exa:1dim-sierp}There exists a self-similar $1$-set with three
interval projections such that the projection intervals have the same
length. The \textit{$1$-dimensional Sierpinski triangle} is the attractor
of the SS-IFS that contains three homotheties which map an equilateral
triangle into itself fixing the corners with similarity ratio $1/3$.
If $M$ is a line that contains a side $[a,b]$ of the equilateral
triangle then $\Pi_{M}(K)=[a,b]$ because the equilateral triangle
is a witness for $M$. Hence $IP(K)$ contains at least three projections.
It is easy to show that $IP(K)$ contains exactly three projections.
See Figure 5.

\includegraphics[scale=0.33]{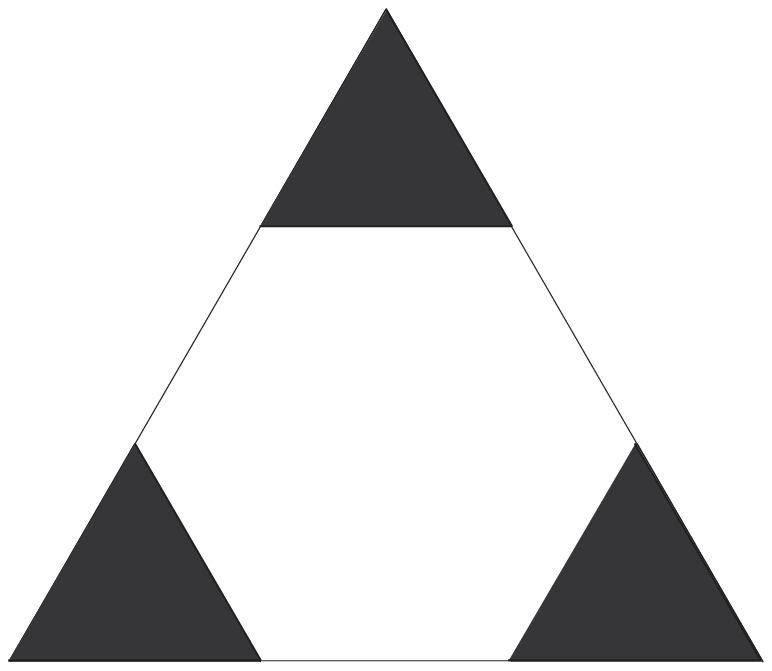}~~~~~~~~~~\includegraphics[scale=0.33]{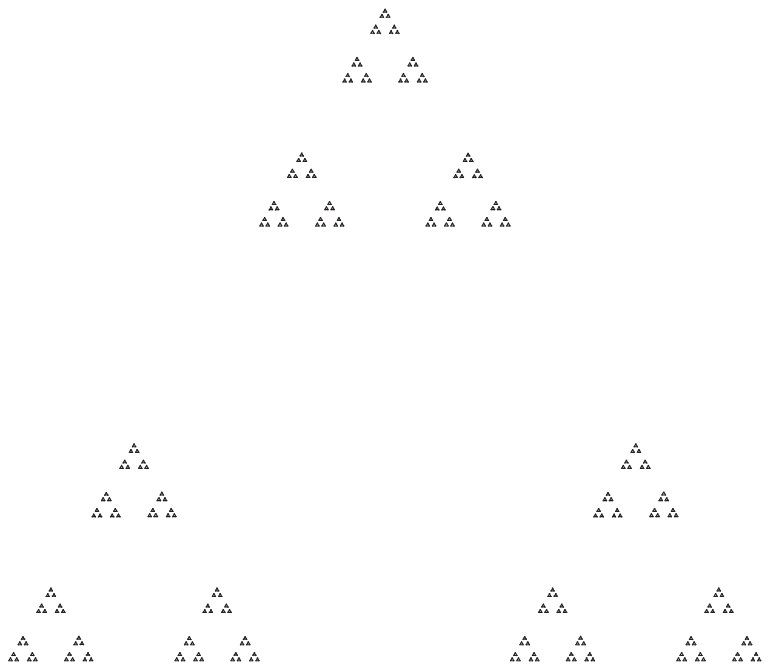}~~~~~~~~~~~~~~\includegraphics[scale=0.23]{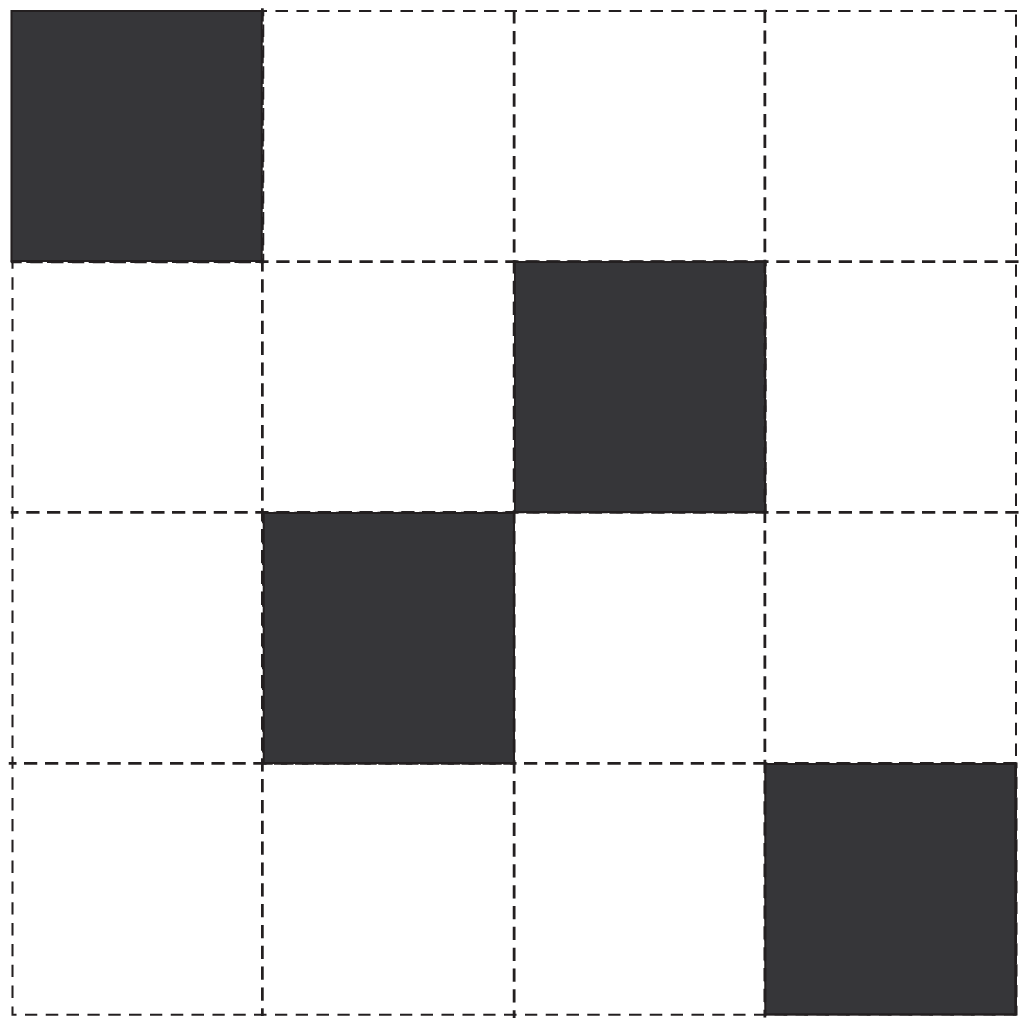}~~~~~~~~~~~~~~~\includegraphics[scale=0.23]{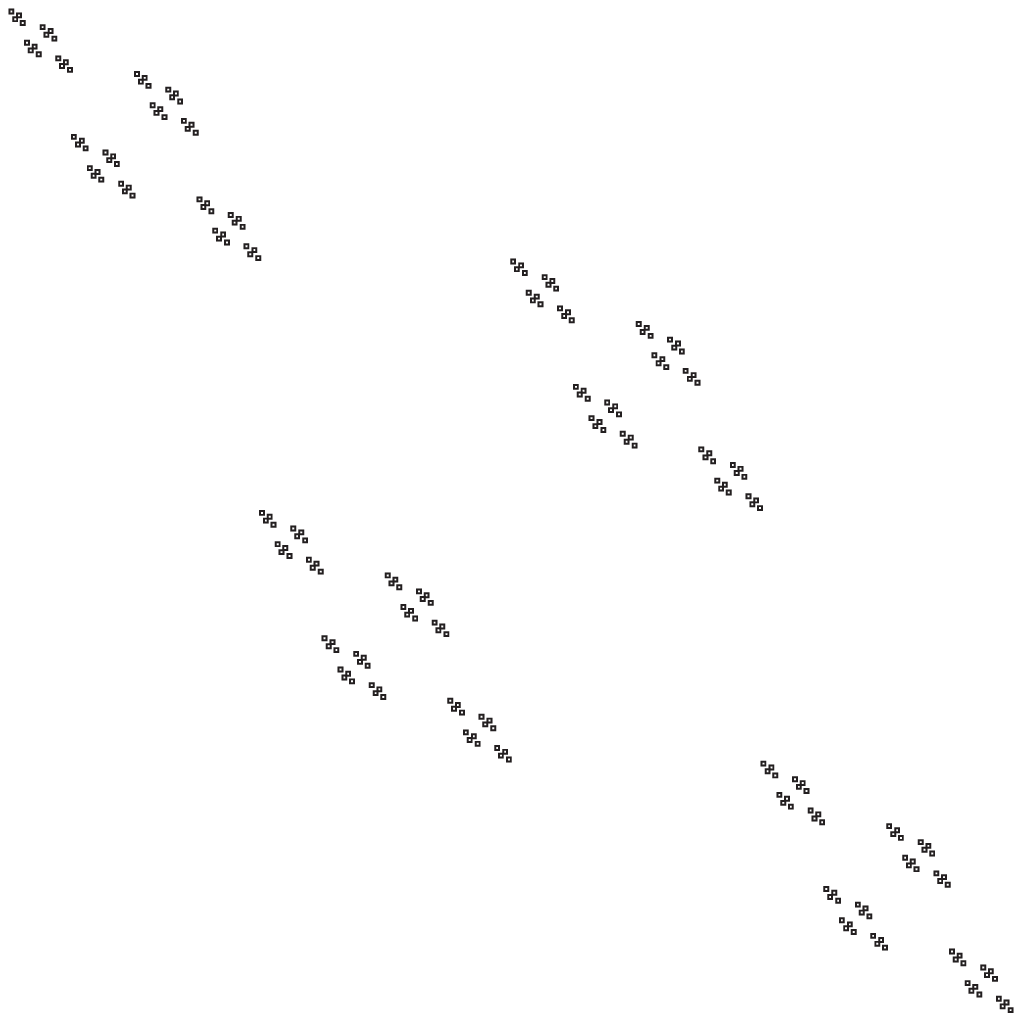}

~~~~~~~~~~~~~~~~~~~~~~Figure 5.~~~~~~~~~~~~~~~~~~~~~~~~~~~~~~~~~~~~~~~~~~~~~~~~~~~~~~~~~~~~~Figure
6.
\end{example}

\begin{example}
\label{exa:negyzet-ip}There exists a self-similar $1$-set that has
four interval projections. We take four homotheties of similarity
ratio $1/4$ that map the unit square into itself as it is shown on
Figure 6. The projection of the attractor on both the $x$- and $y$-axes
are intervals of length $1$ because the unit square is witness for
those. Let $F$ be the rhombus that is the convex hull of the fixed
points of the homotheties. Then the homotheties map $F$ into itself
fixing the corners. Again $F$ is a witness for the coordinate axes.
However, $F$ is a witness for two further lines, see Figure 7 and
Figure 8. Checking on $F_{1}$ it is not hard to show that $IP(K)$
consist of exactly four lines.

~~~~~~~~~~~~~~\includegraphics[scale=0.33]{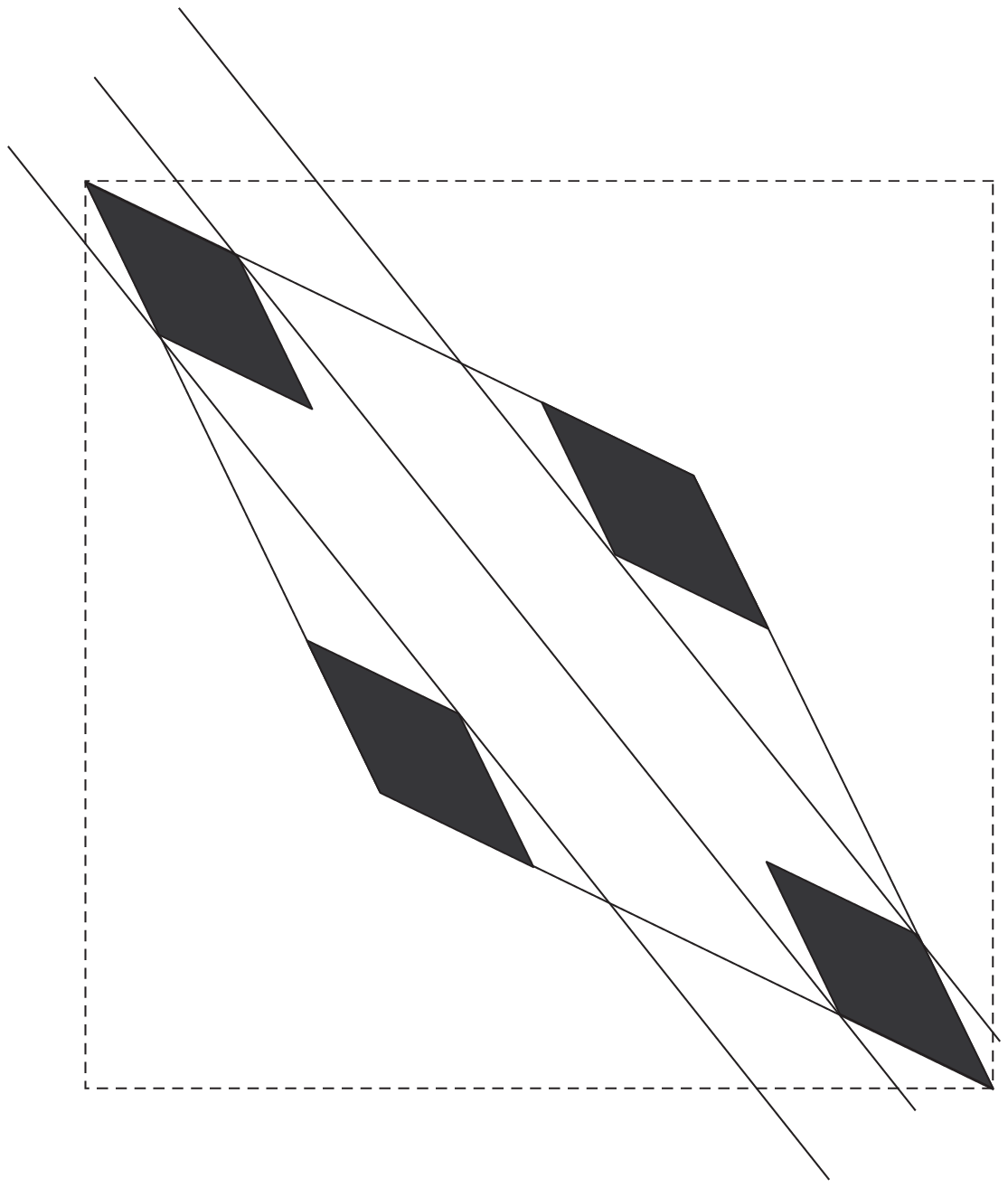}~~~~~~~~~~~~~~~~~~~~~~~~~~\includegraphics[scale=0.33]{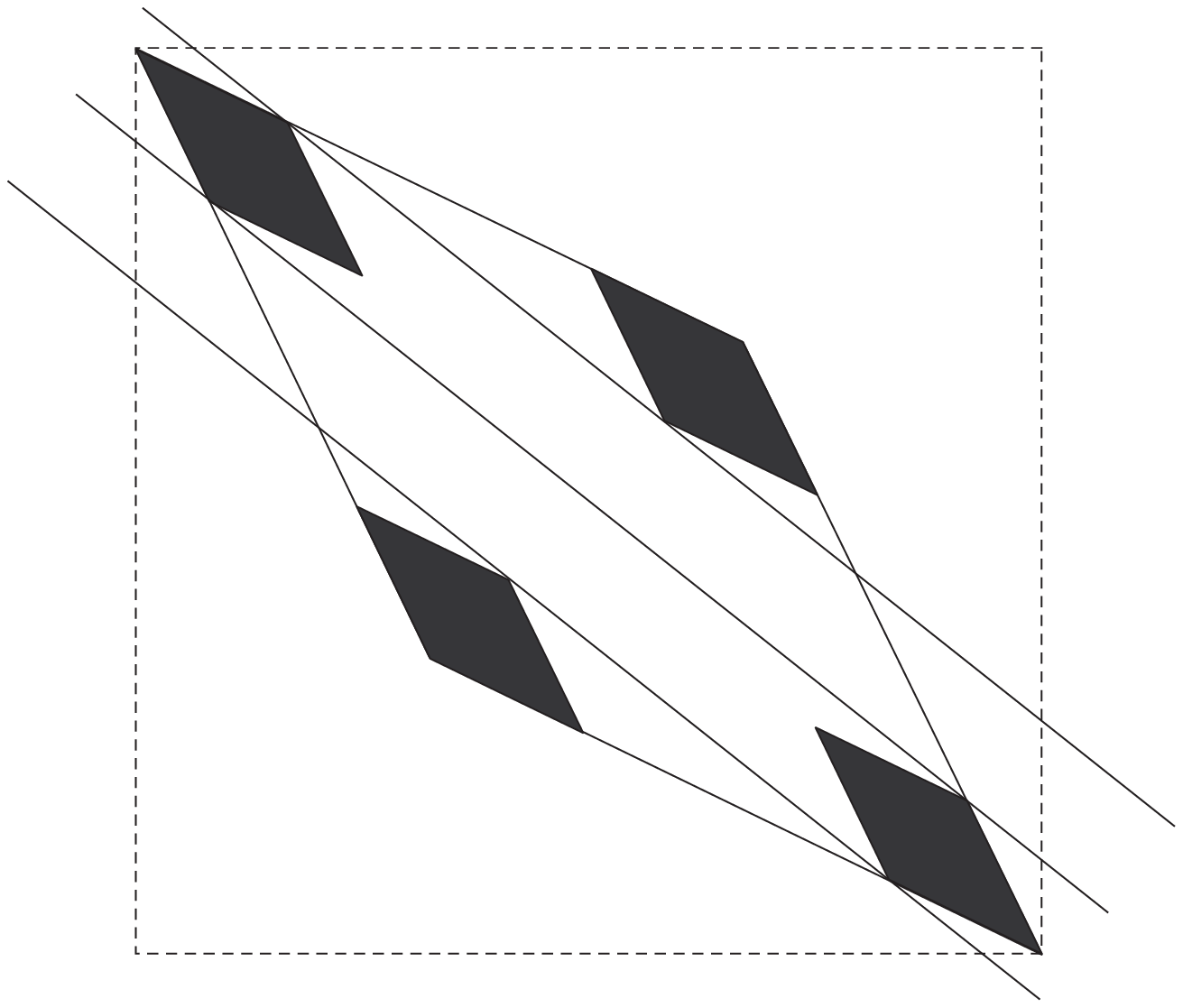}

~~~~~~~~~~~~~~~~~~~~~~~~~~Figure 7.~~~~~~~~~~~~~~~~~~~~~~~~~~~~~~~~~~~~~~~~~~~~~~~~~~~Figure
8.
\end{example}

\begin{rem}
In example \ref{exa:negyzet-ip} not all the four projection intervals
have the same length. One can easily verify it from that $F$ is a
witness for the projections. However, it is also easy to check that
$\int x\cdot y\mathrm{d}\mu(x,y)<0$ when $p$ is the centre of mass
of $\mathcal{H}^{1}\vert_{K}$ and $\mu=\mathcal{H}^{1}\vert_{K-p}$.
Hence the intervals could not be of same length by Theorem \ref{thm:SS moment of inertia}.
\end{rem}

\begin{example}
\label{exa:four corner}There exists a self-similar $1$-set with
four interval projections such that the projection intervals have
the same length. The \textit{$1$-dimensional four corner set} is
the attractor of the SS-IFS that contains four homotheties which map
the unit square into itself fixing the corners with similarity ratio
$1/4$. We indicate on Figure 9 what the interval projections are.
One can think of Example \ref{exa:negyzet-ip} as an affine image
of the $1$-dimensional four corner set.
\end{example}

\includegraphics[scale=0.246]{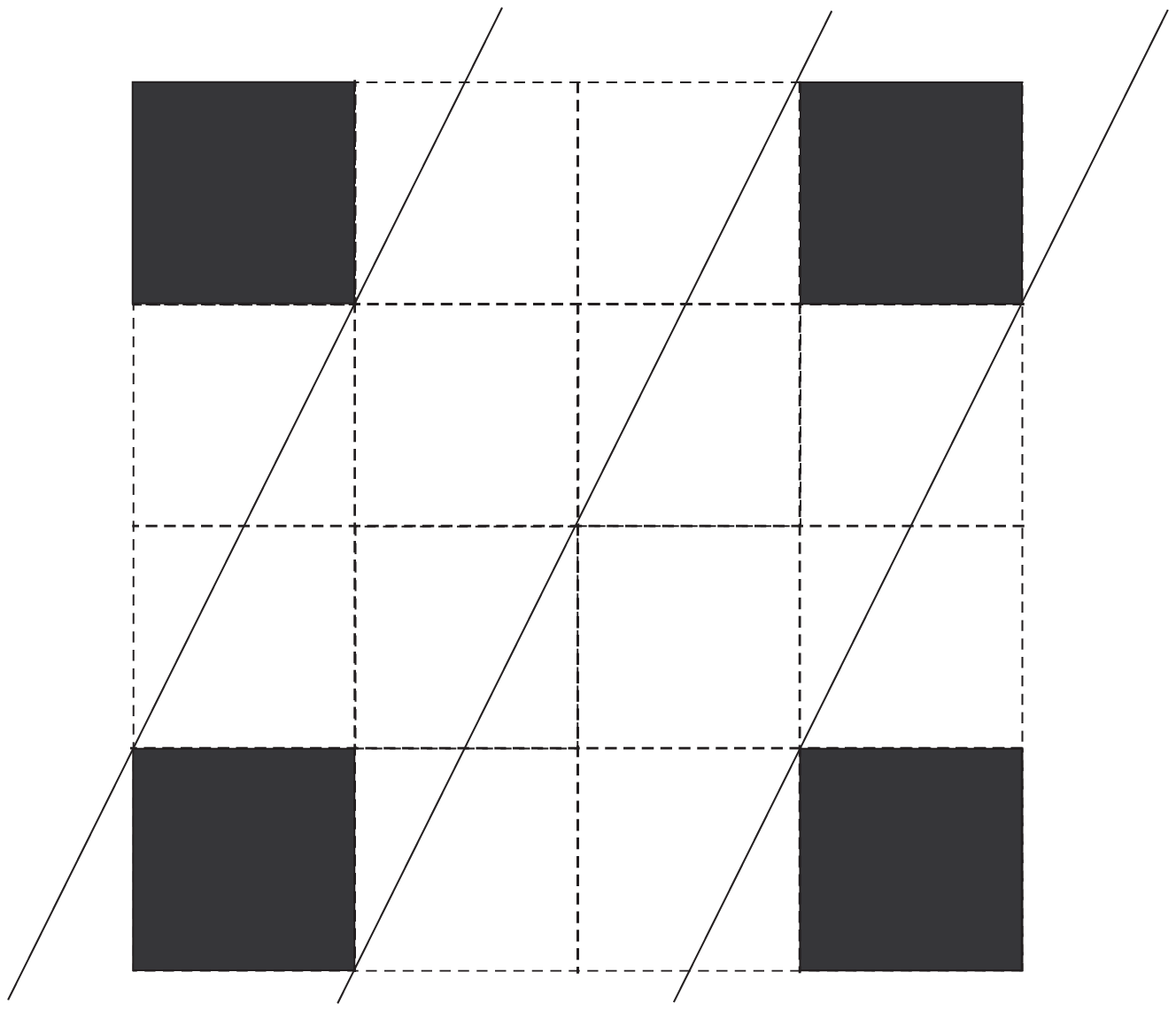}~~~~~~~\includegraphics[scale=0.246]{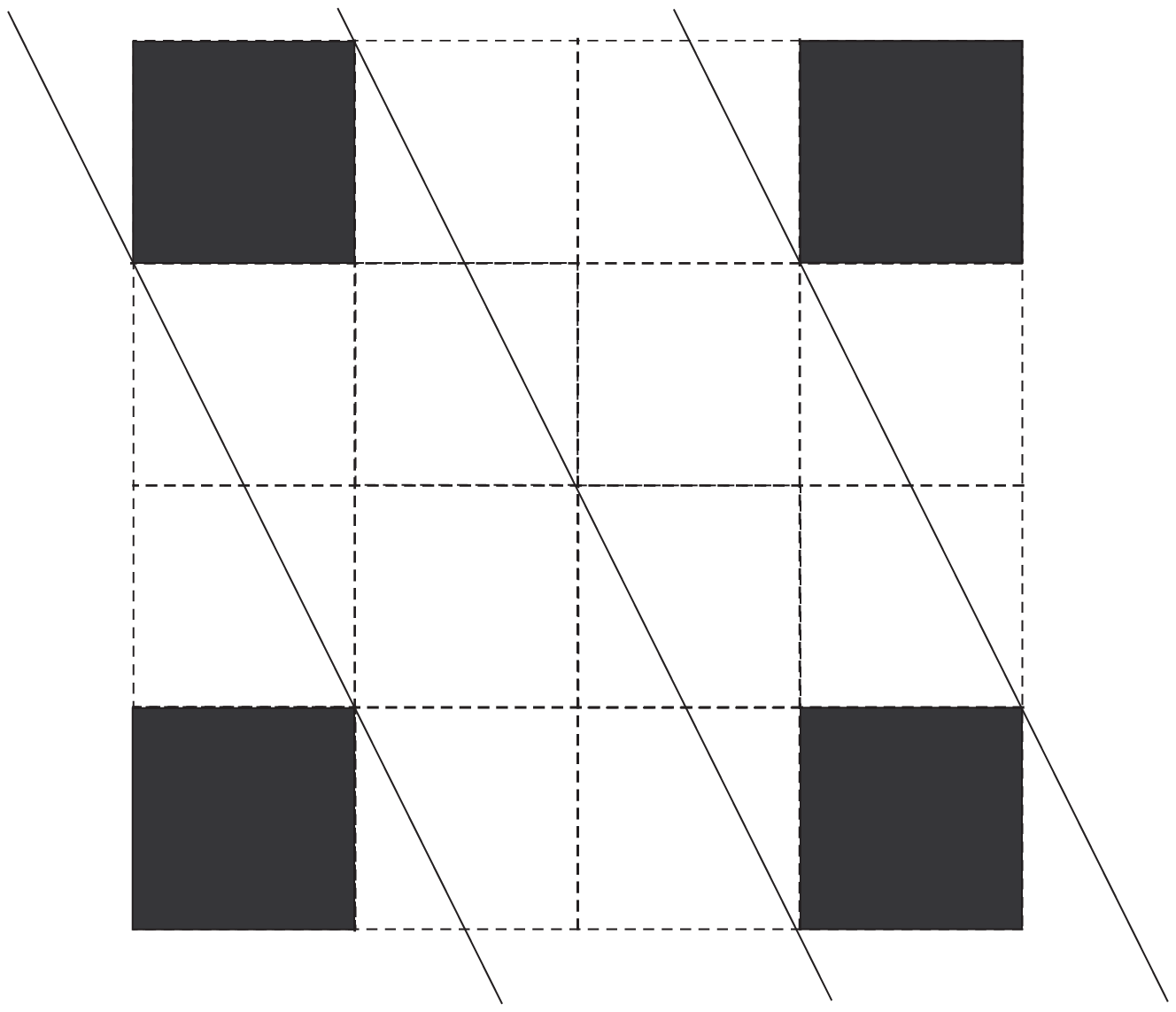}~~~~~~\includegraphics[scale=0.246]{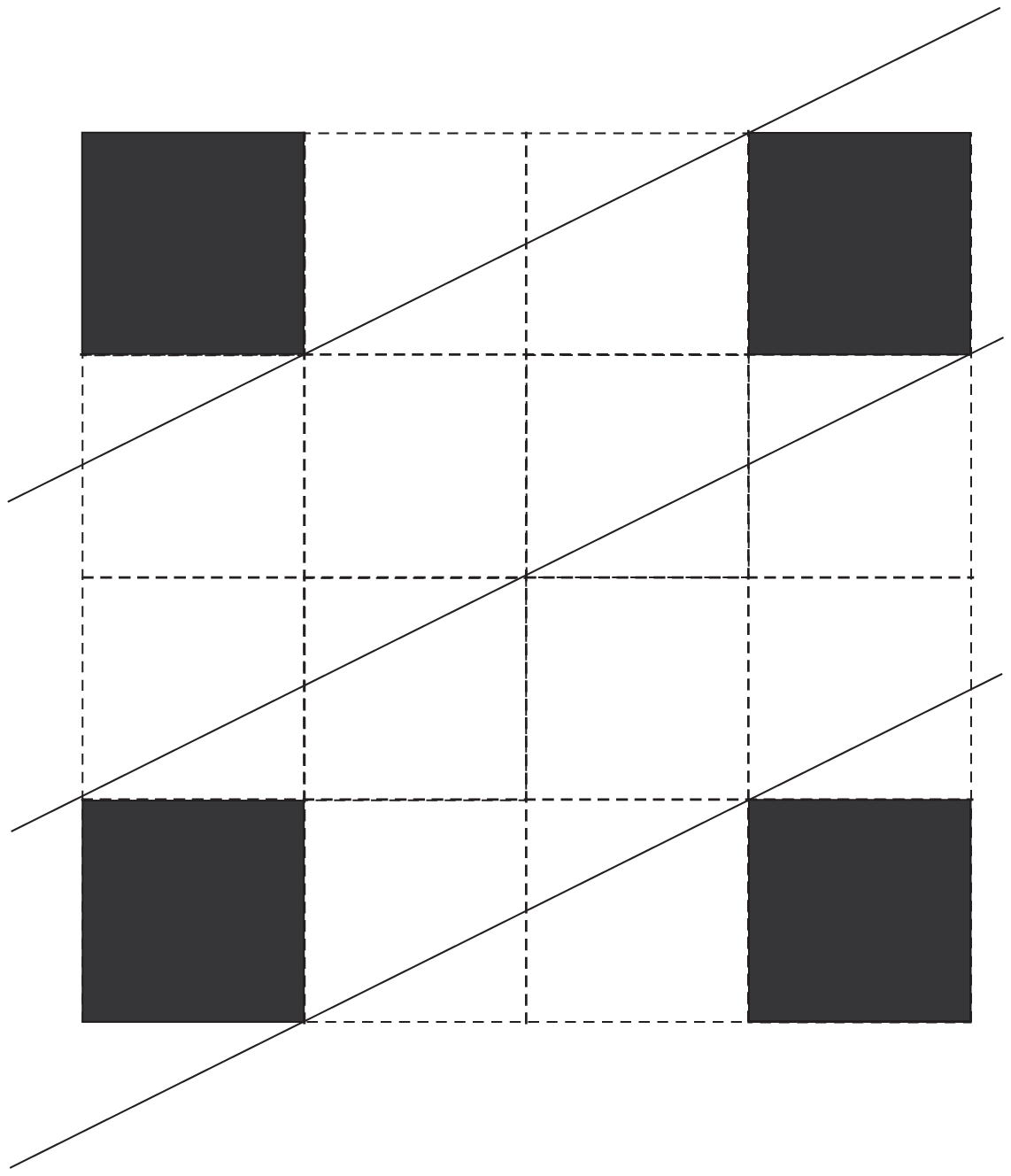}~~~~~~\includegraphics[scale=0.246]{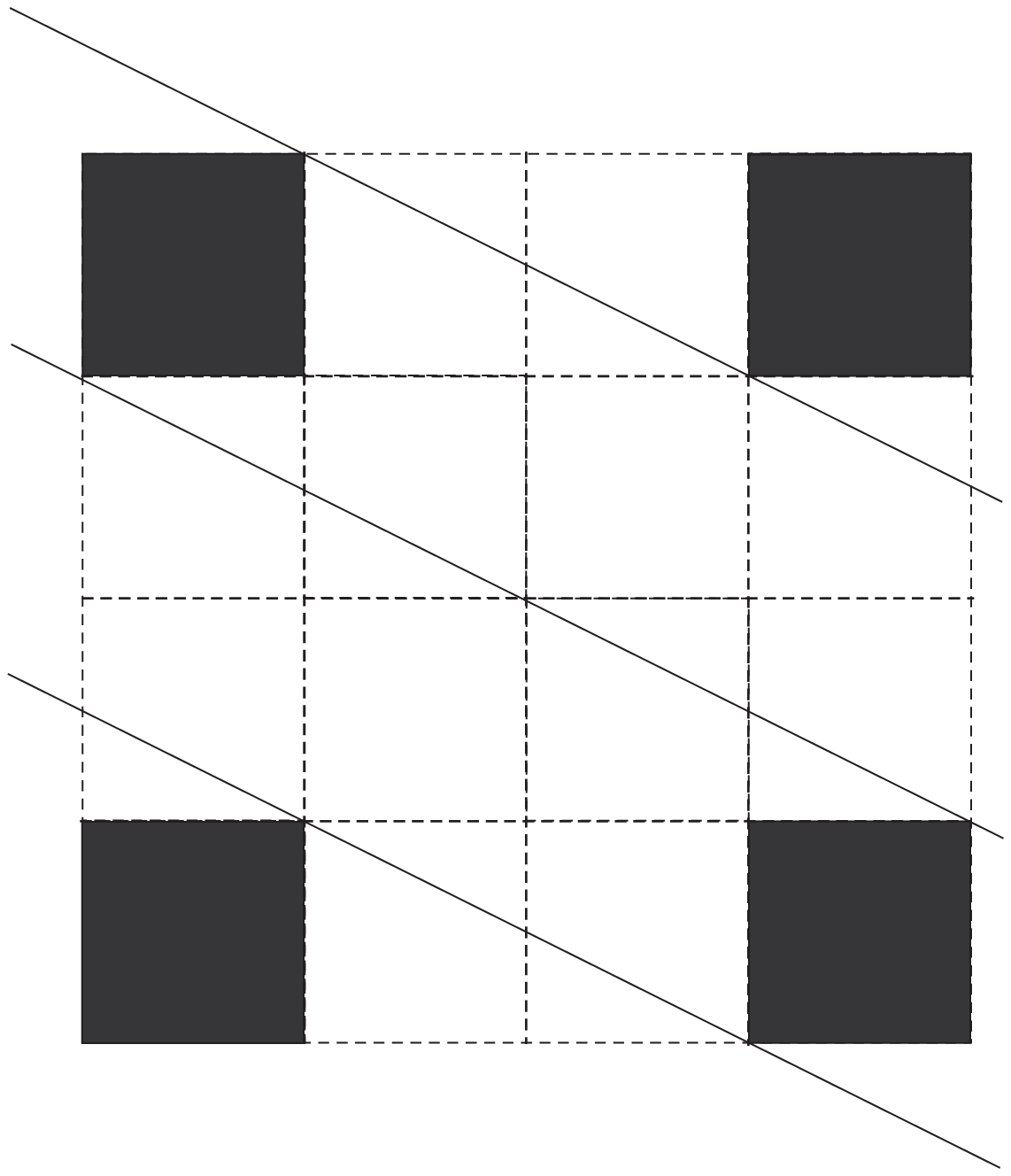}

~~~~~~~~~~~~~~~~~~~~~~~~~~~~~~~~~~~~~~~~~~~~~~~~~~~~~~~~~~~~~~Figure
9.

In the definition of a witness for a line $M=L_{\theta}$ we required
the similarities to be homtheties. If the similarities are not all
homotheties then we need to understand the intersection of $F$ and
$F_{1}$ not only with lines that are parallel to $L_{\pi/2+\theta}$
but also for lines that are parallel to $T(L_{\pi/2+\theta})$ for
every $T\in\mathcal{T}$. If we want to show that every projection
is an interval then we can define the witness the following way. Let
$F$ be a convex compact set with non-empty interior such that $S_{i}(F)\subseteq F$
for every $i\in\mathcal{I}$. Then $F\supseteq F_{1}\supseteq F_{2}\supseteq\dots$
and the attractor $K=\cap_{k=1}^{\infty}F_{k}$. Assume that every
line $L$ that intersects $F$ also intersects $F_{1}$. It follows
that every line $L$ that intersects $F$ also intersects $F_{k}$
for every $k\in\mathbb{N}$ and so $L$ intersects $K$. Hence every
projection is an interval. If we can find such a set $F$ then we
say that \textit{$\ensuremath{F}$ is a witness for every line}. Note
that for this definition we did not assume that the similarities are
homotheties.
\begin{example}
\label{Ex: Minen-vet-int}For every $\varepsilon>0$ there exists
a totally disconnected self-similar set $K$ of Hausdorff dimension
less then $1+\varepsilon$ such that every projection is an interval
projection of $K$. Rather than giving a complicated explicit description
of the SS-IFS we suggest the maps by drawing pictures. First consider
the SS-IFS when the similarities map the unit square into itself by
five maps as shown on Figure 10. It satisfies the SSC and the unit
square is a witness for every line. Hence every projection is an interval.

To decrease the Hausdorff dimension dimension we take more maps of
smaller similarity ratio $r$. We consider a sequence of SS-IFS, again
satisfying the SSC, when the similarities map the unit square into
itself such that images are close to the diagonals as shown on Figure
11. The number of maps that the SS-IFS consist of is $O(1/r)$ as
$r$ goes to $0$. Thus the similarity dimension approaches $1$ as
$r$ goes to $0$. Again the unit square is a witness for every line.

In the previous approach the similarities are not homotheties. We
can also give examples where every similarity is a homothety. Consider
the sequence of SS-IFS, now only satisfying the `open set condition',
when the similarities map the unit square into itself such that the
images touch the sides of the unit square as shown on Figure 12. Again
the number of maps that the SS-IFS consist of is $O(1/r)$ and the
unit square is a witness for every line.

We note that one could construct an example when both the SSC is satisfied
and every similarity is a homothety. However, the constructions is
a little bit more complicated to present.
\end{example}

\includegraphics[scale=0.265]{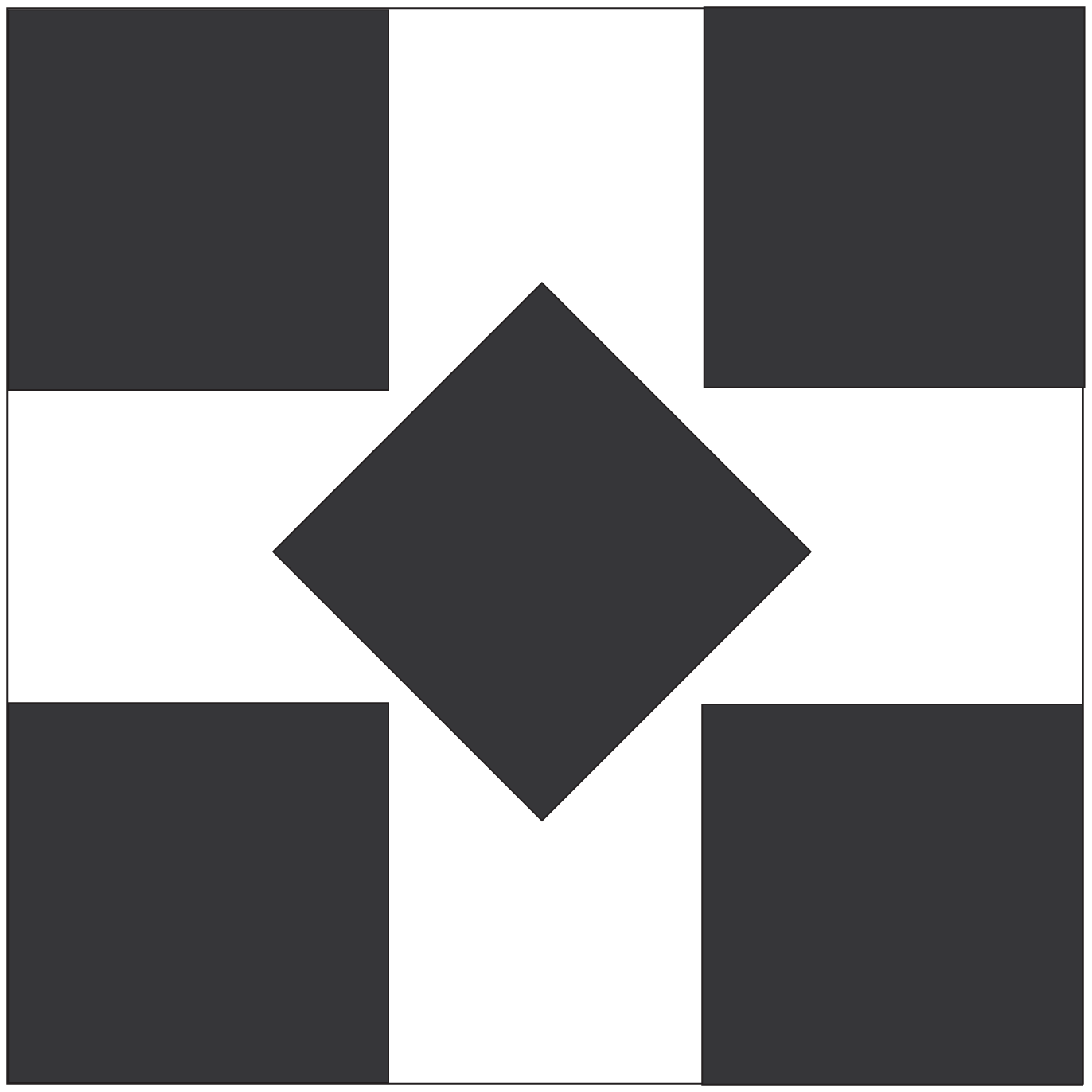}~~~~~~~~~~~~~~~~\includegraphics[scale=0.22]{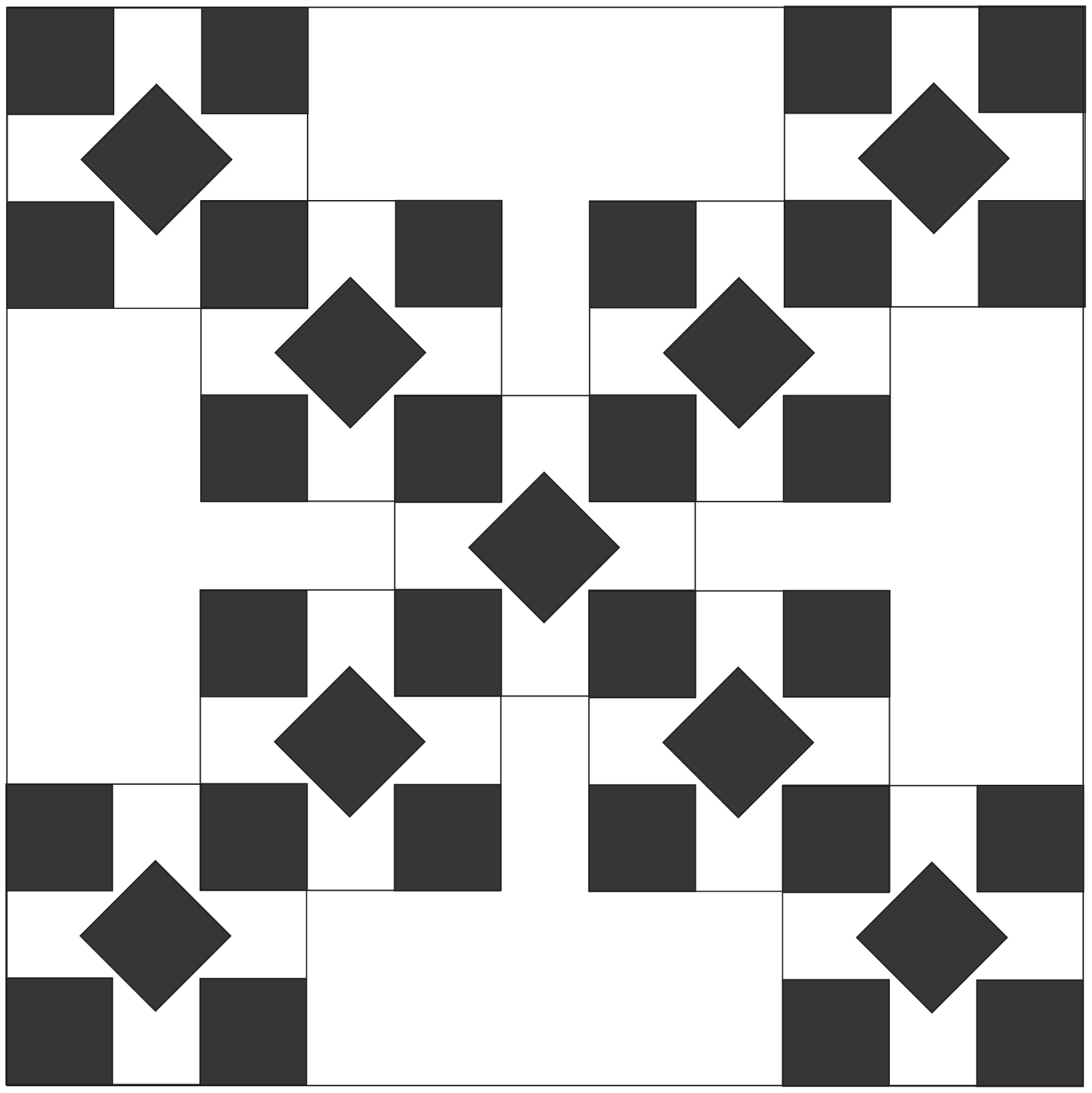}~~~~~~~~~~~~~~~~\includegraphics[scale=0.25]{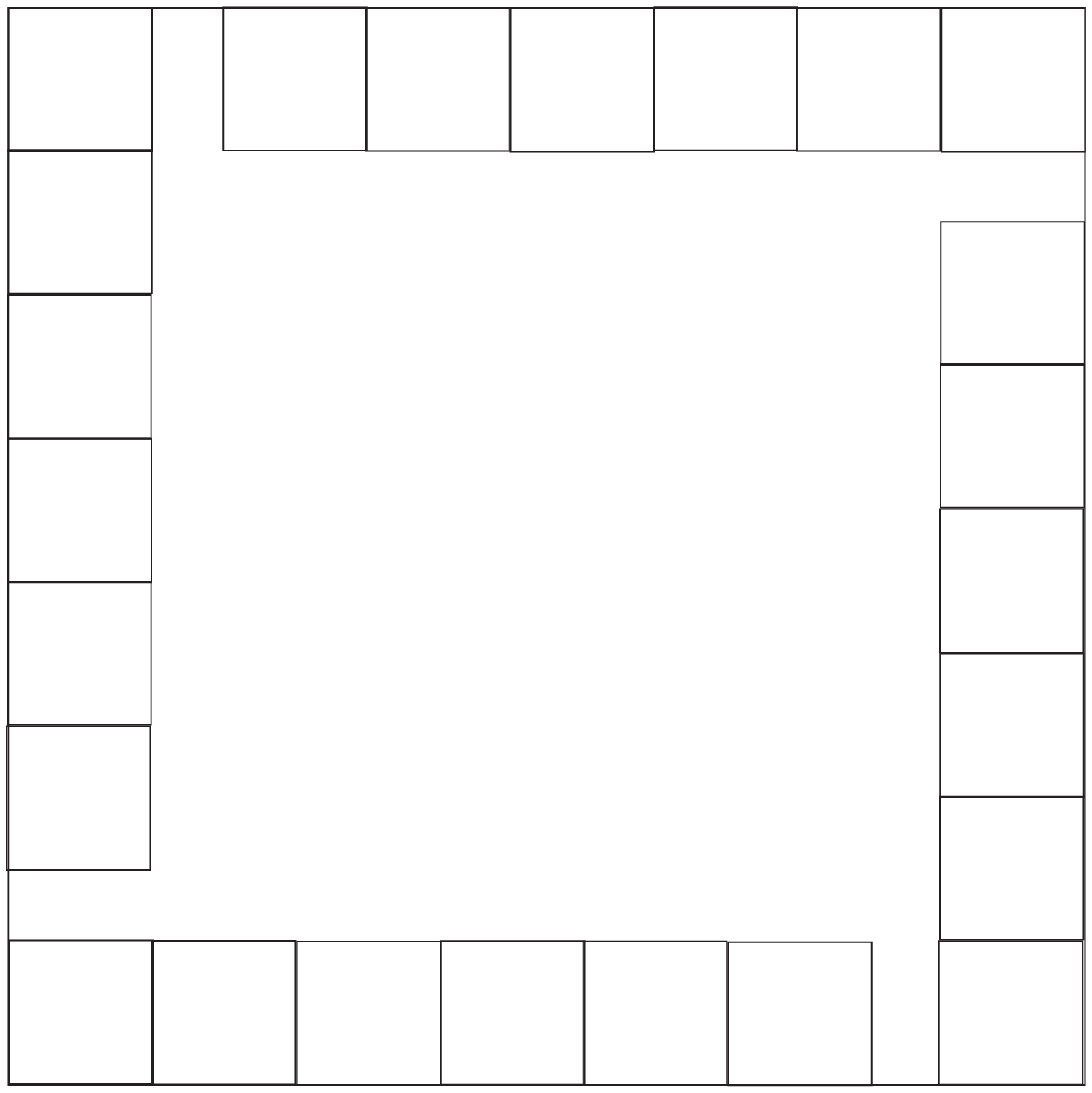}

~~~~~~~~~~Figure 10.~~~~~~~~~~~~~~~~~~~~~~~~~~~~~~~~~~~Figure
11.~~~~~~~~~~~~~~~~~~~~~~~~~~~~~~~~~~~Figure
12.
\begin{example}
\label{1dimfullip}There exists a totally disconnected, compact set
of Hausdorff dimension $1$ such that every projections is an interval
projection. For every $n\in\mathbb{N}$ let $\mathcal{I}_{n}$ be
an SS-IFS mentioned in Example \ref{Ex: Minen-vet-int} with the Hausdorff
dimension of the attractor being less than $1+1/n$. Let $F$ be the
unite square and
\[
F_{n}:=\bigcup\left\{ S_{i_{1}}\circ\ldots\circ S_{i_{n}}(F):i_{k}\in\mathcal{I}_{k},k=1,\dots,n\right\} .
\]
Then $F_{1}\supseteq F_{2}\supseteq\ldots$ and let $K=\bigcap_{n=1}^{\infty}F_{n}$
. By a standard covering argument it is easy to show that $\mathcal{H}^{1+\varepsilon}(K)<\infty$
for every $\varepsilon>0$ and hence $\dim_{H}K\leq1$. Via an argument
similar to the witness of projections one can show that every projection
is an interval projection.
\end{example}

\begin{example}
\label{exa:christmas chaain}There exists a totally disconnected self-similar
set $K$ in $\mathbb{R}^{3}$ such that the projection of $K$ onto
every $2$-dimensional plane is path connected. Note that if $M\subseteq\mathbb{R}^{3}$
is a $2$-dimensional plane, $L\subseteq M$ is a line and $\Pi_{M}(K)$
is connected then $\Pi_{L}(K)$ is an interval because $\Pi_{L}\circ\Pi_{M}=\Pi_{L}$.

In $\mathbb{R}^{3}=\left\{ (x,y,z):x,y,z\in\mathbb{R}\right\} $ let
$C=\left\{ (x,y,0):x^{2}+y^{2}=1\right\} $ be the unit circle and
for the sake of this example we say that the points $(1,0,0)$ and
$(-1,0,0)$ are the end points of $C$. Let $q>0$ be `very small'
(it will be clear from the construction how small) and let $F=\left\{ p\in\mathbb{R}^{3}:\mathrm{dist}(C,p)\leq q\right\} $
be the solid ring. Consider the SS-IFS containing similarities that
map $F$ into itself such that the image of the end points of $C$
are mapped into $C$ for every similarity and the union of the images,
that is $F_{1}$, form a chain inside $F$, like a necklace, where
the images of $F$ are the links. If $q$ is small enough we can do
this such that the images, i.e. the links, are disjoint. Hence the
strong separation condition is satisfied and so the attractor $K$
is totally disconnected. Fix a plane $M$. By taking a sufficient
uniformly convergent sequence $f_{n}$ of curves in $\Pi_{M}(F_{n})$
one can show that $\Pi_{M}(K)$ is path connected. As in Example \ref{Ex: Minen-vet-int}
we can take the similarity ratio $r$ to be very small but still have
at most $O(1/r)$ many similarity in the SS-IFS. Hence we can construct
$K$ to have Hausdorff dimension arbitrarily close to $1$.
\end{example}

\begin{center}
$\mathbf{Acknowledgements}$
\par\end{center}

The author was partially supported by an ERC grant (grant number 306494).
The author would like to thank Kenneth Falconer for pointing out the
connection between the integral of projections and the moment of inertia.
The results of the present paper emerged from the author`s master
thesis research. The author would like to thank M\'arton Elekes for
the help in writing the master thesis.

\end{document}